\newcommand{\backvec}[1]{\reflectbox{$\vec{\reflectbox{\!$#1$}}$}}
\patchcmd{\chapter}{\if@openright\cleardoublepage\else\clearpage\fi}{}{}{}
\theoremstyle{definition}
\title{Algebraic Topology Without Open Sets: A Net Approach to Homotopy Theory in Limit Spaces}
\author{Rodrigo Santos Monteiro }
\date{\today}
\newcommand{\addsymbol}[1]{\hfill \textcolor{gray}{#1}}
\newtheorem{definition}{Definition}[section]
\newtheorem{theorem}{Theorem}[section]
\newtheorem{lemma}{Lemma}[section]
\newtheorem{proposition}{Proposition}[section]
\newtheorem{example}{Example}[section]
\newtheorem{corollary}{Corollary}[section]
\newtheorem{remark}{Remark}[section]
\newmdenv[
linecolor=blue, 
linewidth=3pt, 
leftline=true, 
rightline=false, 
topline=false, 
bottomline=false,
innertopmargin=0.5\baselineskip, 
innerbottommargin=0.5\baselineskip
]{defstyle}
\newmdenv[
linecolor=red, 
linewidth=3pt, 
leftline=true, 
rightline=false, 
topline=false, 
bottomline=false,
innertopmargin=0.5\baselineskip, 
innerbottommargin=0.5\baselineskip
]{propstyle}
\newmdenv[
linecolor=orange, 
linewidth=3pt, 
leftline=true, 
rightline=false, 
topline=false, 
bottomline=false,
innertopmargin=0.5\baselineskip, 
innerbottommargin=0.5\baselineskip
]{theostyle}
\newmdenv[
linecolor=gray, 
linewidth=3pt, 
leftline=true, 
rightline=false, 
topline=false, 
bottomline=false,
innertopmargin=0.5\baselineskip, 
innerbottommargin=0.5\baselineskip
]{lemstyle}
\newmdenv[
linecolor=green, 
linewidth=3pt, 
leftline=true, 
rightline=false, 
topline=false, 
bottomline=false,
innertopmargin=0.5\baselineskip, 
innerbottommargin=0.5\baselineskip
]{corstyle}
\let\olddefinition\definition
\renewenvironment{definition}{\begin{defstyle}\olddefinition}{\end{defstyle}}
\let\oldproposition\proposition
\renewenvironment{proposition}{\begin{propstyle}\oldproposition}{\end{propstyle}}
\let\oldtheorem\theorem
\renewenvironment{theorem}{\begin{theostyle}\oldtheorem}{\end{theostyle}}
\let\oldlemma\lemma
\renewenvironment{lemma}{\begin{lemstyle}\oldlemma}{\end{lemstyle}}
\let\oldcorollary\corollary
\renewenvironment{corollary}{\begin{corstyle}\oldcorollary}{\end{corstyle}}
\newcommand{\Top}{\textsc{Top}}
\newcommand{\Lim}{\textsc{Lim}}
\newcommand{\Groupoid}{\textsc{Groupoid}}
\newcommand{\apair}[1]{\left\langle #1\right\rangle}
\definecolor{mycolor}{rgb}{0.1, 0.6, 0.3}
\begin{document}

\pagestyle{empty}


\begin{titlepage}
	\centering
	
	\vspace*{2cm}
	
	\vspace{4cm}
    \begin{tikzpicture}
	\draw[cyan, line width=2mm] (-8, 1.5) -- (-8, 2) -- (-6.5, 2);
	\draw[red!70!black, line width=2mm] (8, -1.5) -- (8, -2) -- (6.5, -2);
	
	\node[text width=16cm, align=center] at (0, 0) {
		{\Huge \textbf{Algebraic Topology without open sets:}}\\[2em]
		{\Large \textit{A net approach to homotopy theory in limit spaces}}
	};
\end{tikzpicture}

\vspace{2cm}
	
	{\Large Rodrigo Santos Monteiro} \\
	
	\vspace{10cm}
	
	{\large Universidade Estadual de Santa Cruz} \\
	{\large \today}

\end{titlepage}
\newpage

\

\newpage
\begin{center}
	\Large{Algebraic Topology without open sets: A net approach to homotopy theory in limit spaces}

\vspace{4cm}

\large{Rodrigo Santos Monteiro}
\end{center}

\vspace{4cm}

\begin{flushright}
\begin{minipage}{10cm}
	Final Course Work presented to the Universidade Estadual de Santa Cruz for the degree of Bachelor of Mathematics.
	
	\vspace{0.5cm}
	\textbf{Advisor}: Prof. Dr. Renan Maneli Mezabarba.
	
\end{minipage}
\end{flushright}

\vspace{8cm}

\begin{center}
\textbf{Ilhéus - Bahia} \\
\today
\end{center}

\newpage

\thispagestyle{empty}

\begin{center}
	\Large{Algebraic Topology without open sets: A net approach to homotopy theory in limit spaces}

	\vspace{2.2cm}
	
	\large{Rodrigo Santos Monteiro}
\end{center}

\vspace{2.2cm}

\hfill

\begin{flushright}
	
	\begin{minipage}{8.5 cm}

		\begin{small} 
			\setlength{\baselineskip}{\baselineskip}

			{Final Course Work presented to the Universidade Estadual de Santa Cruz for the degree of Bachelor of Mathematics.
			}\\

			\vspace*{1.0 cm}
			
			{\textbf{{\large Examination Board}:}\\
				
				\vspace*{1.0 cm}
				
				\rule{\linewidth}{.1 mm}\\
				{ \centering Prof. Dr. Renan Maneli Mezabarba\\
				\ \ \ \ \ \ \ \ \ \ \ \  \ \ \ \  \ \ \ \ \ \ \ \ \ \ \ \ \ \  UESC
				}
				
				\vspace*{1.0 cm}
				
				\rule{\linewidth}{.1 mm}\\
				{\centering Prof. Dr. Germán Ignacio Gomero Ferrer\\
					\ \ \ \ \ \ \ \ \ \ \ \  \ \ \ \  \ \ \ \ \ \ \ \ \ \ \ \ \ \ 	UESC
				}
				
				\vspace*{1.0 cm}
				
				\rule{\linewidth}{.1 mm}\\
				{\centering Prof. Dr. Weslem Liberato Silva\\
					 	\ \ \ \ \ \ \ \ \ \ \ \  \ \ \ \  \ \  \ \ \ \ \ \ \ \ \ \ \ \   UESC}
			}

		\end{small} 

	\end{minipage}
	
\end{flushright}

\newpage

\
\newpage

\chapter*{Abstract}

\thispagestyle{empty}

Convergence spaces are a generalization of topological spaces. The category of convergence spaces is well-suited for Algebraic Topology, one of the reasons is the existence of exponential objects provided by continuous convergence. In this work, we use a net-theoretic approach to convergence spaces. The goal is to simplify the description of continuous convergence and apply it to problems related to homotopy theory. We present methods to develop the basis of homotopy theory in limit spaces, define the fundamental groupoid, and prove the groupoid version of the Seifert-van Kampen Theorem for limit spaces.

\vspace{0.2cm}

\begin{flushleft}
	\textbf{Keywords:} Convergence spaces, Nets, Homotopy Theory
\end{flushleft}

\newpage

\chapter*{List of Symbols}

\vspace{-2cm}
\begin{description}
	\item $\mathcal{P}(X)$: \ \  family of subsets of $X$, ~\pageref{symbol:PX}.
	\item $\textsc{Fil}^*(X)$: \ \  family of proper filters on a set $X$, ~\pageref{symbol:Fil}.
	\item $\mathcal{N}_x$: \ \ family of neighborhoods of a point $x$, ~\pageref{symbol:Nx}.
	\item $\langle x_n\rangle_n$ or $\langle x_n\rangle_{n\in\mathbb{N}}$:\ \  sequence $\mathbb{N}\to X$ such that $n\mapsto x_n$, ~\pageref{symbol:sequence}.
	\item $\langle x_n\rangle_n^{\uparrow}$: \ \ filter generated by a sequence $\langle x_n\rangle_n$, ~\pageref{symbol:sequenceFilter}.
	\item $\mathcal{B}^{\uparrow}$: \ \ filter generated by a family $\mathcal{B}$, ~\pageref{symbol:familyFilter}.
	\item $\mathcal{F}\to x$: \ \ a filter $\mathcal{F}$ converges to $x$, ~\pageref{symbol:filterConverge}.
	\item $\textsc{Nets}(X)$: \ \ class of nets on a set $X$, ~\pageref{symbol:nets}.

	\item $\varphi\to x$: \ \ a net $\varphi$ converges to $x$, ~\pageref{symbol:netConverge}.
	\item $\varphi^{\uparrow}$: \ \ induced filter by a net $\varphi$, ~\pageref{symbol:netFilter}.
		\item $\langle a,b\rangle$: \ \ ordered pair with coordinates $a$ and $b$, ~\pageref{symbol:pair}.
	\item $\hbox{dom}(\varphi)$: \ \ domain of a net $\varphi$,~\pageref{symbol:domainNet}.
		\item $\lim_\tau$: \ \ induced preconvergence by a topology $\tau$ , ~\pageref{symbol:limTau}.
	\item $\to_{\mathbb{R}}$: \ \ usual convergence on the real line, ~\pageref{symbol:convR}.
	\item $\to_{\mathbb{R}^2}$: \ \  usual convergence on the plane, ~\pageref{symbol:convR2}.
		\item $\mathcal{C}(X,Y)$: \ \ set of continuous functions $X\to Y$, ~\pageref{symbol:continuousFunctions}.
	\item $1_X$: \ \ identity function on $X$, ~\pageref{symbol:identity}.
	\item $\textsc{PrConv}$: \ \ category of preconvergence spaces, ~\pageref{symbol:PrConv}.
	\item $\textsc{Conv}$: \ \ category of convergence spaces~\pageref{symbol:Conv}.
	\item $\textsc{Lim}$: \ \ category of limit spaces, ~\pageref{symbol:Lim}.
	\item $\textsc{Top}$: \ \ category of topological spaces, ~\pageref{symbol:Top}.

	\item $L\geq L'$:  \ \ $L$ is a preconvergence stronger than $L'$, ~\pageref{symbol:preconvergenceOrder}.
	\item $\bigvee_{i\in \mathcal{I}} f_i^{-1}L_i$:  \ \ initial preconvergence induced by functions $f_i$, ~\pageref{symbol:initialPreconv}.
	\item $\prod_{i\in\mathcal{I}} X_i$: \ \ product of a family $\{X_i\}_{i\in\mathcal{I}}$, ~\pageref{symbol:product}.
	\item $\bigwedge_{i\in \mathcal{I}} f_iL_i$:  \ \ final preconvergence induced by functions $f_i$, ~\pageref{symbol:finalPreconv}.
	\item $\coprod_{i\in\mathcal{I}} X_i$: \ \ coproduct of a family $\{X_i\}_{i\in\mathcal{I}}$, ~\pageref{symbol:coproduct}.
	\item $\hbox{inh}_L(S)$: \ \ $L$-inherence of a subset $S$, ~\pageref{symbol:inherence}.
	\item $\mathcal{O}(L)$:  \ \ induced topology by a preconvergence $L$, ~\pageref{symbol:inducedTopology}.
	\item $\sqcup(L)$: \ \ limit modification of a convergence $L$, ~\pageref{symbol:limitMod}.
		\item $\langle x_d\rangle_d$ or $\langle x_d\rangle_{d\in\mathbb{D}}$: \ \  function $\mathbb{D}\to X$ such that $d\mapsto x_d$, ~\pageref{symbol:function}.

	\item $\hbox{adh}_L(S)$:  \ \ $L$-adherence of a subset $S$, ~\pageref{symbol:adherence}.
	\item $a^{\uparrow}$:  \ \ set of all elements greater than $a$ in a directed set, ~\pageref{symbol:greaterSet}.
	\item $\gamma*\gamma'$:  \ \ concatenation of paths $\gamma$ and $\gamma'$, ~\pageref{symbol:concatenation}.
	\item $\Pi(X)$: \ \  fundamental groupoid of a limit space $X$, ~\pageref{symbol:groupoid}.
	\item $\pi_1(X,x_0)$:  \ \ fundamental group of a pointed limit space $\langle X,x_0\rangle$, ~\pageref{symbol:fundGroup}.
\end{description}
\newpage 

\

\newpage

\tableofcontents
\thispagestyle{empty}
\newpage

\newpage

\

\newpage

\chapter*{Introduction}

\vspace{-1cm}
Algebraic Topology is one of the most important branches of mathematics which uses algebraic tools to study topological spaces. The basic goal is to find algebraic invariants that classify topological spaces up to homeomorphism. The most important of these invariants are
homotopy groups, homology groups, and cohomology groups. The concept of homotopy is a formulation of the intuitive idea of a continuous deformation from one geometrical configuration to other in the sense that this concept formalizes the idea of continuous deformation of a continuous function. On the other hand, the concept of
homology formalizes the intuitive idea of a curve bounding an area or a surface bounding a volume. Cohomology is the dual concept of homology. But what does Algebraic Topology without open sets mean? In Analysis, convergence is fundamental, but perhaps not as much for Algebraic Topology. The idea of this work is to discuss homotopy theory using convergence of nets in spaces more general than topological spaces, the convergence spaces. Generally, in the literature, convergence spaces are defined as spaces equipped with some notion of convergence of filters. The idea is to associate a filter with its set of limits. In our context, these are spaces equipped with some notion of convergence of nets. This approach is not new, as it was also proposed by \v{C}ech~\cite{cech}, Schechter~\cite{schechter}, Kelley~\cite{Kelley}, Kat\v{e}tov~\cite{Katetov}, Poppe~\cite{poppe}, and Pearson~\cite{Pearson1988}. Choquet introduced convergence structures, such as pseudotopologies and pretopologies, in~\cite{Choquet}. Several papers and books have addressed the topic, including those by Cech~\cite{cech}, Binz~\cite{binz}, Gähler~\cite{gahler}, Schechter~\cite{schechter}, Beattie and  Butzmann~\cite{ref4}, Preuss~\cite{ref5}, and more recently, Nel~\cite{nel}, Dolecki and Mynard~\cite{ref3}, and Dolecki~\cite{schechter1996handbook}.

Topological spaces are convergence spaces, but the converse is not true, as convergence structures are defined very generally and may not always be induced by a topology. Then the category of convergence spaces expands the category of topological spaces. One major advantage of convergence spaces over topological spaces lies in the space of continuous functions.
Not always exists a suitable topology that renders the space of continuous functions into a well-behaved topological space, in the sense of functioning categorically as a function space should. However, for convergence spaces, such a structure exists: the continuous convergence. Simply put, the category of convergence spaces possesses exponential objects, making it a ``convenient" category for Algebraic Topology. The term ``convenient category of topological spaces", introduced by Brown~\cite{Brown0} and popularized by Steenrod~\cite{steenrod}, refers to any category of topological spaces that is sufficiently well-behaved for Algebraic Topology. Recently, Rieser~\cite{rieserTop,riesernew}, Dossena~\cite{dossena}  and Marroquín~\cite{marroquin} have addressed topics in Algebraic Topology in the context of convergence spaces. Their focus is on pseudotopological spaces and closure spaces. The focus of this work is to show how to develop the basis of homotopy theory for limit spaces. In particular, we construct the fundamental groupoid of a limit space. In choosing algebra to model geometry there is a tendency to take groups, rings, fields, modules, etc. Motivated by Brown~\cite{BrownBook}, in this work we use groupoids. The difference between groupoids and groups is that in groupoids there is a partial multiplication which is defined under geometric conditions: two arrows can be compose if and only if the end point of one is the initial point of the other.

The Seifert-van Kampen Theorem is a powerful tool in Algebraic Topology, specifically in the study of fundamental groups of topological spaces. In its groupoid version, the theorem generalizes the classical result by allowing us to compute not just the fundamental group, but the fundamental groupoid, which encodes more detailed information about paths between multiple base points. The groupoid version of the theorem provides a more flexible framework, especially when dealing with spaces that are not path-connected or when it's beneficial to consider multiple base points. We present the groupoid version of the Seifert-van Kampen Theorem for limit spaces.

This work extends the author's previous paper~\cite{absolutepaper}, and we adopt terminology and notation from a textbook that is currently in draft. We assume the reader has a basic understanding of Set Theory, General and Algebraic Topology, and Category Theory. For sake of completeness, some basic notions of Category Theory can be found in  Leinster~\cite{tom} and Roman~\cite{roman}, while for Set Theory, we recommend Jech~\cite{Jech2002}. Finally,  for General Topology, Engelking~\cite{Engelking} and for Algebraic Topology, Kammeyer~\cite{Kammeyer} and Vick~\cite{vick}.

	This work is organized as follows: Chapter \ref{chap1} present the basic terminology related to filters and nets. In Chapter \ref{chap2} we introduce the convergence spaces, discuss final and initial structures, modifiers, and some topological notions in the context of convergence spaces. Chapter \ref{chap3} develop the basis of homotopy theory in limit spaces and construct the Fundamental Groupoid. In Chapter \ref{chap4}, we present the Seifert–Van Kampen theorem for limit spaces. Finally, in Chapter \ref{chap5}, we discuss the presented results and show the direction in which the research will proceed.

\pagestyle{myheadings}
\setcounter{page}{1}
\addcontentsline{toc}{chapter}{Introduction}
\newpage

\chapter{Basic terminology related to filters and nets}

\titleformat{\section}[block]
{\bfseries\Large\vrule width 2pt\hspace{1em}} 
{\thesection}{0.5em}
{} 
\titlespacing{\section}{0pt}{*4}{*4}

\label{chap1}

In this chapter, we introduce the basic terminology related to filters and nets. Our principal references are Dolecki~\cite{schechter1996handbook}, Dolecki and Mynard~\cite{ref3} and Schechter~\cite{schechter}. Filters and nets are important concepts in General Topology that help study convergence in topological spaces, generalizing the idea of sequences for situations where sequences are not sufficient, such as in spaces that are not sequentially compact or where the topological structure is more complex. Nets were introduced by E. H. Moore and H. L. Smith~\cite{MooreSmith} in 1922 to address convergence problems in topological spaces, while filters were introduced by Henri Cartan in 1937 as a way to study convergence without relying on ordinal number theory. In Section \ref{sec1.2}, we will see that, in a certain way, filters and nets are equivalent. 
\section{Filters}

\label{symbol:PX}
\label{symbol:Fil}
\begin{definition}
	\label{filters}
	Let $X$ be a set and $\mathcal{F}\subseteq\mathcal{P} (X)$ be a nonempty family of subsets of $X$. We say that $\mathcal{F}$ is a \textbf{filter} on $X$ if
	\begin{enumerate}
		\item $A\cap B\in\mathcal{F}$ whenever $A,B\in  \mathcal{F}$
		\item $C\in\mathcal{F}$ whenever $A\subseteq C$ and $A\in\mathcal{F}$
	\end{enumerate}
	A filter $\mathcal{F}$ is said to be \textbf{proper} if $\emptyset\notin\mathcal{F}$. We denote the family of proper filter on $X$ by $\textsc{Fil}^*(X)$.
\end{definition}

\begin{remark}
	In this work, we will only deal with proper filters. Initially, we could allow filters to be non-proper, but in that case, we would have to consider the empty net to obtain the equivalence between filters and nets that we will see in Section \ref{sec1.2}.
\end{remark}
More generally, a filter  is a special subset of a partially ordered set, describing ``large" or ``eventual" elements. Filters appear in both Order and Lattice Theory, as well as in Topology, where they emerged. Additionally, filters give an idea of approximation in some sense.

\begin{figure}[H]
	\centering
	\includegraphics[width=8cm]{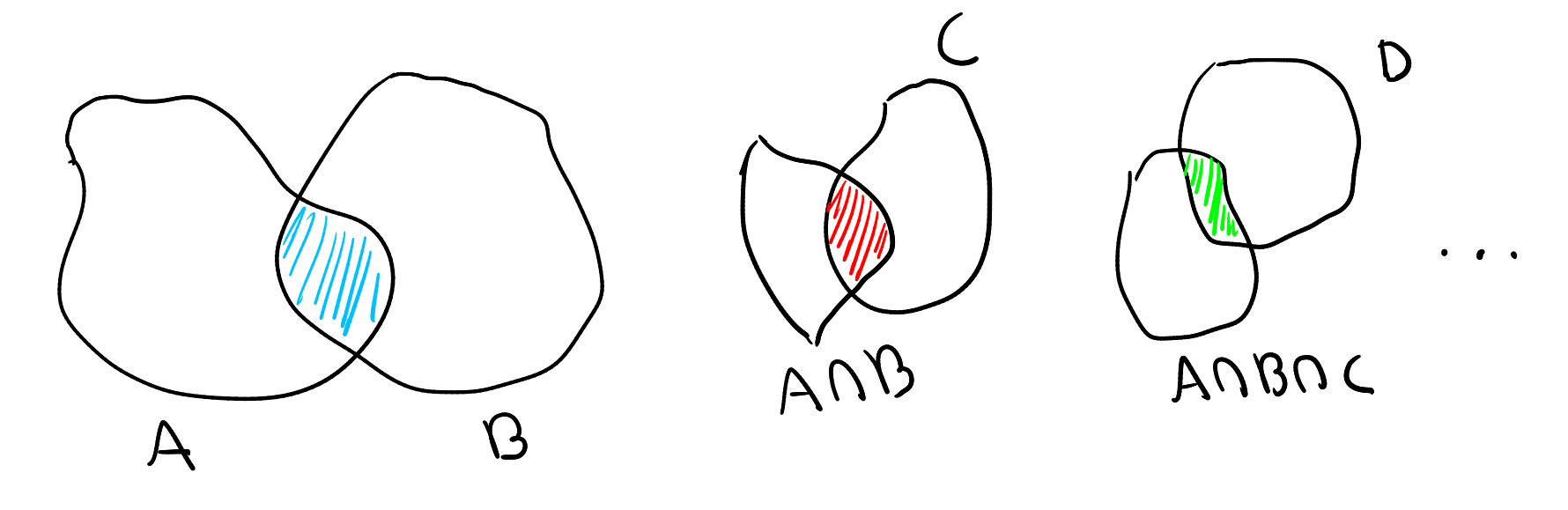}
	\label{fig1}
	\caption{Suppose that $A,B,C$ and $D$ are elements of a filter. Condition 1 in Definition \ref{filters} allows us to find increasingly smaller sets the filter, which gives us an idea of approximation or convergence.}
\end{figure}

\begin{example}
	\label{symbol:Nx}
	Let $ X$ be a topological space. For each $x\in X$, the family $\mathcal{N}_x$ of neighborhoods of $x$, is a filter. Indeed, if $A,B\in\mathcal{N}_x$ there are open sets $U,V\subseteq X$ such  that $x\in U\cap V$, $U\subseteq A$ and $V\subseteq B$. Notice that $U\cap V\subseteq A\cap B$ and, since $U\cap V$ is open, it follows that $A\cap B\in\mathcal{N}_x$. Let $A\in\mathcal{N}_x$ be a neighborhood of $x$ such that $A\subseteq C$, there is an  open set $U\subseteq X$ such that $x\in U$ and $U\subseteq A$. Notice that $U\subseteq A\subseteq C$ and, therefore, $C\in\mathcal{N}_x$. \label{ex1.1}
\end{example}

\begin{example}
	\label{symbol:sequence} \label{symbol:sequenceFilter}
	For a sequence $\langle x_n\rangle_n$ in $X$, we associate the filter $\langle x_n\rangle_n^{\uparrow}$ such that $A\in \langle x_n\rangle _n^{\uparrow}$ if and only if contains a subset of the form $\{x_n:n\geq n_0\}$ for some $n_0\in\mathbb{N}$, which we refer to a tail set of the sequence. Let us show that $\langle x_n\rangle_n^{\uparrow}$ is indeed a filter. This family is nonempty because contains the tails of the sequence. Given $A,B\in\langle x_n\rangle_n^{\uparrow}$, there are $n_0,n_1\in\mathbb{N}$ such that $\{x_n:n\geq n_0\}\subseteq A$ and $\{x_n:n\geq n_1\}\subseteq B$. Considering $n_2=\hbox{max}\{n_0,n_1\}$ it happens that $\{x_n:n\geq n_2\}\subseteq A\cap B$. This means that $A\cap B\in\langle x_n\rangle_n^{\uparrow}$.
	 Now, if $A\in\langle x_n\rangle_ n^{\uparrow}$ and $A\subseteq B$, there is $n_0\in\mathbb{N}$ such that $\{x_n:n\geq n_0\}\subseteq A$. Then $\{x_n:n\geq n_0\}\subseteq B$. It follows that $B\in\langle x_n\rangle_n^{\uparrow}$.
	\label{ex1.2}
\end{example}

In Example \ref{ex1.2}, the tails of the sequence are the witnesses to of membership in the filter, and in Example \ref{ex1.1}, the open sets containing the point fulfill the same role. This motivates the definition of a filter basis.

\begin{definition}
	Let $\mathcal{F}$ be a filter and $\mathcal{B}\subseteq\mathcal{F}$ be a subfamily. We say that $\mathcal{B}$ is a \textbf{basis} of $\mathcal{F}$ if for every $B\in\mathcal{F}$ there is $A\in\mathcal{B}$ such that $A\subseteq B$. In this case we say that $\mathcal{B}$ generates $\mathcal{F}$ and we write $\mathcal{F}=\mathcal{B}^{\uparrow}$. \label{symbol:familyFilter}
\end{definition}

\begin{example}
	Let $X$ be a topological space. We denote by $\mathcal{T}_x$ the family of all open sets containing $x\in X$. Notice that $\mathcal{T}_x$ is a basis of $\mathcal{N}_x$. Indeed, recall that  $V\in\mathcal{N}_x$ if and only if there is $A\in\mathcal{T}_x$ such that $A\subseteq V$. 
\end{example}

\begin{proposition}
	Let $\mathcal{B}$ be a nonempty family of subsets of a set $X$. The family $$\mathcal{B}^{\uparrow}=\{A\subseteq X:\exists B\in\mathcal{B}\hbox{ such that } B\subseteq A\}$$ is a filter if and only if for every $B,C\in\mathcal{B}$ there is $D\in\mathcal{B}$ such that $D\subseteq B\cap C$.
\end{proposition}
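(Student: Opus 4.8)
The plan is to prove the two implications of the biconditional separately, after first recording a preliminary observation that streamlines both directions: every $B\in\mathcal{B}$ satisfies $B\subseteq B$, so $\mathcal{B}\subseteq\mathcal{B}^{\uparrow}$. This containment is what lets me move freely between statements about members of $\mathcal{B}$ and statements about members of $\mathcal{B}^{\uparrow}$, and it is the crux of both directions.

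For the reverse implication, I would assume the directedness condition and verify the two filter axioms of Definition \ref{filters}. Nonemptiness is immediate, since $\mathcal{B}$ is nonempty and $\mathcal{B}\subseteq\mathcal{B}^{\uparrow}$. The upward-closure axiom (axiom 2) is purely formal and does not use the hypothesis at all: if $A\in\mathcal{B}^{\uparrow}$ and $A\subseteq C$, then some $B\in\mathcal{B}$ has $B\subseteq A\subseteq C$, whence $C\in\mathcal{B}^{\uparrow}$ by transitivity of inclusion. The only place the hypothesis enters is the intersection axiom (axiom 1): given $A_1,A_2\in\mathcal{B}^{\uparrow}$, I extract witnesses $B,C\in\mathcal{B}$ with $B\subseteq A_1$ and $C\subseteq A_2$, apply the assumption to produce $D\in\mathcal{B}$ with $D\subseteq B\cap C\subseteq A_1\cap A_2$, and conclude $A_1\cap A_2\in\mathcal{B}^{\uparrow}$.

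For the forward implication, I would assume $\mathcal{B}^{\uparrow}$ is a filter and deduce directedness directly. Taking any $B,C\in\mathcal{B}$, the containment $\mathcal{B}\subseteq\mathcal{B}^{\uparrow}$ gives $B,C\in\mathcal{B}^{\uparrow}$, so axiom 1 yields $B\cap C\in\mathcal{B}^{\uparrow}$; unpacking the definition of $\mathcal{B}^{\uparrow}$ then furnishes exactly the desired $D\in\mathcal{B}$ with $D\subseteq B\cap C$.

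I do not expect a genuine obstacle here, as the argument is essentially definitional bookkeeping. The one point deserving care is recognizing that the directedness condition corresponds precisely to the intersection axiom and nothing else, so that the equivalence is really an equivalence between a single filter axiom and the hypothesis, with the remaining filter properties holding automatically for any nonempty $\mathcal{B}$.
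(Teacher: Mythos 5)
Your proposal is correct and follows essentially the same route as the paper's proof: both directions hinge on the containment $\mathcal{B}\subseteq\mathcal{B}^{\uparrow}$, the converse verifies the intersection axiom via the directedness hypothesis, and upward closure is noted to be automatic. Your write-up is simply a more explicit version of the same argument.
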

\begin{proof}
	The first implication holds because $\mathcal{B}\subseteq \mathcal{B}^{\uparrow}$ and $\mathcal{B}^{\uparrow}$ is a filter. For the converse, if $F,G\in\mathcal{B}^{\uparrow}$ there are $A,B\in\mathcal{B}$ such that $A\subseteq F$ and $B\subseteq G$. There is $C\in\mathcal{B}$ such that $C\subseteq A\cap B$. Since $A\cap B\subseteq F\cap G$, it follows that $C\subseteq F\cap G$. This means that $F\cap G\in \mathcal{B}^{\uparrow}$. This family is clearly closed upwards. Then $\mathcal{B}^{\uparrow}$ is a filter.
\end{proof}

The previous proposition allows us to generate a filter from a family of subsets, and it is straightforward  to verify that the filter generated corresponds to the intersection of all filters containing this family.



\begin{definition}
	A proper filter $\mathfrak{u}$ is an\textbf{ ultrafilter} if it is a maximal filter, that is, if $\mathfrak{u}=\mathcal{F}$ whenever $\mathcal{F}$ is a proper filter such that $\mathfrak{u}\subseteq\mathcal{F}$.
\end{definition}

Ultrafilters play a significant role in various branches, particularly in Set Theory, Topology, and Model Theory. In Topology, they are closely related to compactness and other fundamental topological properties.

\begin{proposition}
	For a proper filter $\mathcal{F}\in\textsc{Fil}^*(X)$ the following are equivalent
	\begin{enumerate}
		\item $\mathcal{F}$ is an ultrafilter.
		\item For every $A\subseteq X$, either $A\in\mathcal{F}$ or $X\setminus A\in\mathcal{F}$.
		\item For every $A,B\subseteq X$ it happens that $A\in\mathcal{F}$ or $B\in\mathcal{F}$ whenever $A\cup B\in\mathcal{F}$.
	\end{enumerate}
\end{proposition}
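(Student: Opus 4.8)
The plan is to establish the equivalences by proving the cycle of implications $(1)\Rightarrow(2)\Rightarrow(3)\Rightarrow(1)$, since each individual step reduces to an elementary manipulation of the filter axioms from Definition \ref{filters}. This cyclic strategy is more economical than proving all six pairwise implications separately.

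First, for $(1)\Rightarrow(2)$, I would fix $A\subseteq X$ and assume $A\notin\mathcal{F}$, aiming to show $X\setminus A\in\mathcal{F}$. The idea is to enlarge $\mathcal{F}$ by adjoining $X\setminus A$ and then invoke maximality. The key observation is that $A\notin\mathcal{F}$ forces $F\cap(X\setminus A)\neq\emptyset$ for every $F\in\mathcal{F}$; otherwise some $F\subseteq A$ would place $A$ in $\mathcal{F}$ by upward closure. This nonemptiness condition guarantees that the family generated by $\mathcal{F}\cup\{X\setminus A\}$ is a \emph{proper} filter containing $\mathcal{F}$, so maximality forces it to equal $\mathcal{F}$, whence $X\setminus A\in\mathcal{F}$.

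Next, $(2)\Rightarrow(3)$ is the most direct step: assuming $A\cup B\in\mathcal{F}$ and $A\notin\mathcal{F}$, condition $(2)$ yields $X\setminus A\in\mathcal{F}$, and intersecting gives $(A\cup B)\cap(X\setminus A)=B\cap(X\setminus A)\in\mathcal{F}$; since this set is contained in $B$, upward closure delivers $B\in\mathcal{F}$. Finally, for $(3)\Rightarrow(1)$, I would take any proper filter $\mathcal{G}\supseteq\mathcal{F}$ and any $G\in\mathcal{G}$, and show $G\in\mathcal{F}$. Applying $(3)$ to the decomposition $X=G\cup(X\setminus G)\in\mathcal{F}$ gives $G\in\mathcal{F}$ or $X\setminus G\in\mathcal{F}$; the latter is impossible, since it would place both $G$ and $X\setminus G$ in the proper filter $\mathcal{G}$, forcing $\emptyset\in\mathcal{G}$. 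Hence $\mathcal{G}=\mathcal{F}$, so $\mathcal{F}$ is maximal.

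I expect the main obstacle to be the first implication: one must be careful to verify that the enlarged family is genuinely a proper filter before invoking maximality, which is precisely where the hypothesis $A\notin\mathcal{F}$ is used. The remaining two implications are routine consequences of the filter axioms and the properness of $\mathcal{F}$ and $\mathcal{G}$.
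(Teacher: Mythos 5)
Your proposal is correct and follows essentially the same route as the paper: the same cycle $(1)\Rightarrow(2)\Rightarrow(3)\Rightarrow(1)$, with the first implication handled by adjoining a set to $\mathcal{F}$ and invoking maximality, and the last by showing a larger proper filter would contain a set and its complement. If anything, you are slightly more careful than the paper in the first step, where you explicitly verify that the enlarged family is a \emph{proper} filter via $F\cap(X\setminus A)\neq\emptyset$, a point the paper's proof leaves implicit.
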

\begin{proof}

	$(1\implies 2)$ Let $A\subseteq X$ be a subset of $X$ such that $X\setminus A\notin \mathcal{F}$. Note that $\mathcal{G}=(\mathcal{F}\cup\{A\})^{\uparrow}$ is a filter and $\mathcal{F}\subseteq \mathcal{G}$. Since $\mathcal{F}$ is an ultrafilter, it happens that $\mathcal{F}=\mathcal{G}$. This means that $A\in\mathcal{F}$.
	
	$(2\implies 3)$ Let $A$ and $B$ be subsets of $X$ such that $A\cup B\in\mathcal{F}$ and $A,B\notin\mathcal{F}$. By hypothesis, it happens that  $X\setminus A\in\mathcal{F}$ and $X\setminus B\in\mathcal{F}$. Notice that $$(X\setminus A)\cap (X\setminus B)=X\setminus (A\cup B)\in\mathcal{F}$$This is a contradiction, because in this case $\emptyset=(A\cup B)\cap X\setminus(A\cup B)\in\mathcal{F}$ and $\mathcal{F}$ is a proper filter. It follows that $A\in\mathcal{F}$ or $B\in\mathcal{F}$.
	
	$(3\implies 1)$ Let $\mathcal{G}\in\textsc{Fil}^*(X)$ be a filter such that $\mathcal{F}\subseteq \mathcal{G}$. Suppose that $\mathcal{G}\neq\mathcal{F}$. There is $A\in\mathcal{G}$ such that $A\notin\mathcal{F}$. By hypothesis $X\setminus A\in\mathcal{F}\subseteq \mathcal{G}$. Then $A\in\mathcal{G}$ and $ X\setminus A\in\mathcal{G}$, in this case $\emptyset=A\cap X\setminus A\in\mathcal{G}$, a contradiction. This proves that $\mathcal{G}=\mathcal{F}$. Then $\mathcal{F}$ is an ultrafilter.
\end{proof}

\begin{example}
	For a point $x\in X$, the family $$\mathfrak{u}_x=\{A\subseteq X:x\in A\}$$
	is an ultrafilter, called the principal ultrafilter generated by $x$. Notice that this ultrafilter is generated by a constant sequence.
\end{example}

\begin{definition}
	Let $X$ be a topological space. We say that a filter $\mathcal{F}\in\textsc{Fil}^*(X)$ converges to a point $x\in X$ if $\mathcal{N}_x\subseteq\mathcal{F}$. In this case we write $\mathcal{F}\to x$. \label{symbol:filterConverge}
\end{definition}

\begin{proposition}
	\label{compact}
	A topological space is compact if and only if every ultrafilter on it converges.
\end{proposition}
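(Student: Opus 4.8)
**The plan is to prove both directions using the correspondence between compactness and the finite intersection property, transported to ultrafilters.**

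Recall that a topological space $X$ is compact if and only if every family of closed sets with the finite intersection property (FIP) has nonempty intersection, equivalently, every family of open sets that fails to cover $X$ admits... — but the cleanest route runs through filters. The key facts I would assume or quickly establish are: (i) every proper filter extends to an ultrafilter (a Zorn's lemma argument), and (ii) a filter $\mathcal{F}$ converges to $x$ precisely when $\mathcal{N}_x \subseteq \mathcal{F}$, per Definition \ref{symbol:filterConverge}. I will also use that the adherence points of a filter are exactly the points every neighborhood of which meets every member of the filter, i.e. $x \in \operatorname{adh}\mathcal{F}$ iff $N \cap F \neq \emptyset$ for all $N \in \mathcal{N}_x$ and $F \in \mathcal{F}$, and that for an \emph{ultra}filter, adherence and convergence coincide.

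For the forward direction, suppose $X$ is compact and let $\mathfrak{u}$ be an ultrafilter on $X$. Consider the family of closures $\{\overline{U} : U \in \mathfrak{u}\}$. Since $\mathfrak{u}$ is a proper filter, it has the finite intersection property, and hence so does this family of closed sets; by compactness their intersection is nonempty, so there is a point $x \in \bigcap_{U \in \mathfrak{u}} \overline{U}$. This $x$ is an adherence point of $\mathfrak{u}$, meaning every neighborhood of $x$ meets every member of $\mathfrak{u}$. The crucial step is to upgrade adherence to convergence: given $N \in \mathcal{N}_x$, either $N \in \mathfrak{u}$ or $X \setminus N \in \mathfrak{u}$ by the ultrafilter dichotomy; the latter is impossible since $X\setminus N$ cannot meet the interior of $N$ while $x$ is an adherence point, so $N \in \mathfrak{u}$. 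Thus $\mathcal{N}_x \subseteq \mathfrak{u}$, i.e. $\mathfrak{u} \to x$.

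For the converse, suppose every ultrafilter converges, and let $\mathcal{C}$ be a family of closed sets with the FIP; I must show $\bigcap \mathcal{C} \neq \emptyset$. The FIP guarantees that $\mathcal{C}$ generates a proper filter $\mathcal{F}$, which by fact (i) extends to an ultrafilter $\mathfrak{u} \supseteq \mathcal{F}$. By hypothesis $\mathfrak{u} \to x$ for some $x$, so $\mathcal{N}_x \subseteq \mathfrak{u}$. I then claim $x \in C$ for every $C \in \mathcal{C}$: since $C \in \mathcal{F} \subseteq \mathfrak{u}$ and every neighborhood of $x$ lies in $\mathfrak{u}$, each neighborhood meets $C$ (two members of a proper filter intersect), so $x \in \overline{C} = C$ as $C$ is closed. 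Hence $x \in \bigcap \mathcal{C}$, establishing compactness.

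The main obstacle, and the step deserving the most care, is the adherence-to-convergence upgrade in the forward direction, which is exactly where the \emph{ultra}filter hypothesis is indispensable: for a general filter, an adherence point need not be a limit, and the proof genuinely relies on the dichotomy ``$A \in \mathfrak{u}$ or $X \setminus A \in \mathfrak{u}$'' established in the preceding proposition. I would make sure to invoke that characterization explicitly rather than gloss over it.
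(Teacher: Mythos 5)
Your proof is correct. Note that the paper does not actually supply an argument here --- it simply defers to Engelking --- so there is nothing to diverge from; what you have written is the standard proof via the finite-intersection-property characterization of compactness, and it is complete. Both directions are sound: in the forward direction the passage from $x\in\bigcap_{U\in\mathfrak{u}}\overline{U}$ (an adherence point) to $\mathcal{N}_x\subseteq\mathfrak{u}$ correctly isolates where the ultrafilter dichotomy $A\in\mathfrak{u}$ or $X\setminus A\in\mathfrak{u}$ is needed, since $N$ and $X\setminus N$ cannot both meet every neighborhood of $x$; in the converse direction the extension of the filter generated by a family of closed sets with the finite intersection property to a convergent ultrafilter (which is exactly Lemma \ref{ultrafilter}, stated just after this proposition) yields a point of $\bigcap\mathcal{C}$ because two members of a proper filter always intersect and each $C$ is closed.
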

\begin{proof}
	See~\cite{Engelking}.
\end{proof}

Recall that a family of subsets has the finite intersection property if every finite intersection of sets in the family is nonempty.  The following proposition shows the relationship between this property and ultrafilters.

\begin{lemma}
	\label{ultrafilter}
	Every family that has the finite intersection property is contained in an ultrafilter.
\end{lemma}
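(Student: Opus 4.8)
The plan is to proceed in two stages: first turn the given family into a proper filter, then enlarge that filter to a maximal one by a Zorn's Lemma argument.

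First I would fix a family $\mathcal{A}\subseteq\mathcal{P}(X)$ with the finite intersection property and consider the collection $\mathcal{B}$ of all finite intersections $A_1\cap\cdots\cap A_n$ with each $A_i\in\mathcal{A}$. By the finite intersection property no member of $\mathcal{B}$ is empty, and the intersection of any two members of $\mathcal{B}$ is again a finite intersection of elements of $\mathcal{A}$, hence lies in $\mathcal{B}$. In particular, for all $B,C\in\mathcal{B}$ the set $D=B\cap C$ belongs to $\mathcal{B}$ and satisfies $D\subseteq B\cap C$, so by the earlier proposition characterizing when a generated family $\mathcal{B}^{\uparrow}$ is a filter, the family $\mathcal{F}=\mathcal{B}^{\uparrow}$ is a filter. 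It is proper, since every element of $\mathcal{F}$ contains some nonempty $B\in\mathcal{B}$ and therefore cannot be empty; and clearly $\mathcal{A}\subseteq\mathcal{B}\subseteq\mathcal{F}$.

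Next I would extend $\mathcal{F}$ to an ultrafilter. Let $\mathbb{P}$ be the set of all proper filters on $X$ containing $\mathcal{F}$, partially ordered by inclusion. Given a nonempty chain $\mathcal{C}\subseteq\mathbb{P}$, I claim its union $\mathcal{G}=\bigcup\mathcal{C}$ is again a proper filter containing $\mathcal{F}$: it is upward closed because each member of $\mathcal{C}$ is; it is closed under finite intersections because any two sets in $\mathcal{G}$ already lie in a common member of the chain by linearity of $\mathcal{C}$, where their intersection is taken; and $\emptyset\notin\mathcal{G}$ because $\emptyset$ belongs to no member of the chain. Thus $\mathcal{G}\in\mathbb{P}$ is an upper bound for $\mathcal{C}$, and Zorn's Lemma provides a maximal element $\mathfrak{u}\in\mathbb{P}$.

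Finally I would check that this maximal element is an ultrafilter. If $\mathcal{H}\in\textsc{Fil}^*(X)$ is any proper filter with $\mathfrak{u}\subseteq\mathcal{H}$, then $\mathcal{F}\subseteq\mathfrak{u}\subseteq\mathcal{H}$ shows $\mathcal{H}\in\mathbb{P}$, so maximality of $\mathfrak{u}$ forces $\mathfrak{u}=\mathcal{H}$; hence $\mathfrak{u}$ is maximal among proper filters, that is, an ultrafilter, and it contains $\mathcal{A}$. The main obstacle is the verification that every chain in $\mathbb{P}$ admits an upper bound inside $\mathbb{P}$ — this is where properness of the union must be argued with care and where the appeal to the Axiom of Choice, through Zorn's Lemma, enters; the two algebraic stages around it are routine once the generated-filter proposition is invoked.
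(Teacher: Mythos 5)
Your proof is correct and is the standard argument: generate a proper filter from the finite intersections and extend it to a maximal proper filter via Zorn's Lemma. The paper itself only cites Schechter for this lemma, and the argument given there is essentially the one you reconstruct, so there is nothing to add beyond noting that the chain-upper-bound verification you flag as the delicate point is indeed the only place requiring care (and, implicitly, that $\mathcal{A}$ is nonempty so that $\mathcal{B}$ is a nonempty base, a trivial edge case).
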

\begin{proof}
	See~\cite{schechter}.
\end{proof}

Given a filter $\mathcal{F}$ on $X$ and a function $f:X\to Y$, it is convenient to have a way to use this function to ``push" the filter $\mathcal{F}$ to the set $Y$. Naturally, we use the images of the elements of the filter $\mathcal{F}$ to generate a filter on $Y$. The filter image of $\mathcal{F}$ under $f$ is the filter $f(\mathcal{F})=f\mathcal{F}^{\uparrow}$,
where $f\mathcal{F}=\{f[A]:A\in\mathcal{F}\}$.
On the other hand, considering a filter $\mathcal{G}$ in $Y$ such that $f^{-1}[G]\neq\emptyset$ for all $G\in\mathcal{G}$, the idea now is to ``pull" the filter $\mathcal{G}$ to the set $X$ using the function $f$.  The preimage filter of $\mathcal{G}$ under $f$ is the filter $f^{-1}(\mathcal{G})=f^{-1}\mathcal{G}^{\uparrow}$,
where $f^{-1}\mathcal{G}=\{f^{-1}[A]:A\in\mathcal{G}\}$.
But what is the interest in pushing and pulling filters? We can characterize continuity in topological spaces through filter convergence, as follows.

\begin{proposition}
	\label{filter}
Let $X$ and $Y$ be topological spaces.	A function $f:X\to Y$ is continuous if and only if $f(\mathcal{F})\to f(x)$ whenever $\mathcal{F}$ is a filter in $X$ such that $\mathcal{F}\to x$.
\end{proposition}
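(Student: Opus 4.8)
The plan is to prove both implications directly from the definition of filter convergence, namely $\mathcal{F}\to x$ iff $\mathcal{N}_x\subseteq\mathcal{F}$, together with the local formulation of continuity: $f$ is continuous iff for each $x$ and each neighborhood $V$ of $f(x)$ there is a neighborhood $U$ of $x$ with $f[U]\subseteq V$.

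For the forward implication, I would assume $f$ continuous and take any filter $\mathcal{F}\to x$, so that $\mathcal{N}_x\subseteq\mathcal{F}$. To conclude $f(\mathcal{F})\to f(x)$ I must verify $\mathcal{N}_{f(x)}\subseteq f(\mathcal{F})$. Given a neighborhood $V$ of $f(x)$, continuity yields that $f^{-1}[V]$ is a neighborhood of $x$, hence $f^{-1}[V]\in\mathcal{N}_x\subseteq\mathcal{F}$; since $f[f^{-1}[V]]\subseteq V$ and $f[f^{-1}[V]]\in f\mathcal{F}\subseteq f(\mathcal{F})$, the upward closure of the image filter gives $V\in f(\mathcal{F})$. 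As $V$ was arbitrary, $\mathcal{N}_{f(x)}\subseteq f(\mathcal{F})$.

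For the converse, the key observation — and the step that drives the whole argument — is to test the hypothesis on the neighborhood filter itself. Since $\mathcal{N}_x\subseteq\mathcal{N}_x$ trivially, we have $\mathcal{N}_x\to x$, so by hypothesis $f(\mathcal{N}_x)\to f(x)$, i.e. $\mathcal{N}_{f(x)}\subseteq f(\mathcal{N}_x)$. Then for any neighborhood $V$ of $f(x)$ we get $V\in f(\mathcal{N}_x)=(f\mathcal{N}_x)^{\uparrow}$, so there is $A\in\mathcal{N}_x$ with $f[A]\subseteq V$; replacing $A$ by an open neighborhood $U$ of $x$ contained in it gives $f[U]\subseteq V$, which is precisely the local continuity condition at $x$. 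Since $x$ is arbitrary, $f$ is continuous.

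The argument is essentially routine; the only genuine insight is the choice of the canonical test filter $\mathcal{N}_x$ in the converse, which converts an a priori statement quantified over all convergent filters into a single concrete condition at each point. I expect no real obstacle beyond keeping the push-forward bookkeeping straight, namely that $f[A]\subseteq V$ forces $V\in f(\mathcal{F})$ by upward closure of $f(\mathcal{F})$.
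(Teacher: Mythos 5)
Your argument is correct and follows essentially the same route as the paper's proof: push the neighborhood filter through $f$ in the forward direction via $f^{-1}[V]\in\mathcal{N}_x\subseteq\mathcal{F}$, and in the converse test the hypothesis on the canonical filter $\mathcal{N}_x$ to obtain $\mathcal{N}_{f(x)}\subseteq f(\mathcal{N}_x)$. You merely unfold the final step (extracting $f[U]\subseteq V$ from that inclusion) a bit more explicitly than the paper does.
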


\begin{proof}
	Let $f:X\to Y$ be a continuous function and $\mathcal{F}$ be a filter in $X$ such that $\mathcal{F}\to x$. It happens that $\mathcal{N}_x\subseteq\mathcal{F}$. Let $V\in\mathcal{N}_{f(x)}$ be a neighborhood of $f(x)$, there is an open set $U\subseteq V\subseteq Y$ such that $f(x)\in U$. Since $f$ is continuous, $f^{-1}[U]$ is an open set such that $x\in f^{-1}[U]$ and $f^{-1}[U]\subseteq f^{-1}[V]\subseteq X$. It follows that $f^{-1}[V]\in\mathcal{N}_x\subseteq \mathcal{F}$. Then $f[f^{-1}[V]]\subseteq V\in f(\mathcal{F})$, that is, $\mathcal{N}_{f(x)}\subseteq f(\mathcal{F})$. This means that $f(\mathcal{F})\to f(x)$. Conversely, since $\mathcal{N}_x\to x$, it happens that $f(\mathcal{N}_x)\to f(x)$. Then $\mathcal{N}_{f(x)}\subseteq f(\mathcal{N}_x)$. But this inclusion means that $f$ is continuous at $x\in X$. Since 
	$x$ is arbitrary, it follows that 
	$f$ is continuous.
\end{proof}
\section{Nets}
\label{sec1.2}

\begin{definition}
	Let $\langle \mathbb{D},\leq\rangle$ be a preorder, i.e., $\leq$ is reflexive and transitive. We say that $\mathbb{D}$ is a \textbf{directed set} if for every $a,b\in\mathbb{D}$ there is $c\in\mathbb{D}$ such that $a,b\leq c$. A \textbf{net} in $X$ is a function $\mathbb{D}\to X$. We denote by $\textsc{Nets}(X)$ the class of nets in $X$. \label{symbol:nets}
\end{definition}

\begin{remark}
	Note that $\textsc{Nets}(X)$ is not a set if $X$ is nonempty, but rather a proper class. Indeed, since any set is well ordered, a function $Y\to X$ can be seen as a net for any set $Y$. Then $\textsc{Nets}(X)$ is too large to be a set. For details, see~\cite{schechter}. 
\end{remark}

\begin{example}
	A total order $\langle \mathbb{T},\leq\rangle$ is a directed set. Then any function $\varphi:\mathbb{T}\to X$ is a net. In particular, as $\mathbb{N}$ is totally ordered, every sequence is a net.  Essentially, the difference between a net and a sequence is that the domain of a net is not always the set of natural numbers, but psychologically, we treat nets as sequences to gain intuition.
\end{example}

 Recall that in a topological space \( X \), a net \( \varphi: \mathbb{D} \to X \) converges to a point \( x \in X \) if for every neighborhood \( V \subseteq X \) of $x$ there is an index \( d' \in \mathbb{D} \) such that \( \varphi_d \in V \) for all \( d \geq d' \). We denote by $\varphi\to x$. In simpler terms, this means that every neighborhood of the point \( x \) contains a tail of the net, that is, a subset of the form $\{\varphi_d:d\geq d'\}$ for some $d'\in\mathbb{D}$. \label{symbol:netConverge}

\begin{figure}[H]
	\centering
	\includegraphics[width=5cm]{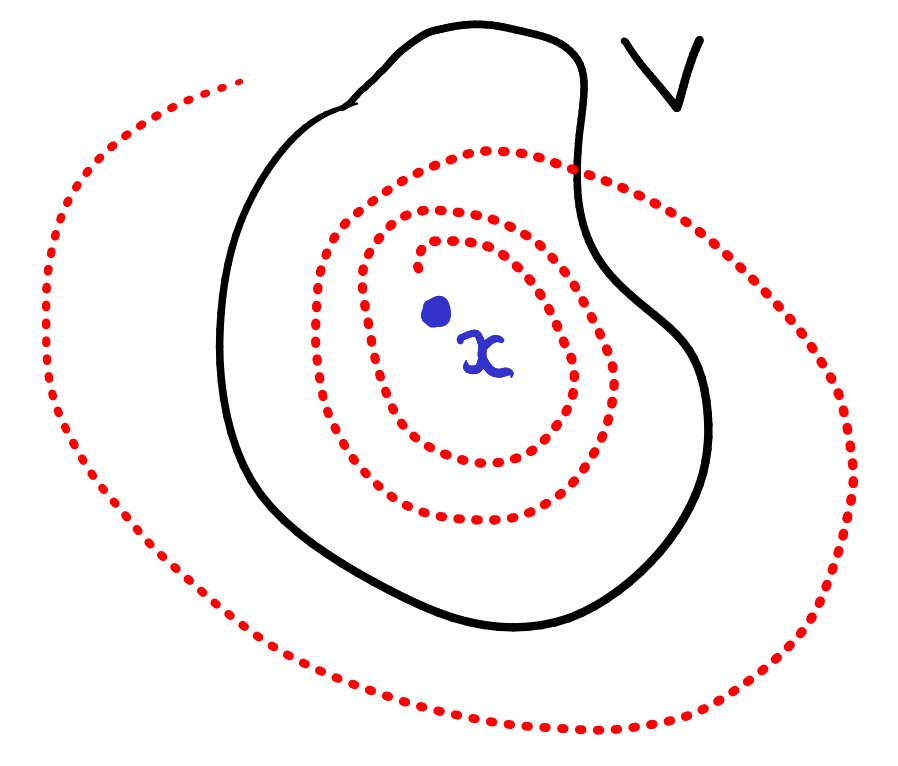}
	\caption{There is a moment from which all terms of the net are within the neighborhood. }
\end{figure}

\begin{example}
	In a Real Analysis course, we are introduced to the Riemann integral, which aims to calculate the area under a curve using Riemann sums. In this example, we generalize this type of integral and see how it can be viewed as the limit of a net. Given real numbers \( a, b \in \mathbb{R} \) such that \( a < b \). Denote by \(\mathbb{P}[a,b]\) the set of finite sequences \( P = \langle p_0, p_1, \ldots, p_n \rangle \) where \( p_0 = a \), \( p_n = b \), and \( p_i \leq p_{i+1} \) for \( 0 \leq i \leq n-1 \), with \( n \in \mathbb{N}^* \). We refer to \( P \) as an untagged partition of the interval \([a, b]\), and its norm is defined as 
	
	\[
	\|P\| = \max \{ p_i - p_{i-1} : 1 \leq i \leq n \}.
	\]
	For untagged partitions $P,Q\in\mathbb{P}[a,b]$ we say that $P$ is better than $Q$, and denote by $Q\leq P$, if $||P||\leq ||Q||$. Intuitively, one partition is better than another if the chosen points are closer together.
	
	\begin{figure}[H]
		\centering
		\includegraphics[width=9cm]{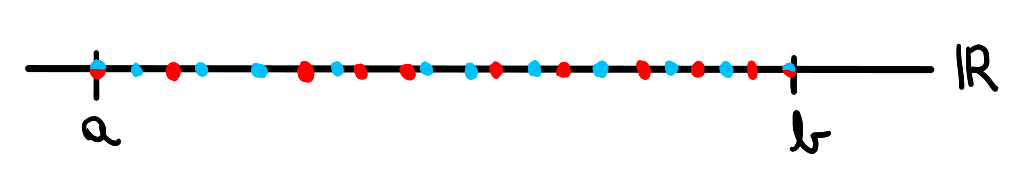}
		\caption{The partition in blue is better than the in red}
	\end{figure}
	A tag \( T \) of an untagged partition \( P = \langle p_0, p_1, \ldots, p_n \rangle \) is a finite sequence \( \langle t_1, \ldots, t_n \rangle \) such that \( p_i \leq t_i \leq p_{i+1} \) for \( 1 \leq i \leq n \). A tagged partition is a pair \( \langle P, T \rangle \), where \( P \) is an untagged partition of \([a, b]\) and \( T \) is a tag associated with the partition \( P \). We denote by \( \mathbb{P}^*[a, b] \) the set of tagged partitions of the interval \([a, b]\). The preorder in $\mathbb{P}^*[a,b]$ is such that $\langle P,T\rangle\leq \langle Q,T'\rangle$ if and only if $||Q||\leq ||P||$.
	
	\begin{figure}[H]
		\centering
		\includegraphics[width=9cm]{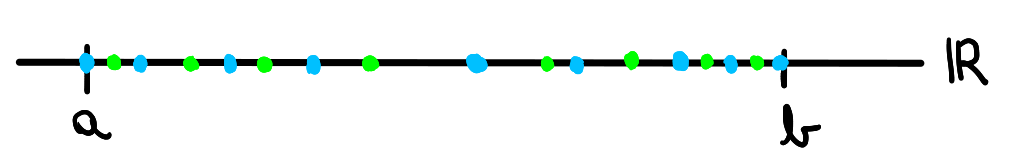}
		\caption{A tag in green of the partition in blue}
	\end{figure}
	Let \( X \) be a real vector space, and \( f: [a, b] \to X \) and \( \alpha: [a, b] \to \mathbb{R} \) be functions. For a tagged partition \( \langle P, T \rangle \in \mathbb{P}^*[a, b] \), we associated the Riemann-Stieltjes sum of \( f \) and \( \alpha \) over \( \langle P, T \rangle \)
	
	\[
	\sum_{\langle P, T \rangle} f, \alpha = \sum_{i=1}^n f(t_i) \cdot (\alpha(p_i) - \alpha(p_{i-1})).
	\]
	This defines a net
	\[
	\begin{array}{rcl}
		\int:\mathbb{P}^*[a, b] & \to & X \\ 
		\langle P, T \rangle & \mapsto & \sum_{\langle P, T \rangle} f, \alpha 
	\end{array}
	\]
	If $X$ have some notion of convergence or topology, when it exists, the limit of this net is called the Riemann-Stieltjes integral of \( f \) with respect to \( \alpha \) over the interval \([a, b]\). In  the case that $X=\mathbb{R}$ and $\alpha $ is the identity, this reduces to the Riemann integral.
\end{example}

\begin{remark}
The preorder on the directed set plays a crucial role in the convergence of a net. Consider the net \( \varphi: \mathbb{N}\setminus \{0\} \to \mathbb{R} \) defined by \( \varphi_n = (-1)^n \). Under the standard order on \( \mathbb{N}\setminus \{0\} \), this net does not converge. However, if we consider a different order on \( \mathbb{N}\setminus\{0\} \) such that \( n \leq m \) if and only if \( n \) divides \( m \), for any \( n \geq 2 \), it happens that \( 2 | n \). This means that \( \varphi_n = 1 \). Then, in this case, the net $\varphi$ converges to \( 1 \).
\end{remark}

The objective of the next three propositions is to demonstrate how the convergence of nets fully characterizes the open and closed sets of a topological space, as well as the continuity of functions. This understanding will enable us to generalize the concept of a topological space in Chapter \ref{chap2}.

\begin{proposition}
	\label{prop1.2.1}
	Let $ X $ be a topological space and $A\subseteq X$. The following are equivalent
	\begin{enumerate}
		\item $x\in\overline{A}$
		\item There is a net $\eta\in\textsc{Nets}(A)$ such that $\eta \to x$.
	\end{enumerate}
\end{proposition}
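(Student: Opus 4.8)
The plan is to prove the two implications separately, treating $(2 \implies 1)$ as the routine direction and $(1 \implies 2)$ as the one requiring an actual construction. Throughout I would use the standard fact (which I would recall first) that $x \in \overline{A}$ holds precisely when every neighborhood of $x$ meets $A$, i.e. $V \cap A \neq \emptyset$ for all $V \in \mathcal{N}_x$.

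For $(2 \implies 1)$, I would assume a net $\eta \colon \mathbb{D} \to A$ with $\eta \to x$ and fix an arbitrary $V \in \mathcal{N}_x$. By the definition of net convergence there is an index $d' \in \mathbb{D}$ with $\eta_d \in V$ for all $d \geq d'$; in particular $\eta_{d'} \in V$. Since $\eta$ takes its values in $A$, we have $\eta_{d'} \in V \cap A$, so $V \cap A \neq \emptyset$. As $V$ was arbitrary, this is exactly the condition $x \in \overline{A}$.

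For $(1 \implies 2)$, the idea is to build the witnessing net indexed by the neighborhood filter itself. I would take $\mathbb{D} = \mathcal{N}_x$ equipped with the preorder of reverse inclusion, $V \leq W \iff W \subseteq V$. This is a directed set: given $V, W \in \mathcal{N}_x$, their intersection $V \cap W$ again lies in $\mathcal{N}_x$ by the filter property (Example \ref{ex1.1}), and it satisfies $V \leq V \cap W$ and $W \leq V \cap W$. Because $x \in \overline{A}$, each $V \in \mathcal{N}_x$ has $V \cap A \neq \emptyset$, so I may choose a point $\eta_V \in V \cap A$ for every $V$; this yields a net $\eta \colon \mathcal{N}_x \to A$. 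To confirm convergence, I would take an arbitrary $U \in \mathcal{N}_x$: for every $V \geq U$, that is $V \subseteq U$, we have $\eta_V \in V \subseteq U$, so the tail $\{\eta_V : V \geq U\}$ is contained in $U$. Hence $\eta \to x$.

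The main obstacle, though a mild one, is the $(1 \implies 2)$ construction: one must verify that $\mathcal{N}_x$ under reverse inclusion is genuinely directed (this is precisely where the filter property of $\mathcal{N}_x$ is used) and acknowledge that selecting the points $\eta_V$ invokes the \emph{axiom of choice}. Once the index set is set up correctly, the verification that the resulting net converges to $x$ is immediate from the ordering. Everything in the reverse implication is then purely mechanical.
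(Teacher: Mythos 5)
Your proposal is correct and follows essentially the same route as the paper: the forward direction picks a point of a tail inside an arbitrary neighborhood, and the reverse direction builds a net indexed by $\mathcal{N}_x$ under reverse inclusion via the axiom of choice. Your write-up is in fact slightly more careful than the paper's, since you explicitly verify directedness of $\mathcal{N}_x$ and avoid the paper's notational clash of reusing the letter $A$ for both the subset and a neighborhood in the convergence check.
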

\begin{proof}
	
	($1\implies 2$) If $x\in \overline{A}$ it is an adherent point, for each neighborhood $V\in\mathcal{N}_x$ it happens that $V\cap A\neq\emptyset$. There is $x_V\in V\cap A$. The axiom of choice allows us to construct the net	$$\begin{array}{rcl}
		\Psi: \mathcal{N}_x & \to     & X \\ 
		V& \mapsto & 	x_V\\
	\end{array}$$
	where $\mathcal{N}_x$ is directed by reverse inclusion. Let $M$ be a neighborhood of $x$, if $A$ is also a neighborhood of $x$ such that $A\subseteq M$, we have $x_A\in M$. This proves that $\Psi\to x$ and $\Psi$ is a net in $A$ by construction.
	
	($2\implies 1$) Let $N$ be a neighborhood of $x$. Since there is a net  $\eta:\mathbb{D}\to A$ such that $\eta_d\to x$, there is $d'\in\mathbb{D}$ such that $\{\eta_d:d\geq d'\}\subseteq N$. It follows that $\eta_{d'}\in N\cap A$. This proves that $x\in \overline{A}$.
\end{proof}

\begin{proposition}
	Let $X$ be a topological space. A subset $A\subseteq X$ is open if and only if for every net $\varphi\in\textsc{Nets}(X)$ such that $\varphi\to x\in A$ there is $d'\in\hbox{dom}(\varphi)$ such that $\{\varphi_d:d\geq d'\}\subseteq A$.
	\label{open}
\end{proposition}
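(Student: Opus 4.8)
The plan is to prove both implications, using the definition of net convergence for the forward direction and a contrapositive argument anchored on Proposition~\ref{prop1.2.1} for the converse.

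For the forward direction, I would assume $A$ is open and that $\varphi \to x$ for some $x \in A$. Since $A$ is open and contains $x$, it is itself a neighborhood of $x$. The definition of net convergence then applies verbatim to the neighborhood $A$: there must exist $d' \in \hbox{dom}(\varphi)$ with $\{\varphi_d : d \geq d'\} \subseteq A$. This direction is essentially immediate and requires no construction.

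The converse is where the real work lies, and I would argue by contraposition. Suppose $A$ is \emph{not} open. Then some point $x \in A$ fails to be interior, meaning every neighborhood $V \in \mathcal{N}_x$ satisfies $V \cap (X \setminus A) \neq \emptyset$. This is precisely the statement that $x \in \overline{X \setminus A}$. Applying Proposition~\ref{prop1.2.1} to the set $X \setminus A$, I obtain a net $\eta \in \textsc{Nets}(X \setminus A)$ with $\eta \to x$. This net converges to a point of $A$, yet since every term $\eta_d$ lies in $X \setminus A$, no tail $\{\eta_d : d \geq d'\}$ can be contained in $A$. Thus the stated condition fails, which completes the contrapositive.

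The main obstacle is recognizing the equivalence ``$A$ not open $\iff$ some $x \in A$ lies in $\overline{X \setminus A}$,'' since this is what lets me translate failure of openness into the existence of an offending net. Once this reformulation is in place, Proposition~\ref{prop1.2.1} does the heavy lifting by producing the witnessing net in the complement, and the argument closes cleanly. No delicate net construction is needed here, as it has already been carried out in the adherence characterization.
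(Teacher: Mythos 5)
Your proof is correct and follows essentially the same route as the paper: the forward direction is immediate from the definition of convergence, and the converse is proved by contraposition via a net in $X\setminus A$ converging to a non-interior point $x\in A$. The only difference is that the paper constructs that net explicitly (indexing over $\mathcal{N}_x$ and choosing $x_V\in V\cap(X\setminus A)$), whereas you obtain the same net by observing that $x\in\overline{X\setminus A}$ and invoking Proposition~\ref{prop1.2.1}, which is a clean and legitimate shortcut since that proposition performs exactly this construction.
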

\begin{proof}
	The first implication follows from the definition of convergence of nets. We prove the converse by contrapositive. Suppose that $A\subseteq X$ is not open. There is $x\in A$ such that $x\notin \hbox{int}(A)$. This means that for every open set $V\subseteq X$ such that $x\in V$ we have $V\not\subseteq A$. It follows that there is $x_V\in V\cap (X\setminus A)$. Consider the net 

$$\begin{array}{rcl}
	\Psi: \mathcal{N}_x & \to     & X \\ 
	V& \mapsto & 	x_V\\
\end{array}$$ Let us show that $\Psi\to x$. If $A,M\in\mathcal{N}_x$ are neighborhoods of $x$ such that $A\subseteq M$, there is $U\in\mathcal{T}_x$ such that $U\subseteq A\subseteq M$.  It follows that $x_A\in V$. This proves that $\Psi$ converges to $x$. Note that $A$ does not contain tails of this net by construction.
\end{proof}

\begin{proposition}
	Let $ X$ and $Y$ be topological spaces. A function $f:X\to Y$ is continuous if and only if $f\circ \varphi\to f(x)$ whenever $\varphi\in\textsc{Nets}(X)$ is a net such that $\varphi\to x$.
	\label{continuity}
\end{proposition}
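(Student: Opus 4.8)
The plan is to prove the two implications separately, treating the forward direction as a direct unwinding of definitions and the converse by contraposition with an explicit net construction, in the same spirit as the preceding two propositions.

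For the forward implication, I would assume $f$ is continuous and take an arbitrary net $\varphi:\mathbb{D}\to X$ with $\varphi\to x$. Given any neighborhood $V\in\mathcal{N}_{f(x)}$, continuity provides an open set $U$ with $f(x)\in U\subseteq V$ and $f^{-1}[U]$ open containing $x$; hence $f^{-1}[U]\in\mathcal{N}_x$. Since $\varphi\to x$, some tail $\{\varphi_d:d\geq d'\}$ lies inside $f^{-1}[U]$, and applying $f$ sends this tail into $U\subseteq V$. As $V$ was arbitrary, every neighborhood of $f(x)$ contains a tail of $f\circ\varphi$, i.e.\ $f\circ\varphi\to f(x)$. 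This is essentially the tail-phrased analogue of the filter argument in Proposition \ref{filter}.

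For the converse, I would argue by contraposition, reusing the net constructions already appearing in Propositions \ref{prop1.2.1} and \ref{open}. If $f$ fails to be continuous, there is a point $x$ and a neighborhood $V\in\mathcal{N}_{f(x)}$ whose preimage $f^{-1}[V]$ is not a neighborhood of $x$; equivalently, for every $U\in\mathcal{N}_x$ we have $U\not\subseteq f^{-1}[V]$, so by the axiom of choice we may pick $x_U\in U$ with $f(x_U)\notin V$. Directing $\mathcal{N}_x$ by reverse inclusion, the assignment $U\mapsto x_U$ defines a net $\Psi:\mathcal{N}_x\to X$, and the usual neighborhood-filter argument shows $\Psi\to x$. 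By construction, however, $f(x_U)\notin V$ for every index, so no tail of $f\circ\Psi$ lies in $V$; hence $f\circ\Psi\not\to f(x)$, contradicting the hypothesis. An alternative route would invoke a closure characterization: Proposition \ref{prop1.2.1} lets me witness $x\in\overline{A}$ by a net $\eta$ in $A$, whose image $f\circ\eta$ then witnesses $f(x)\in\overline{f[A]}$, yielding $f[\overline{A}]\subseteq\overline{f[A]}$ and thus continuity.

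The main obstacle lies entirely in the converse: one must extract a genuine failure of continuity \emph{at a single point} from the negation of the net condition, and then build a net that provably converges to $x$ while its image stays outside a fixed neighborhood of $f(x)$. Verifying that $\Psi\to x$ — even though the $x_U$ are chosen adversarially so as to keep $f(x_U)$ away from $V$ — is the delicate bookkeeping step, and it is precisely where the reverse-inclusion direction on $\mathcal{N}_x$ does the work.
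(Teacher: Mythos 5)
Your proposal is correct, and the forward direction coincides with the paper's argument (pull back an open set around $f(x)$, capture a tail of $\varphi$, push it forward). The converse, however, takes a genuinely different route. The paper argues directly: given an open $V\subseteq Y$, it takes an arbitrary net $\varphi\to x\in f^{-1}[V]$, uses the hypothesis to get $f\circ\varphi\to f(x)$, extracts a tail of $f\circ\varphi$ inside $V$, pulls that tail back into $f^{-1}[V]$, and concludes that $f^{-1}[V]$ is open by invoking Proposition \ref{open}. You instead argue by contraposition, localizing the failure of continuity at a single point $x$ and a neighborhood $V$ of $f(x)$, and then building the adversarial net $\Psi:\mathcal{N}_x\to X$, $U\mapsto x_U$ with $f(x_U)\notin V$, which converges to $x$ while $f\circ\Psi$ avoids $V$. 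Both are valid; the paper's version is shorter because it delegates the choice-based net construction entirely to Proposition \ref{open} and never needs to negate the definition of continuity, whereas your version is more self-contained and has the side benefit of showing that the net condition characterizes continuity \emph{at a point}, not just global continuity. Your suggested alternative via $f[\overline{A}]\subseteq\overline{f[A]}$ also works and would lean on Proposition \ref{prop1.2.1} instead, but it is the furthest from what the paper does. One small caution: in the contrapositive you should make explicit that $\mathcal{N}_x$ under reverse inclusion is directed (closure of neighborhoods under finite intersection), since that is what licenses calling $\Psi$ a net; the paper's own constructions in Propositions \ref{prop1.2.1} and \ref{open} rely on the same fact.
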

\begin{proof}
		Suppose that $f$ is continuous. Let $\varphi\in\textsc{Nets}(X)$ be a net such that $\varphi\to x$, for every open set $V\subseteq Y$ such that $f(x)\in V$, since $f$ is continuous, it happens that $f^{-1}[V]$ is open and $x\in f^{-1}[V]$. Since $\varphi\to x$, there is $d'\in\hbox{dom}(\varphi)$ such that $\{\varphi_d:d\geq d'\}\subseteq f^{-1}[V]$. Then $$f(\{\varphi_d:d\geq d'\})\subseteq\{(f\circ\varphi_d:d\geq d')\}\subseteq V$$ This proves that $f\circ \varphi\to f(x)$. Conversely, if $V\subseteq X$ is an open set, let us show that $f^{-1}[V]$ is open. Given a net $\varphi\in\textsc{Nets}(X)$ such that $\varphi\to x$ and $x\in f^{-1}[V]$, it follows by hypotheses that  $f\circ \varphi\to f(x)$.  There is $d'\in\hbox{dom}(f\circ\varphi)$ such that $\{(f\circ \varphi)_d:d\geq d'\}\subseteq V$. It follows that $\{\varphi_d:d\geq d'\}\subseteq f^{-1}[V]$ and, therefore, $f^{-1}[V]$ is open by Proposition \ref{open}. This proves that $f$ is continuous.
	
\end{proof}

In a natural way, we induce a filter \( \varphi^{\uparrow} \) from a net \( \varphi \), where \( \varphi^{\uparrow} \) consists of the sets that contain a tail set of the net \( \varphi \), i.e., a subset $\{\varphi_d:d\geq d'\}$ for some $d'\in\hbox{dom}(\varphi)$. This correspondence defines a class function \label{symbol:netFilter}

$$\begin{array}{rcl}
	(\bullet)^{\uparrow}: \textsc{Nets}(X)& \to     & \textsc{Fil}^*(X) \\ 
	\varphi& \mapsto & 	\varphi^{\uparrow}\\
\end{array}$$  

A question arises: Is this class function onto, that is, is every proper filter induced by a net? The reader may reflect for a moment, but the next proposition gives us an answer.

\begin{proposition}
	\label{prop124}
	The class function $(\bullet)^{\uparrow}: \textsc{Nets}(X)\to\textsc{Fil}^*(X) $ is onto.
\end{proposition}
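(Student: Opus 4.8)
The plan is to prove surjectivity pointwise: given an arbitrary proper filter $\mathcal{F}\in\textsc{Fil}^*(X)$, I will manufacture a single net whose induced filter is exactly $\mathcal{F}$. The natural device is to let the filter index itself. Concretely, I would set
$$\mathbb{D}=\{\apair{x,F}: F\in\mathcal{F}\text{ and }x\in F\},$$
preordered by declaring $\apair{x,F}\leq\apair{y,G}$ precisely when $G\subseteq F$ (so that smaller filter members count as ``later'' indices), and take the net to be the obvious projection $\varphi:\mathbb{D}\to X$, $\apair{x,F}\mapsto x$.

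First I would verify that $\langle\mathbb{D},\leq\rangle$ is a directed set. Reflexivity and transitivity are inherited from those of $\subseteq$. For directedness, given $\apair{x,F}$ and $\apair{y,G}$, the filter axioms give $F\cap G\in\mathcal{F}$, and since $\mathcal{F}$ is \emph{proper} we have $\emptyset\notin\mathcal{F}$, so $F\cap G\neq\emptyset$; choosing any $z\in F\cap G$ yields an upper bound $\apair{z,F\cap G}$, because $F\cap G\subseteq F$ and $F\cap G\subseteq G$. This is the one spot where properness is essential — without it the intersection could be empty and no witness $z$ would exist — which is exactly the point flagged in the earlier remark excluding non-proper filters.

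It then remains to check $\varphi^{\uparrow}=\mathcal{F}$, which I would do by two inclusions. For $\mathcal{F}\subseteq\varphi^{\uparrow}$, fix $F\in\mathcal{F}$; properness supplies some $x_0\in F$, and I claim $F$ contains the tail past $\apair{x_0,F}$: indeed any $\apair{x,G}\geq\apair{x_0,F}$ satisfies $G\subseteq F$, so $\varphi(\apair{x,G})=x\in G\subseteq F$, whence $F\in\varphi^{\uparrow}$. For the reverse inclusion, take $A\in\varphi^{\uparrow}$, so $A$ contains a tail beyond some index $\apair{x_0,F_0}$. For every $y\in F_0$ the pair $\apair{y,F_0}$ lies in $\mathbb{D}$ and dominates $\apair{x_0,F_0}$, hence $y=\varphi(\apair{y,F_0})\in A$; thus $F_0\subseteq A$, and since $F_0\in\mathcal{F}$ and filters are upward closed, $A\in\mathcal{F}$.

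I do not anticipate a serious obstacle beyond bookkeeping: the construction is essentially forced, and the only genuinely load-bearing hypothesis is properness of $\mathcal{F}$, which enters twice (to guarantee directedness and to select the witnesses $x_0$). The mildly delicate point is keeping the orientation of the preorder straight — here ``later'' means ``smaller filter member'' — since reversing it would destroy directedness and break the tail computations above.
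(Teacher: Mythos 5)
Your construction is exactly the one in the paper: the directed set of pairs $\apair{x,F}$ with $x\in F$, ordered by reverse inclusion on the second coordinate, with the projection net; the two tail-set inclusions you give match the paper's argument. Your proposal is, if anything, slightly more thorough, since it spells out the directedness verification and the role of properness, which the paper leaves implicit.
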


\begin{proof}
	For each proper filter $\mathcal{F}\in\textsc{Fil}^*(X)$, consider the set $$\mathbb{D}_{\mathcal{F}}=\{\langle x,F\rangle \in X\times\mathcal{F}:x\in F\}$$
	Declare that $\langle x,F\rangle \leq \langle y,G\rangle $ if and only if $G\subseteq F$. Note that $\langle \mathbb{D}_{\mathcal{F}},\leq \rangle$ is a directed set. The net	$$\begin{array}{rcl}
		\Gamma: \mathbb{D}_{\mathcal{F}}& \to     & X \\ 
		\langle x,F\rangle & \mapsto & 	x\\
	\end{array}$$ is such that $\Gamma^{\uparrow} = \mathcal{F}$. Indeed, if \( F \in \mathcal{F} \), since \( F \neq \emptyset \), there is \( x \in F \). Then \( \langle x,F\rangle \in \mathbb{D}_{\mathcal{F}} \). Notice that $$ \{\Gamma\langle y,G\rangle  = y : \langle y,G\rangle  \geq \langle x,F\rangle \} \subseteq F$$ It follows that \( F \in \Gamma^{\uparrow} \). In other words, $\mathcal{F}\subseteq\Gamma^{\uparrow}$. On other hand, if \( F \in \Gamma^{\uparrow} \), there is \( \langle y,G\rangle  \in \mathbb{D}_{\mathcal{F}} \) such that \( \{\Gamma\langle x,H\rangle  = x : \langle x,H\rangle  \geq \langle y,G\rangle \} \subseteq F \). Then $$G \subseteq \{\Gamma\langle x,H\rangle  = x : \langle x,H\rangle \geq \langle y,G\rangle \}\subseteq F$$ It follows that \( F \in \mathcal{F} \). In other words $\Gamma^{\uparrow}\subseteq\mathcal{F}$. This proves that \( \Gamma^{\uparrow} = \mathcal{F} \).
\end{proof}

 This means that we have flexibility to work with filter or nets. Consequently, it will be common from now on to utilize either nets or filters in our arguments, with the choice depending on which approach provides greater clarity and intuition. Recall that, basically, a subsequence is a new sequence derived from a sequence by selecting some, but not all, elements from this sequence. In the context of nets, we have the concept of subnet.

\begin{definition}
	Let $\varphi$ and $\psi$  be nets in $X$. We say that $\psi$ is a \textbf{subnet }of $\varphi$ if $\varphi^{\uparrow}\subseteq\psi^{\uparrow}$, that is, for all $a\in\hbox{dom}(\varphi)$ there is $b'\in\hbox{dom}(\psi)$ such that $\{\psi_b:b\geq b'\}\subseteq \{\varphi_d:d\geq a \}$.
\end{definition} There are several non-equivalent definitions of subnets, but the most common one, by Stephen Willard, defines a subnet as a net that can be mapped ``cofinally" into another net through a monotonic function. The definition of subnet used in this text was given by Aarnes and Andenaes. For more details see~\cite{schechter}.

\begin{proposition}

	Let $X$ be a topological space and $\varphi\in\textsc{Nets}(X)$ be a net such that $\varphi\to x$. If $\psi\in\textsc{Nets}(X)$ is a subnet of $\varphi$, then  $\psi\to x$.
	
	\label{subnet}
\end{proposition}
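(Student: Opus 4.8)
The plan is to exploit the fact that both the hypothesis and the subnet relation are, at bottom, statements about the induced filters $\varphi^{\uparrow}$ and $\psi^{\uparrow}$, so that the conclusion follows from nothing more than the transitivity of set inclusion. First I would record the translation of net convergence into filter language: a net $\varphi \to x$ precisely when every neighborhood of $x$ contains a tail of $\varphi$, i.e. when $V \in \varphi^{\uparrow}$ for each $V \in \mathcal{N}_x$, which is to say $\mathcal{N}_x \subseteq \varphi^{\uparrow}$. This is just the observation that convergence of the net $\varphi$ coincides with convergence of the induced filter $\varphi^{\uparrow}$ in the sense that $\mathcal{N}_x \subseteq \varphi^{\uparrow}$.

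With that in hand the argument is a one-line chase. By hypothesis $\varphi \to x$, so $\mathcal{N}_x \subseteq \varphi^{\uparrow}$. By the definition of subnet, $\varphi^{\uparrow} \subseteq \psi^{\uparrow}$. Combining these inclusions gives $\mathcal{N}_x \subseteq \varphi^{\uparrow} \subseteq \psi^{\uparrow}$, hence $\mathcal{N}_x \subseteq \psi^{\uparrow}$, which says exactly that $\psi \to x$.

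Alternatively, for a reader who prefers to stay with tails rather than filters, the same reasoning can be run directly: given a neighborhood $V$ of $x$, convergence of $\varphi$ produces an index $a \in \hbox{dom}(\varphi)$ with $\{\varphi_d : d \geq a\} \subseteq V$, and the subnet condition then supplies an index $b' \in \hbox{dom}(\psi)$ with $\{\psi_b : b \geq b'\} \subseteq \{\varphi_d : d \geq a\} \subseteq V$, so that $V$ contains a tail of $\psi$. Since $V$ was arbitrary, $\psi \to x$.

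There is no genuine obstacle here; the entire content has been absorbed into the filter-inclusion formulation of the subnet relation, which is precisely why defining subnets by $\varphi^{\uparrow} \subseteq \psi^{\uparrow}$ is convenient. The only point deserving a moment of care is confirming that this inclusion really does yield the tail containment used in the direct argument, but that is immediate from the definition of $\varphi^{\uparrow}$, whose members are exactly the supersets of tails of $\varphi$.
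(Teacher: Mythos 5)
Your proof is correct and follows essentially the same route as the paper's: both reduce convergence to the filter inclusion $\mathcal{N}_x\subseteq\varphi^{\uparrow}$ and then chain it with the subnet condition $\varphi^{\uparrow}\subseteq\psi^{\uparrow}$. The additional tail-based rephrasing you give is a faithful unpacking of the same argument, not a different one.
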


\begin{proof}
	If $\varphi\to x$ , it happens that $\mathcal{N}_x\subseteq \varphi^{\uparrow}$. Since $\varphi^{\uparrow}\subseteq \psi^{\uparrow}$, we conclude that $\mathcal{N}_x\subseteq \psi^{\uparrow}$. This means that $\psi\to x$.
\end{proof}

\begin{figure}[H]
	\centering
	\includegraphics[width=6cm]{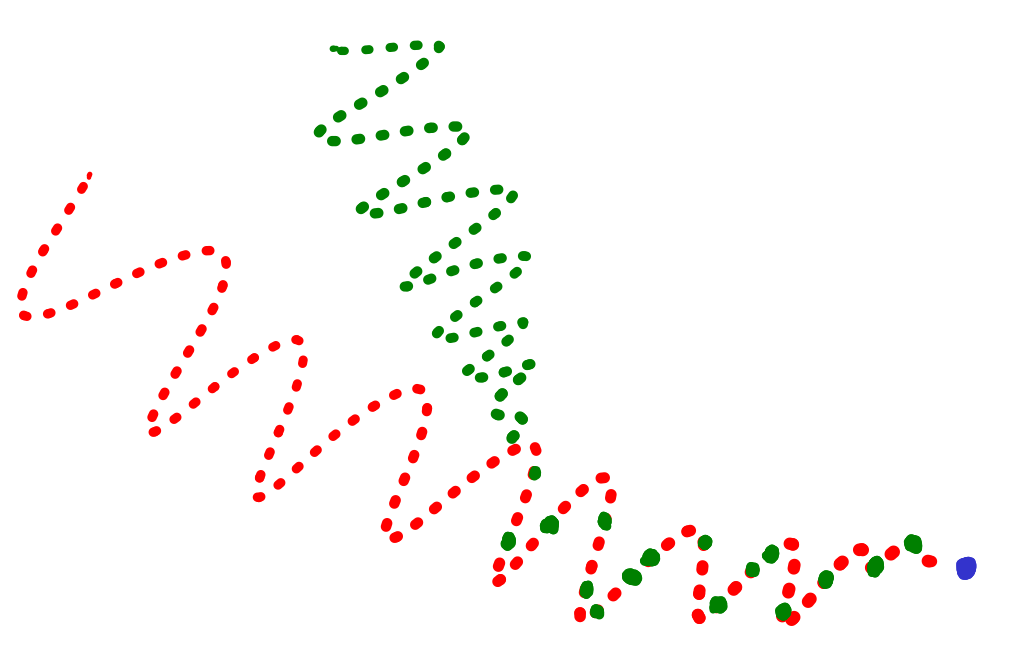}
	
	\caption{A subnet(green) of a net(red) converging to the same point}
\end{figure}

Given two sequences $\langle x_n\rangle_n$ and $\langle y_n\rangle_n$, we can create a sequence $\langle z_n\rangle_n$ such that $z_{2n}=x_n$ and $z_{2n+1}=y_n$ for all natural number $n\in\mathbb{N}$. Notice that this was created just by choosing terms from the two sequences we initially considered. This motivates us to introduce the concept of mixing nets.

\begin{definition}
Let $\varphi,\psi \in\textsc{Nets}(X)$ be nets in $X$ with the same domain. A  net $\rho$ in $X$ is a \textbf{mixing} of $\varphi$ and $\psi$ if it has the same domain and $\rho_d\in\{\varphi_d,\psi_d\}$ for every $d\in\hbox{dom}(\varphi)$.
	\label{mixing}\label{symbol:domainNet}
\end{definition}
\begin{remark}
	We could define mixing of nets with different domains, but if we consider the nets 
	
	$$	\begin{array}{rcl}
		\tilde{\varphi}: \hbox{dom}(\varphi)\times \hbox{dom}(\psi) & \to     & X\\ 
		\langle x,y\rangle  & \mapsto & \varphi_x \\
	\end{array}$$
		and 
		$$	\begin{array}{rcl}
		\tilde{\psi}: \hbox{dom}(\varphi)\times \hbox{dom}(\psi) & \to     & X\\ 
		\langle x,y\rangle  & \mapsto & \psi_y \\
	\end{array}$$
	we have $\tilde{\varphi}^{\uparrow}=\varphi^{\uparrow}$ and $\tilde{\psi}^{\uparrow}=\psi^{\uparrow}$, where the cartesian product of directed sets $\mathbb{A}$ and $\mathbb{B}$ is such that $\langle a,b\rangle\leq\langle a',b'\rangle$ if and only if $a\leq a'$ and $b\leq b'$. In other words, they are ``equivalent" nets, in the sense of inducing the same filter, and without loss of generality, we can consider that $\varphi$ and $\psi$ have the same domain.
	\label{symbol:pair}
\end{remark}

\begin{remark}

At its core, the mixing of nets is related to the intersection of filters. Let $\varphi,\psi$ and $\rho$ be nets, where $\rho$ is a mixing of $\varphi$ and $\psi$, if $A\in\varphi^{\uparrow}\cap\psi^{\uparrow}$ there are $d_0,d_1\in\hbox{dom}(\varphi)$ such that $$\{\varphi_d:d\geq d_0\},\{\psi_d:d\geq d_1\}\subseteq A$$For  $d_2\in\hbox{dom}(\varphi)$ such that $d_2\geq d_0,d_1$, it happens that $$\{\rho_d:d\geq d_2\}\subseteq \{\varphi_d:d\geq d_2\}\cup\{\psi_d:d\geq d_2\}\subseteq A$$
It follows that $A\in\rho^{\uparrow}$. This proves that $\varphi^{\uparrow}\cap\psi^{\uparrow}\subseteq\rho^{\uparrow}$. Then a mixing is a subnet of every net inducing the intersection of filters.
\end{remark}

\begin{proposition}
	Let $\psi$ and $\varphi$ are nets in $X$ such that $\varphi,\psi\to x$. A mixing of $\psi$ and $\varphi$ also converges to $x$.
\end{proposition}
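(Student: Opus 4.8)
The plan is to reduce everything to the neighborhood-filter characterization of convergence and then quote the remark immediately preceding the statement. Recall that in a topological space a net $\eta$ converges to $x$ precisely when $\mathcal{N}_x\subseteq\eta^{\uparrow}$; this is the same reformulation already exploited in the proof of Proposition~\ref{subnet}. So the whole argument should collapse to a short chain of filter inclusions, with the mixing net $\rho$ entering only through the inclusion $\varphi^{\uparrow}\cap\psi^{\uparrow}\subseteq\rho^{\uparrow}$ established in the remark just above.

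Concretely, first I would note that, by the definition of mixing, $\rho$ shares the common domain of $\varphi$ and $\psi$ and satisfies $\rho_d\in\{\varphi_d,\psi_d\}$ for every $d$. Next I would translate the two hypotheses $\varphi\to x$ and $\psi\to x$ into $\mathcal{N}_x\subseteq\varphi^{\uparrow}$ and $\mathcal{N}_x\subseteq\psi^{\uparrow}$, and intersect them to obtain $\mathcal{N}_x\subseteq\varphi^{\uparrow}\cap\psi^{\uparrow}$. Then I would invoke the preceding remark, which gives $\varphi^{\uparrow}\cap\psi^{\uparrow}\subseteq\rho^{\uparrow}$, so that $\mathcal{N}_x\subseteq\rho^{\uparrow}$, i.e.\ $\rho\to x$. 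This is the cleanest route since it reuses machinery already developed in the section.

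There is essentially no serious obstacle here; the only point requiring a little care is making sure the common upper bound in the directed domain is handled correctly, which is exactly the content of the remark's computation. If one prefers to argue directly without citing the remark, the alternative is to fix a neighborhood $V\in\mathcal{N}_x$, choose $d_0$ with $\{\varphi_d:d\geq d_0\}\subseteq V$ and $d_1$ with $\{\psi_d:d\geq d_1\}\subseteq V$, pick $d_2\geq d_0,d_1$ by directedness, and observe that for $d\geq d_2$ we have $\rho_d\in\{\varphi_d,\psi_d\}\subseteq V$, whence $\{\rho_d:d\geq d_2\}\subseteq V$. Either way the proof is routine, so I would favor the filter-inclusion version for brevity and to emphasize that mixing is dual to filter intersection.
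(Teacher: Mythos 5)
Your proposal is correct and follows exactly the paper's own argument: convert $\varphi,\psi\to x$ into $\mathcal{N}_x\subseteq\varphi^{\uparrow}$ and $\mathcal{N}_x\subseteq\psi^{\uparrow}$, then apply the inclusion $\varphi^{\uparrow}\cap\psi^{\uparrow}\subseteq\rho^{\uparrow}$ from the preceding remark to get $\mathcal{N}_x\subseteq\rho^{\uparrow}$, i.e.\ $\rho\to x$. The direct $\varepsilon$-style alternative you sketch is also fine but unnecessary, since the filter-inclusion route is precisely what the paper does.
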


\begin{proof}
If $\psi,\varphi\to x$ and $\rho$ is a mixing of $\varphi$ and $\psi$, it happens that $\mathcal{N}_x\subseteq \varphi^{\uparrow}$ and $\mathcal{N}_x\subseteq\psi^{\uparrow}$. Since $\varphi^{\uparrow}\cap\psi^{\uparrow}\subseteq\rho^{\uparrow}$, it follows that $\mathcal{N}_x\subseteq\rho^{\uparrow}$. This proves that $\rho\to x$.
\end{proof}

	\begin{figure}[H]
		\centering
	\includegraphics[width=13cm]{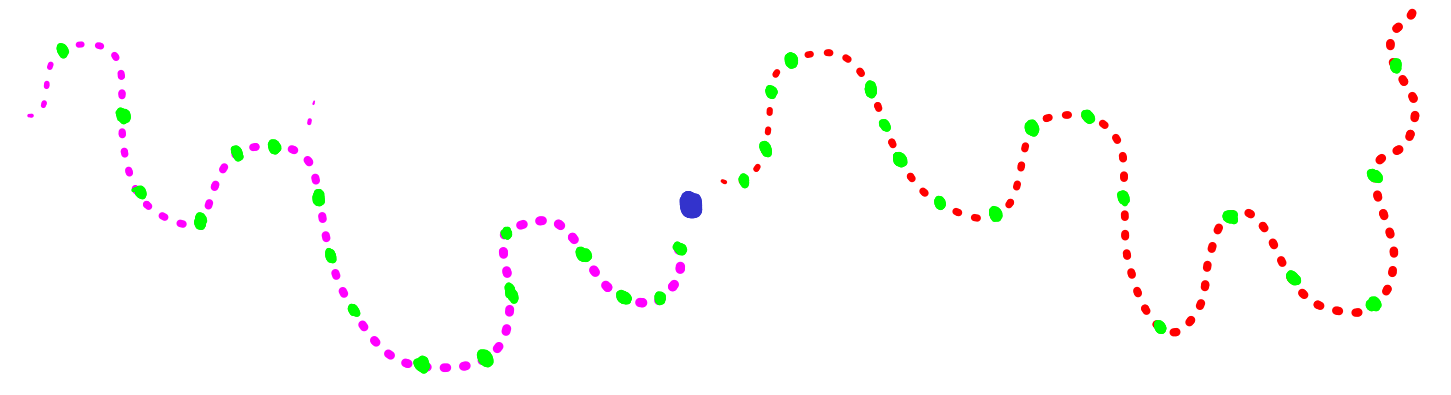}
	\label{key2}
	\caption{Two nets (pink and red) converging to the same point (blue) and their mixing (green) also converging to this point. There is a stability in some sense, the choice did not mess up the convergence.}
\end{figure}

\newpage

\

\newpage

\chapter{A crash course in convergence spaces}
\label{chap2}

In this chapter, we introduce the convergence spaces, which will be our main subject of study. The motivation behind this comes from the discussions made in the previous chapter on how convergence of nets and filters fully characterized the topology of a space. Our principal references are~\cite{ref4},~\cite{schechter1996handbook},~\cite{ref3} and~\cite{schechter}, that use filters to describe convergence spaces. Although filters are a good tool for convergence theory, for those who are just starting to study this area, the notation is complicated. It is necessary to perform many operations with families of sets, and in this process, much geometric intuition is lost. The goal is to provide an alternative approach to convergence theory using nets.

\section{Preconvergences, convergences and limit spaces}

\begin{definition}
	A function $$L:\textsc{Fil}^*(X)\to \mathcal{P}(X)$$ is a \textbf{preconvergence} on a set $X$ and the pair $\langle X, L\rangle $ is said to be a \textbf{preconvergence space}. We write $\mathcal{F}\to_{L}x$ whenever $\mathcal{F}\in\textsc{Fil}^*(X)$ and $x\in L(\mathcal{F})$ and say that the filter $\mathcal{F}$ $L$-converges to $x$.
\end{definition}

\begin{remark}
	This is the same definition presented by Dolecki and Mynard in~\cite{schechter1996handbook}. Schecter~\cite{schechter} adapted this definition for nets, but we will delve deeper into the discussion by addressing adherence, inherence, etc., with nets.
\end{remark}

Intuitively, a preconvergence associates to each proper filter its set of limit points. Many authors define a preconvergence as a relation $L\subseteq\textsc{Fil}^*(X)\times X$, as in~\cite{ref4}. This is possible because there is a bijection between the set $\hbox{Rel}(\textsc{Fil}^*(X)\times X)$ of relations on $\textsc{Fil}^*(X)\times X$ and the set $\hbox{Fun}(\textsc{Fil}^*(X),\mathcal{P}(X))$ of functions with domain $\textsc{Fil}^*(X)$ and codomain $\mathcal{P}(X)$. But in this work, a preconvergence $L$ on a set $X$ is a class function $$L:\textsc{Nets}(X)\to\mathcal{P}(X)$$ such that $L(\varphi)=L(\psi)$ whenever $\varphi^{\uparrow}=\psi^{\uparrow}$.  We write $\varphi\to_L x$ instead $x\in L(\varphi)$. How can we do this? In Section \ref{sec1.2} we define a class function that associates a net to its induced filter

$$\begin{array}{rcl}
(\bullet)^{\uparrow}: \textsc{Nets}(X)& \to     & \textsc{Fil}^*(X) \\ 
\varphi & \mapsto & 	\varphi^{\uparrow}\\
\end{array} $$ If a preconvergence is a function $L:\textsc{Fil}^*(X)\to\mathcal{P}(X)$, we have the commutative diagram
\[\begin{tikzcd}[ampersand replacement=\&,cramped]
	{\textsc{Nets}(X)} \& {\textsc{Fil}^*(X)} \\
	\& {\mathcal{P}(X)}
	\arrow["{(\bullet)^{\uparrow}}", from=1-1, to=1-2]
	\arrow["{L\circ(\bullet)^{\uparrow}}"', from=1-1, to=2-2]
	\arrow["L", from=1-2, to=2-2]
\end{tikzcd}\]
This means that there is class function $L\circ (\bullet)^{\uparrow}:\textsc{Nets}(X)\to\mathcal{P}(X)$ such that $L(\varphi)=L(\psi)$ whenever $\varphi^{\uparrow}=\psi^{\uparrow}$. On the other hand, if a preconvergence is a class function $L:\textsc{Nets}(X)\to\mathcal{P}(X)$, recall that Proposition \ref{prop124} tells us that there is a right inverse $\Gamma(\bullet):\textsc{Fil}^*(X)\to\textsc{Nets}(X)$ of $(\bullet)^{\uparrow}$. Then there is a function $\textsc{Fil}^*(X)\to\mathcal{P}(X)$ by the commutativity of the diagram
\[\begin{tikzcd}[ampersand replacement=\&,cramped]
	{\textsc{Fil}^*(X)} \& {\textsc{Nets}(X)} \\
	\& {\mathcal{P}(X)}
	\arrow["{\Gamma(\bullet)}", from=1-1, to=1-2]
	\arrow["{L\circ \Gamma(\bullet)}"', from=1-1, to=2-2]
	\arrow["L", from=1-2, to=2-2]
\end{tikzcd}\]

\begin{example}
	The class function $\hat{\emptyset}:\textsc{Nets}(X)\to\mathcal{P}(X)$ such that  $\hat{\emptyset}(\varphi)=\emptyset$  for every net $\varphi\in\textsc{Nets}(X)$ is called the empty preconvergence on $X$. On the other hand, the class function $c_X:\textsc{Nets}(X)\to \mathcal{P}(X)$ such that $c_X(\varphi)=X$ for every $\varphi\in\textsc{Nets}(X)$ is called the chaotic preconvergence. Clearly, there is no theoretical interest behind these convergences; after all, what's the point with a net that does not converge to any point or that converges to every point? The important thing here is to see how the concept of convergence is becoming very general, to the point where these pathological convergences are allowed. For the real line and plane, we denote their corresponding preconvergences by $\to_{\mathbb{R}}$ and $\to_{\mathbb{R}^2}$. \label{symbol:limTau}\label{symbol:convR}\label{symbol:convR2}
\end{example}

\begin{example}
	A topological space $\langle X,\tau\rangle$ has a natural preconvergence induced by definition of convergence of nets. We denote this preconvergence by $\lim_\tau$. Unless otherwise mentioned, whenever we refer to topological spaces as preconvergence spaces, it is this preconvergence that we are considering.
\end{example}

\begin{definition}
A preconvergence $L$ on $X$ is
\begin{enumerate}
		\item \textbf{centered}, if every constant net converges to its image,
	\item\textbf{isotone}, if $\mathcal{\psi}\to_L x$ whenever $\psi$ is a subnet of $\varphi$ and $\varphi\to_L x $,
	\item \textbf{stable}, if $\rho\to_L x$ whenever $\varphi,\psi\to_L x$ and $\rho$ is a mixing of $\varphi$ and $\psi$.
\end{enumerate}
If $L$ is centered and isotone, we say that $L$ is a convergence and the pair $\langle X, L\rangle$ is a \textbf{convergence space}. A convergence  $L$ that is stable is said to be a limit convergence and $\langle X,L\rangle$ is a \textbf{limit space}.
\end{definition} In the context of filters, centerness refers to the convergence of principal ultrafilters, while isotonicity ensures that $L$ is an increasing function with respect to the inclusion relation and keeps Proposition \ref{subnet} true. Stability relates to the convergence of finite intersections of filters. These are desirable properties that we expect a preconvergence to possess. Even if a preconvergence does not possess some of these properties, it is possible to modify it so that they are recovered, as we will see in Section \ref{modifiers}.
\begin{example}
	Follows from Propositions \ref{mixing} and \ref{subnet} that topological spaces are limit spaces. 
\end{example}

\begin{example}[Adapted from~\cite{marroquin}]
	\label{circle}
 Fix a real number $r>0$. In the circle $\mathbb{S}^1$ consider a preconvergence $L_r$ such that a net $\varphi\in\textsc{Nets}(\mathbb{S}^1) $ $L_r$-converges to $x\in \mathbb{S}^1$ if and only if there are $x_0,\cdots, x_n\in\mathbb{S}^1$ such that $ \{x_0,\cdots,x_n\}^{\uparrow}\subseteq \varphi^{\uparrow}$ and $||x_i-x||\leq r$ for every $0\leq i\leq n$, where $||\cdot||$ denotes the usual euclidean distance. Let us show that $\langle \mathbb{S}^1,L_r\rangle$ is a limit space. Clearly this preconvergence is centered and isotone. For stability, if $\varphi,\psi\to_{L_r} x$ there are $x_0,\cdots,x_n,y_0,\cdots, y_m\in\mathbb{S}^1$ such that $||x_i-x||,||y_j,x||\leq r$ for $0\leq i\leq n$ and $0\leq j \leq m$, $ \{x_0,\cdots,x_n\}^{\uparrow}\subseteq \varphi^{\uparrow}$ and $ \{y_0,\cdots,y_m\}^{\uparrow}\subseteq \psi^{\uparrow}$. Notice that $$\{x_0,x_1,\cdots,x_n,y_0,y_1,\cdots,y_m\}^{\uparrow}\subseteq \psi^{\uparrow}\cap \varphi^{\uparrow}$$ Since $\varphi^{\uparrow}\cap\psi^{\uparrow}\subseteq\rho^{\uparrow}$ for a mixing $\rho$ of $\varphi$ and $\psi$, it follows that $\rho\to_{L_r} x$. But what is the intuition about this limit convergence? The idea is that, from a certain point onward, the net has only a finite number of points as its image that are close to limit point.
	\begin{figure}[H]
		\centering
		\includegraphics[width=6cm]{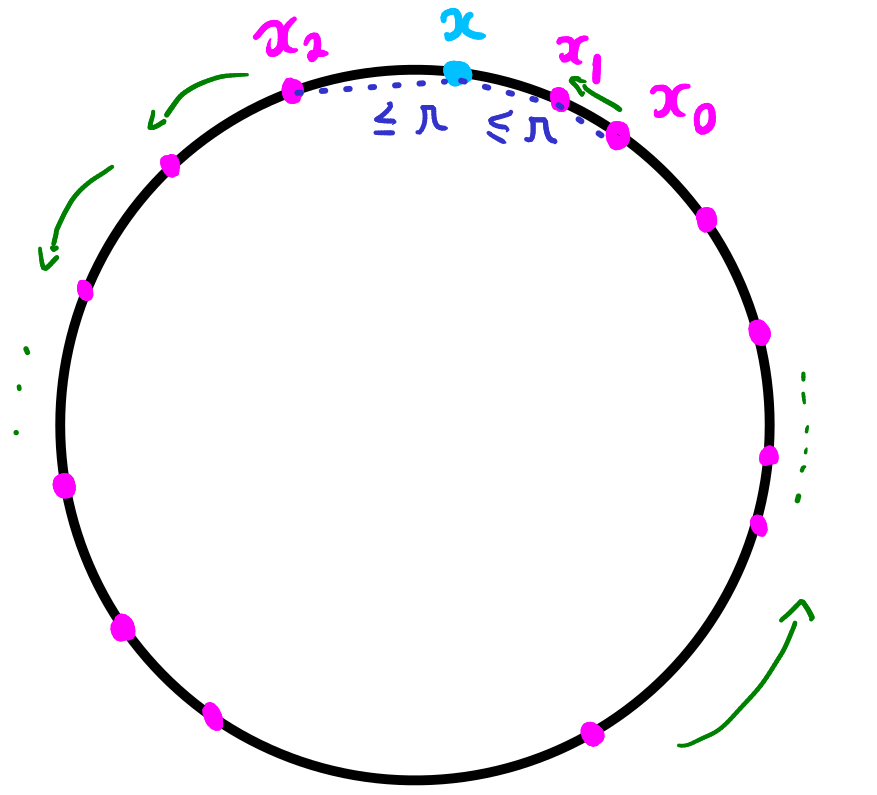}
		\label{key3}
		\caption{Consider a net (in pink) that starts at $x_0$
			and moves in a counterclockwise direction, and after the first cycle, it only takes the values 
		$x_0,x_1$ or $x_2$. In this case, the convergence is witnessed by the points $x_0,x_1$ and $x_2$.}
	\end{figure}
\end{example}

\begin{example}[Adapted from~\cite{ref5}]
	\label{215}
	In the unit interval $[0,1]$ we define a preconvergence $L$ such that $\varphi\to_L x$ if and only if 	$x\neq\frac{1}{2}$ and  $\varphi\to_{\mathbb{R}} x $ or $x=\frac{1}{2}$ and there is a net  $\psi\in\textsc{Nets}([0,\frac{1}{2}])\cup\textsc{Nets}([\frac{1}{2},1])$ such that $\psi \to_{\mathbb{R}}\frac{1}{2}$ and $\psi ^{\uparrow}\subseteq \varphi^{\uparrow} $ or $\varphi^{\uparrow}=\mathfrak{u}_{\frac{1}{2}} $. We show that this preconvergence is centered and isotone, but it is not stable. 
	
	\begin{enumerate}
		\item (Centerness) Given a constant net $\langle x\rangle_d\in\textsc{Nets}([0,1])$. If $x\neq \frac{1}{2}$ clearly converges usually on the real line. Now, if $x=\frac{1}{2}$ we have $\langle x\rangle_d^{\uparrow}=\mathfrak{u}_{\frac{1}{2}}$. This means that $\langle x\rangle_d\to_L x$.
		
		\item (Isotonicity) If $\varphi\to_{L} x$ and $\psi$ is subnet of $\varphi$, in case that $x\neq \frac{1}{2}$ and $\varphi\to_{\mathbb{R}} x$, it happens that $\psi\to_{\mathbb{R}}x$. On the other hand, if $x=\frac{1}{2}$ and there is a net $\sigma\in\textsc{Nets}([0,\frac{1}{2}])\cup\textsc{Nets}([\frac{1}{2}])$ such that $\sigma\to_{\mathbb{R}}\frac{1}{2}$ and $\sigma^{\uparrow}\subseteq \varphi^{\uparrow}$ or $\varphi^{\uparrow}=\mathfrak{u}_{\frac{1}{2}}$, then $\sigma^{\uparrow}\subseteq \psi^{\uparrow}$ or $\psi^{\uparrow}=\mathfrak{u}_{\frac{1}{2}}$ and $\psi\to_{\mathbb{R}}x$. It follows that $\psi\to_{L} x$.
		
		\item Let $\langle s_n\rangle$ be a sequence in $[0,\frac{1}{2})$ and $\langle t_n\rangle$ be a sequence in $(\frac{1}{2},1]$ both converging to $\frac{1}{2}$. The sequence $\langle z_n\rangle$ such that $z_{2n}=s_n$ and $z_{2n+1}=t_n$ is a mixing of $s$ and $t$ and there is no net in $\textsc{Nets}([0,\frac{1}{2}])$ or $\textsc{Nets}([\frac{1}{2},1])$ having $\langle z_n\rangle_n$ as subnet because his tails have points in $[0,\frac{1}{2})$ and $(\frac{1}{2},1]$. This show that this preconvergence is not stable.
	\end{enumerate}
	 In particular, this preconvergence is not topological, in the sense of being induced by a topology. This is not just an example of a convergence lacking stability; in Section \ref{sec3.1}, we will use this example to examine  the hypotheses necessary for the existence of a Pasting Lemma in preconvergence spaces.
\end{example}
\begin{example}
	\label{lollipop}
		Let $X\subseteq\mathbb{R}^2$ be a lollipop, that is, a union of a line $L$ and a circle $C$ in the plane such that $L\cap C=\{p\}$.
	
	\begin{figure}[H]
		\centering
		\includegraphics[width=6cm]{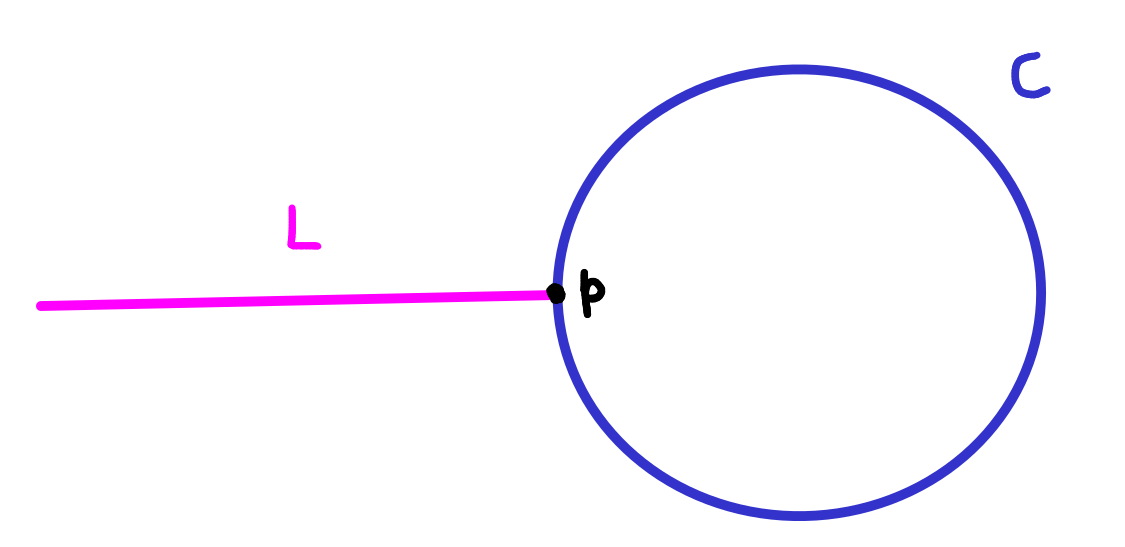}
		\caption{The lollipop $X$}
	\end{figure}

	Let $S=L\setminus\{p\}$ and $D\subseteq S$ be a dense countable set with respect to the usual topology of $\mathbb{R}^2$. For a net $\varphi\in\textsc{Nets}(X)$ and a point $x\in X$, we define a preconvergence $\lambda$ such as:
	\begin{enumerate}
		\item If $x\neq p$, then $\varphi\to_\lambda x$ if and only if $\varphi\to_{\mathbb{R}^2} x$,
		\item If $x=p$, then $\varphi\to_\lambda x$ if and only if
		\begin{enumerate}[i)]
			\item $C\in\varphi^\uparrow$ and $\varphi\to_{\mathbb{R}^2} p$ or,
			\item $\varphi^\uparrow\# D$, in the sense of the definition \ref{defmesh}, and $\varphi\to_{\mathbb{R}^2} p$ or,
			\item $\varphi$ is a subnet of a mixing of finitely many nets satisfying conditions (i) or (ii).
			
		\end{enumerate}
		
	\end{enumerate}
	Notice that $\langle X,\lambda \rangle $ is a limit space. Indeed,

	\begin{enumerate}
		\item (Centerness) 
		Let $\langle x\rangle_d$ be a constant net in $X$. If $x\neq p$, clearly $\langle x\rangle_d\to_{\mathbb{R}^2} x$. That is, $\langle x\rangle_d\to_\lambda x$. On the other hand, if $x=p$, notice that $C\in\langle x\rangle_d^{\uparrow}=\mathfrak{u}_x$ and $\langle x\rangle_d\to_{\mathbb{R}^2}$. This means that $\langle x\rangle_d\to_\lambda p$.
		\item (Isotonicity) If $\varphi\to_\lambda x$ and $\psi$ is subnet of $\varphi$. In case that $x\neq p$ it is clear that $\psi\to_\lambda x$. On the other hand, if $x=p$ we would have $C\in\varphi^{\uparrow}\subseteq\psi^{\uparrow}$ and $\psi\to_{\mathbb{R}^2} p$, or $D\#\varphi^{\uparrow}$ and $\varphi\to_{\mathbb{R}^2} x$, then $\psi$ is subnet of a mixing of finitely many nets satisfying condition (ii), or $\psi$ is subnet of a mixing of finitely many nets satifsfayng the  conditions (i) and (ii). It follows that $\psi\to_{\lambda} x$.
		\item (Stability) Let $\varphi,\psi\in\textsc{Nets}(X)$ be nets such that $\varphi,\psi\to_{\lambda} x$ and $\rho$ be  a mixing of $\varphi$ and $\psi$. If $x\neq p$, clearly $\rho\to_{\lambda} x$. Otherwise, if $x=p$, in any case $\rho$ is subnet of a mixing of finitely many nets satisfying conditions (i) or (ii).
	\end{enumerate}
\end{example}

 Recall that Proposition \ref{continuity} states that continuity of a function preserves the convergence of nets. Therefore, the definition of continuity in preconvergence spaces is as follows.

\begin{definition}
	Let $\langle X,L\rangle $ and $\langle Y,L'\rangle$ be preconvergence spaces. A function $f:X\to Y$ is \textbf{continuous} if $f\circ \varphi \to_{L'} f(x)$ whenever $\varphi\in\textsc{Nets}(X)$ is a net such that $\varphi\to_L x$. We denote by $\mathcal{C}(X,Y)$ the set of all continuous from $X$ to $Y$. \label{symbol:continuousFunctions}
\end{definition}

\begin{remark}
	In filter language, like in Proposition \ref{filter}, a function $f:\langle X,L\rangle\to\langle Y,L'\rangle$ is continuous if $f(\mathcal{F})\to_{L'} f(x)$ whenever $\mathcal{F}\in\hbox{Fil}^*(X)$ is a filter such that $\mathcal{F}\to_L x$.
\end{remark}

\begin{proposition}
	Let $\langle X,\tau\rangle$ and $\langle Y,\tau'\rangle$ be topological spaces. If $f:\langle X,\tau\rangle \to\langle Y,\tau'\rangle$ is continuous as a function between topological spaces, then $f:\langle X,\lim_\tau\rangle \to\langle Y,\lim_{\tau'}\rangle $ is continuous as function between preconvergences spaces.
\end{proposition}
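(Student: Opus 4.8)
The plan is to observe that this statement is essentially a translation exercise: the preconvergence $\lim_\tau$ is \emph{defined} to be the net convergence of the topological space $\langle X,\tau\rangle$, so the two notions of continuity in play are linked by Proposition \ref{continuity}. First I would unwind the definitions. By the definition of $\lim_\tau$, a net $\varphi\in\textsc{Nets}(X)$ satisfies $\varphi\to_{\lim_\tau} x$ if and only if $\varphi$ converges to $x$ in the topological sense (every neighborhood of $x$ contains a tail of $\varphi$). The analogous equivalence holds on the codomain for $\lim_{\tau'}$.

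Next I would carry out the single substantive step. Suppose $f:\langle X,\tau\rangle\to\langle Y,\tau'\rangle$ is continuous as a map of topological spaces. Take an arbitrary net $\varphi\in\textsc{Nets}(X)$ with $\varphi\to_{\lim_\tau} x$; by the previous paragraph this means $\varphi\to x$ in the topology $\tau$. Applying Proposition \ref{continuity} to the topologically continuous map $f$, we obtain $f\circ\varphi\to f(x)$ in the topology $\tau'$, which by the definition of $\lim_{\tau'}$ is exactly the statement $f\circ\varphi\to_{\lim_{\tau'}} f(x)$.

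Finally I would invoke the definition of continuity between preconvergence spaces (Definition \ref{symbol:continuousFunctions}): since $f\circ\varphi\to_{\lim_{\tau'}} f(x)$ holds for every net $\varphi$ with $\varphi\to_{\lim_\tau} x$, the map $f:\langle X,\lim_\tau\rangle\to\langle Y,\lim_{\tau'}\rangle$ is continuous as a function between preconvergence spaces, completing the argument.

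There is no real obstacle here; the only thing to be careful about is to cite Proposition \ref{continuity} in the correct direction (topological continuity $\Rightarrow$ preservation of net convergence), and to make explicit that $\to_{\lim_\tau}$ and topological net convergence are literally the same relation, so that no hidden hypothesis about $\tau$ or $\tau'$ is smuggled in. In fact the argument shows more than is stated: the converse also holds by the same chain of equivalences together with the ``conversely'' half of Proposition \ref{continuity}, so topological continuity and $\langle\lim_\tau,\lim_{\tau'}\rangle$-continuity coincide, though only the stated implication is required.
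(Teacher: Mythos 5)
Your proposal is correct and follows exactly the paper's route: the paper's proof is the one-line ``Follows from Proposition \ref{continuity}'', and you have simply unwound that citation together with the definitions of $\lim_\tau$ and of continuity between preconvergence spaces. Nothing is missing and nothing differs in substance.
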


\begin{proof}
	Follows from Proposition \ref{continuity}.
\end{proof}

\begin{proposition}
	If $f: X\to Y$ and $g:Y\to Z$ are continuous, then $g\circ f:X\to Z$ is continuous. Furthermore, the identity function $1_X:X\to X$ is continuous. \label{symbol:identity}
\end{proposition}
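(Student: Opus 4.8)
The plan is to unwind the definition of continuity for preconvergence spaces directly, since this statement amounts to the assertion that preconvergence spaces together with continuous maps form a category with well-defined identities. Denote by $L$, $L'$, $L''$ the preconvergences on $X$, $Y$, $Z$ respectively. For the composite, I would begin with an arbitrary net $\varphi\in\textsc{Nets}(X)$ satisfying $\varphi\to_L x$. Applying the continuity hypothesis for $f$ gives $f\circ\varphi\to_{L'} f(x)$. The key observation is that $f\circ\varphi$ is itself a net in $Y$, so it is a legitimate input for the continuity hypothesis on $g$, which yields $g\circ(f\circ\varphi)\to_{L''} g(f(x))$.

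The remaining step is to recognize the associativity of composition together with the behaviour on the base point: as nets in $Z$ we have $g\circ(f\circ\varphi)=(g\circ f)\circ\varphi$, and on points $g(f(x))=(g\circ f)(x)$. Substituting these identities shows that $(g\circ f)\circ\varphi\to_{L''}(g\circ f)(x)$ for every net $\varphi$ with $\varphi\to_L x$, which is precisely the definition of continuity of $g\circ f$. For the identity map the argument is even shorter: if $\varphi\to_L x$ in $X$, then $1_X\circ\varphi=\varphi$ and $1_X(x)=x$, so $1_X\circ\varphi\to_L 1_X(x)$ holds trivially.

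I do not anticipate a genuine obstacle here, as the proof is purely formal manipulation of the definition of continuity; notably, none of the preconvergence axioms (centredness, isotonicity, stability) are required. The one point deserving an explicit sentence is the associativity identity $g\circ(f\circ\varphi)=(g\circ f)\circ\varphi$, which is what guarantees that chaining the two continuity hypotheses produces the composite map $g\circ f$ rather than some other net. Everything else is immediate from reading off the definition.
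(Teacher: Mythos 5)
Your proof is correct and follows exactly the same route as the paper's: chain the two continuity hypotheses on an arbitrary convergent net, invoke associativity of composition, and note that the identity case is trivial. Your remark that none of the preconvergence axioms are needed is accurate and a nice observation, though the paper does not make it explicit.
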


\begin{proof}
If $\varphi\in\textsc{Nets}(X)$ is a net such that $\varphi\to_L x$, since $f$ is continuous, it happens that $f\circ \varphi\to f(x)$. But $g$ is also continuous, so $g\circ(f\circ \varphi)=(g\circ f)\circ\varphi\to g(f(x))=g\circ f(x)$. This proves that $g\circ f$ is continuous. It is easy to check that the identity function is continuous.
\end{proof}

\label{symbol:PrConv}\label{symbol:Conv}\label{symbol:Lim}
\label{symbol:Top}
We denote the categories of preconvergence spaces, convergence spaces, and limit spaces by $\textsc{PrConv}$, $\textsc{Conv}$, and $\textsc{Lim}$, respectively, where the arrows are the continuous functions. The previous examples show that the following strict categorical inclusions holds: $\textsc{Lim}\subsetneq \textsc{Conv}\subsetneq\textsc{PrConv}$. In Example \ref{sphere}, we will also see that $\textsc{Top}\subsetneq \textsc{Lim}$. Some of these categories are preferable to others precisely due to the properties that preconvergence possesses. In Section \ref{modifiers}, we show how to make a topological or limit modification of a preconvergence space in a functorial way. But what do we gain by extending the category of topological spaces? The most significant advantage is the existence of exponential objects, as we will see in Section \ref{sec2.4}. There are additional categories of preconvergences spaces, such as pretopological spaces, pseudotopological spaces and Kent spaces. A convergence space $\langle X,L\rangle$ is:

\begin{itemize} 
	
	\item pseudotopological, if $L(\mathcal{F})=\bigcap_{\mathfrak{u}\in\beta(\mathcal{F})}L(\mathfrak{u}) $, where $\beta(\mathcal{F})$ is the set of ultrafilters on $X$ containing $\mathcal{F}$. Ultrafilters can be easily replaced by ultranets, which are precisely those nets
	whose induced filters are ultrafilters. 
	
	\item a Kent space, if for every $x\in X$
	and every net $\varphi\in\textsc{Nets}(X)$, we have $\rho\to_L x$ whenever $\rho$ is a
	mixing of $\varphi$ and the constant net $\langle x\rangle_d $ and $\varphi\to_L x$.
	
	\item pretopological, if for each $x\in X$ there is a filter $\mathcal{V}_x$ converging to $x$ such that $\mathcal{V}_x\subseteq \mathcal{F}$ whenever $\mathcal{F}\in\textsc{Fil}^*(X)$ is a filter such that $\mathcal{F}\to_ Lx$. For more details see~\cite{schechter1996handbook} and~\cite{dossena}.
	
\end{itemize}

Recall that we can compare two topologies on a set by inclusion. We say that a topology $\tau'$ on a set $X$ is finer than another topology $\tau$ on the same set if $\tau \subseteq \tau'$, that is, every open set in $\tau$ is also open in $\tau'$. Similarly, we can also compare preconvergences on a set. The idea now is that the convergence of a net in one preconvergence implies the convergence of the same net in another preconvergence. This is interesting because it will allow us to discuss final and initial structures, where we see that these categories have good properties, such as the existence of products, coproducts, quotients, and subspaces.

\begin{definition}
Let $L$ and $L'$ be preconvergences on $X$. We say that $L'$ is \textbf{finer} or \textbf{stronger} than $L$, or that $L$ is \textbf{coarser} or \textbf{weaker} than $L'$, if $\varphi\to_L x$ whenever $\varphi\to_{L'} x$, that is, $L'(\varphi)\subseteq L(\varphi)$ for every net $\varphi\in\textsc{Nets}(X)$. In this case we write $L\leq L'$. \label{symbol:preconvergenceOrder}
\end{definition}

\begin{remark}
This order on preconvergences reflects the order of topologies. If $\tau$ and $\tau'$ are topologies on $X$ such that $\tau \subseteq \tau'$ and $\varphi \in \textsc{Nets}(X)$ is a net such that $\varphi \to_{\tau'} x$, then $\mathcal{N}_{x,\tau'} \subseteq \varphi^{\uparrow}$. Since $\mathcal{N}_{x,\tau} \subseteq \mathcal{N}_{x,\tau'}$, it follows that $\varphi \to_{\tau} x$.  This means that $\lim_{\tau'}$ is finer than $\lim_{\tau}$. Conversely, if $\lim_{\tau'}\geq \lim_{\tau}$ and $U\subseteq X$ is an $\tau$-open set, for a net $\varphi\in\textsc{Nets}(X)$ such that $\varphi\to_{\tau'}x\in U$ we have $\varphi\to_{\tau} x$. Since $U$ is $\tau$-open, its follows that $U\in\varphi^{\uparrow}$. By Proposition \ref{open}, $U$ is an $\tau'$-open set. This proves that $\tau\subseteq \tau'$. 
\end{remark}

\begin{example}[Adapted from~\cite{ref3}]
	\label{seq}
	For a net $\varphi\in\textsc{Nets}(\mathbb{R})$ we declare $\varphi\to_{\hbox{Seq}} x$ if there is sequence $s:\mathbb{N}\to\mathbb{R}$ such that $\varphi$ is subnet of $s$ and $s\to_{\mathbb{R}}x$. In this case, we say that $\varphi$ converges sequentially to $x$. Notice that this preconvergence is stronger than the usual convergence on the real line
	 We show that $\langle \mathbb{R},\hbox{Seq} \rangle$ is a limit space.
	\begin{enumerate}

	\item (Centerness) A constant net and a constant sequence induces the same principal ultrafilter. Then constant nets converge sequentially.
	
	\item (Isotonicity) If $\varphi\in\textsc{Nets}(\mathbb{R})$ is a net such that $\varphi\to_{\hbox{Seq}} x$ and $\psi$ is subnet of $\varphi$, there is a sequence $s:\mathbb{N}\to\mathbb{R}$ such that $s\to_{\mathbb{R}} x$ and $s^{\uparrow}\subseteq \varphi^{\uparrow}$. It follows that $s^{\uparrow}\subseteq \psi^{\uparrow}$, then $\psi\to_{\hbox{Seq}} x$.
	
	\item (Stability)  Let $\rho\in\textsc{Nets}(\mathbb{R})$ be a mixing of two nets $\varphi$ and $\psi$ such that $\varphi,\psi\to_{\hbox{Seq}}x$. There are sequences $p,q:\mathbb{N}\to\mathbb{R}$ such that $s_1,s_2\to x$ and $p^{\uparrow}\subseteq \varphi^{\uparrow}$ and $q^{\uparrow}\subseteq \psi^{\uparrow}$. So, $p^{\uparrow}\cap q^{\uparrow}\subseteq \varphi^{\uparrow}\cap\psi^{\uparrow}\subseteq \rho^{\uparrow}$. The sequence $s:\mathbb{N}\to \mathbb{R}$ such that $s_{2n}=p_n$ and $s_{2n+1}=q_n$ converges to $x$ and $s^{\uparrow}=p^{\uparrow}\cap q^{\uparrow}$. It follows that $\rho\to_{\hbox{Seq}} x$.
\end{enumerate}
	 The difference between this convergence and the usual one is that we now have many more filters to witness convergence. In the usual case, we only have the neighborhood filter, whereas in this context, the filters of convergent sequences can be used. We will back to this example in Section \ref{sec3.3}.
\end{example}

The goal now is to study the structure of preconvergences with the order defined earlier. Recall that a complete lattice is a partially ordered set in which every subset has both a supremum  and an infimum. For a family $\mathcal{S}$ of preconvergences on a set $X$, we define the preconvergences $\bigvee \mathcal{S}$ and $\bigwedge \mathcal{S}$ in the following way

$$\begin{array}{rcl}
	\bigvee\mathcal{S} : \textsc{NETS}(X)& \to     & \mathcal{P}(X) \\ 
	\varphi & \mapsto & \begin{cases}
		\bigcap _{L\in\mathcal{S}}L(\varphi) \ \hbox{  if } \mathcal{S}\neq \emptyset \\
		X \ \hbox{  otherwise }
	\end{cases}\\
\end{array} \ \ \hbox{ and } \ \ \begin{array}{rcl}
	\bigwedge\mathcal{S} : \textsc{Nets}(X)& \to     & \mathcal{P}(X) \\ 
	\varphi & \mapsto & 	\bigcup_{L\in\mathcal{S}} L(\varphi)\\
\end{array}$$

 Notice that $\bigvee\mathcal{S}=\hbox{sup}(\mathcal{S})$. Indeed, clearly $\bigvee\mathcal{S}\geq L$ for all $L\in\mathcal{S}$ and if $\lambda$ is a preconvergence on $X$ such that $\lambda\geq L$ for all $L\in\mathcal{S}$, it is imediate that $\varphi\to_{\bigvee\mathcal{S}} x$ whenever $\varphi\to_{\lambda} x$. This establishes that $\lambda\geq \bigvee\mathcal{S}$. Then we conclude that $ \bigvee\mathcal{S}=\hbox{sup}(\mathcal{S})$. Similarly, we can prove that $\bigwedge\mathcal{S}=\hbox{inf}(\mathcal{S})$.  In  other words, the set of all preconvergences on a set $X$ is a complete lattice.

\label{sec211}

\begin{definition}
	Let $X$  be a set, $\langle Y_i,L_i\rangle$ be a preconvergence space and $f_i:X\to Y_i$ be a function for each $i\in\mathcal{I}$. The \textbf{initial preconvergence }on $X$ is the coarser preconvergence such that each  $f_i$ is continuous. We denote this preconvergence by $\bigvee_{i\in\mathcal{I}}f_i^{-1}L$. If $|\mathcal{I}|=1$ we denote this preconvergence by $f^{-1}L$. \label{symbol:initialPreconv}
\end{definition}

\begin{proposition}
Let $\langle Y,L\rangle$ be a preconvergence space and $f:X\to Y$ be a function, Then $\varphi \in\textsc{Nets}(X)$ is a net such that $\varphi\to_{f^{-1}L} x$ if and only $f\circ\varphi\to f(x)$.
\end{proposition}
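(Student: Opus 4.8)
The plan is to exhibit the explicit preconvergence determined by the stated condition and to identify it with $f^{-1}L$. Define a rule $M$ on $X$ by declaring $\varphi\to_M x$ exactly when $f\circ\varphi\to_L f(x)$; the proposition then amounts to the equality $f^{-1}L=M$. One direction is free: since $f^{-1}L$ is by definition a preconvergence making $f$ continuous, $\varphi\to_{f^{-1}L}x$ forces $f\circ\varphi\to_L f(x)$, i.e.\ $\varphi\to_M x$. Everything else is devoted to the reverse implication, which I would obtain from the minimality built into the definition of the initial preconvergence.

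First I would check that $M$ really is a preconvergence in the net sense, i.e.\ that $M(\varphi)=M(\psi)$ whenever $\varphi^{\uparrow}=\psi^{\uparrow}$. The key observation is that the induced filter of $f\circ\varphi$ depends only on the induced filter of $\varphi$: the tails of $f\circ\varphi$ are precisely the images $f[\{\varphi_d:d\geq d'\}]$ of the tails of $\varphi$, so $(f\circ\varphi)^{\uparrow}=f(\varphi^{\uparrow})$, the image filter. Hence $\varphi^{\uparrow}=\psi^{\uparrow}$ gives $(f\circ\varphi)^{\uparrow}=(f\circ\psi)^{\uparrow}$, and since $L$ is itself a preconvergence, and so depends only on induced filters, we get $L(f\circ\varphi)=L(f\circ\psi)$, that is $M(\varphi)=M(\psi)$. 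I expect this well-definedness step to be the main obstacle, in the sense that it is the only place where one must argue rather than merely unwind definitions.

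With $M$ a bona fide preconvergence, the two remaining points are short. By construction $f$ is continuous for $M$, since $\varphi\to_M x$ says exactly $f\circ\varphi\to_L f(x)$. And $M$ is the coarsest such: if $P$ is any preconvergence on $X$ making $f$ continuous and $\varphi\to_P x$, then continuity gives $f\circ\varphi\to_L f(x)$, i.e.\ $\varphi\to_M x$; thus $P(\varphi)\subseteq M(\varphi)$ for every net $\varphi$, which by the definition of the order is precisely $M\leq P$. Therefore $M$ lies below every preconvergence rendering $f$ continuous while itself doing so, so $M$ is the coarsest preconvergence making $f$ continuous, namely $f^{-1}L$.

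Combining these observations yields the claim directly: $\varphi\to_{f^{-1}L}x$ iff $\varphi\to_M x$ iff $f\circ\varphi\to_L f(x)$. I would close by remarking that the forward implication recorded at the outset is in fact subsumed by the equality $f^{-1}L=M$, so no separate verification of it is needed in the final write-up.
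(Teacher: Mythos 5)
Your proposal is correct and follows essentially the same route as the paper: define the candidate preconvergence by $\varphi\to x$ iff $f\circ\varphi\to_L f(x)$, observe it makes $f$ continuous, and show it is coarser than any preconvergence doing so. The only addition is your explicit check that the candidate depends only on induced filters (via $(f\circ\varphi)^{\uparrow}=f(\varphi^{\uparrow})$), a well-definedness point the paper leaves implicit.
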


\begin{proof}
	
	Consider the preconvergence $\lambda$ in $X$ such that a net $\varphi\in\textsc{Nets}(X)$ $\lambda$-converges to $x$ if and only if $f\circ \varphi\to_{L} f(x)$. Clearly $\lambda$ makes $f$ continuous. Let $\lambda'$  be a preconvergence in $X$ such that $f:\langle X,\lambda\rangle\to\langle Y,L\rangle $ is continuous, given a net $\varphi\in\textsc{Nets}(X)$ such that $\varphi\to_{\lambda'} x$, since $f$ is continuous, we have $f\circ\varphi \to_L f(x)$. It follows that $\varphi\to_{\lambda} x$. This proves that $f^{-1}L=\lambda$.
\end{proof}

\begin{proposition}
	Let $\langle Y_i, L_i\rangle$ be a preconvergence space and $f_i:X\to Y_i$ be a function for each $i\in\mathcal{I}$. A net $\varphi\in\textsc{Nets}(X)$ is such that $\varphi\to _{\bigvee_{i\in\mathcal{I}}f^{-1}L} x$ if and only if $f_i\circ \varphi\to_{L_i} f(x)$ for all $i\in\mathcal{I}$.
\end{proposition}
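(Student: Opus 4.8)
The plan is to reduce the statement to the already–proved single–map case together with the defining property of $\bigvee_{i\in\mathcal{I}}f_i^{-1}L_i$ as the \emph{coarsest} preconvergence making every $f_i$ continuous. First I would introduce the preconvergence $\mu$ on $X$ declared by $\varphi\to_\mu x$ if and only if $f_i\circ\varphi\to_{L_i}f_i(x)$ for all $i\in\mathcal{I}$, and the whole task is to identify $\mu$ with $\bigvee_{i\in\mathcal{I}}f_i^{-1}L_i$. A preliminary point is that $\mu$ is a legitimate preconvergence, i.e.\ that $\mu(\varphi)=\mu(\psi)$ whenever $\varphi^{\uparrow}=\psi^{\uparrow}$: the tails of $f_i\circ\varphi$ are the $f_i$-images of the tails of $\varphi$, so $(f_i\circ\varphi)^{\uparrow}=f_i(\varphi^{\uparrow})=f_i(\psi^{\uparrow})=(f_i\circ\psi)^{\uparrow}$, and since each $L_i$ depends only on the induced filter, the defining condition for $\mu$ is unchanged. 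Equivalently, by the single–map proposition $\varphi\to_\mu x$ is the same as ``$\varphi\to_{f_i^{-1}L_i}x$ for all $i$'', and each $f_i^{-1}L_i$ is itself a preconvergence.

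Next I would establish the two comparisons. By construction $\mu$ renders every $f_j$ continuous, since $\varphi\to_\mu x$ gives in particular $f_j\circ\varphi\to_{L_j}f_j(x)$; because $\bigvee_{i\in\mathcal{I}}f_i^{-1}L_i$ is the coarsest preconvergence making all the $f_i$ continuous, this forces $\bigvee_{i\in\mathcal{I}}f_i^{-1}L_i\leq\mu$, i.e.\ $\varphi\to_\mu x$ implies $\varphi\to_{\bigvee_i f_i^{-1}L_i}x$. Conversely, $\bigvee_{i\in\mathcal{I}}f_i^{-1}L_i$ itself makes each $f_i$ continuous, so $\varphi\to_{\bigvee_i f_i^{-1}L_i}x$ yields $f_i\circ\varphi\to_{L_i}f_i(x)$ for every $i$, that is $\varphi\to_\mu x$. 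The two implications together give $\mu=\bigvee_{i\in\mathcal{I}}f_i^{-1}L_i$, which is precisely the asserted characterization.

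A cleaner repackaging, which I would present as the main line, is to note that $\bigvee_{i\in\mathcal{I}}f_i^{-1}L_i$ is the supremum of the family $\{f_i^{-1}L_i\}_{i\in\mathcal{I}}$ in the complete lattice of preconvergences, so that $\left(\bigvee_{i}f_i^{-1}L_i\right)(\varphi)=\bigcap_{i}(f_i^{-1}L_i)(\varphi)$; then $x\in\bigcap_i(f_i^{-1}L_i)(\varphi)$ means $\varphi\to_{f_i^{-1}L_i}x$ for all $i$, which by the single–map proposition is exactly $f_i\circ\varphi\to_{L_i}f_i(x)$ for all $i$. The only genuinely substantive step — and the point I expect to be the main obstacle — is the identification of the initial preconvergence of the definition with this lattice supremum: one must check that a preconvergence $\lambda$ makes every $f_i$ continuous if and only if $\lambda\geq f_i^{-1}L_i$ for all $i$, equivalently $\lambda\geq\bigvee_i f_i^{-1}L_i$, so that the coarsest such $\lambda$ is indeed $\sup_i f_i^{-1}L_i$. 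Once that order-theoretic equivalence is in hand, the rest is routine bookkeeping with the order on preconvergences.
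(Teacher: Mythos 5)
Your proposal is correct and, in its ``cleaner repackaging'', coincides with the paper's own proof, which is exactly the chain of equivalences $\varphi\to_{\bigvee_{i}f_i^{-1}L_i}x\iff\varphi\to_{f_i^{-1}L_i}x$ for all $i\iff f_i\circ\varphi\to_{L_i}f_i(x)$ for all $i$ --- the first step being the lattice-supremum identity and the second the single-map proposition. The additional care you take in identifying the initial preconvergence (the coarsest one making every $f_i$ continuous) with $\sup_{i}f_i^{-1}L_i$ is left implicit in the paper, but it is precisely the right justification for that first equivalence.
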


\begin{proof}
	
	Notice that $$\varphi\to _{\bigvee_{i\in\mathcal{I}}f^{-1}L} x\Longleftrightarrow \varphi\to_{f_{i}^{-1}L} x \hbox{ for all }i\in\mathcal{I}\Longleftrightarrow f_i\circ\varphi \to_{L_i} f_i(x) \hbox{ for all } i\in\mathcal{I}$$

\end{proof}

\begin{example}
	Let $S\subseteq X$ be a subset of a preconvergence space $\langle X,L\rangle $. The subpreconvergence is the initial preconvergence such that the inclusion $i:S\to X$ is continuous, we denote this preconvergence by $L|_S$. Note that a net $\varphi\in\textsc{Nets}(S)$ $L|_S$-converges to $ x\in S$ if and only if $i\circ\varphi\to_L x$, i.e, $\varphi\to_L x$. This tells us that in subspace preconvergence we are not changing the net, whereas in the context of filters it would be necessary to use a different filter.
\end{example}

\begin{example}
	For each $i\in\mathcal{I}$ consider a preconvergence space $\langle X_i,L_i\rangle$. The product preconvergence on $\prod_{i\in\mathcal{I}} X_i$ is the initial preconvergence such that each projection $$\pi_j:\prod_{i\in\mathcal{I}} X_i\to X_j$$ is continuous. Explicitly, a net $\varphi\in\textsc{Nets}\big(\prod_{i\in\mathcal{I}}X_i\big)$ converges to a $\mathcal{I}$-tuple  $x=\langle x_i \rangle_{i\in\mathcal{I}}$ if and only if $\pi_j\circ\varphi\to_{L_j}x_j$ for all $j\in \mathcal{I}$. In summary, a net converges in the cartesian product if it converges at each coordinate. In the case of two preconvergence spaces $X$ and $Y$, a net $\langle x_a,y_a\rangle_{a\in\mathbb{A}} $ in $X\times Y$ converges to $\langle x,y\rangle $ if and only if $x_a\to x$ in $X$ and $y_a\to y$ in $Y$. \label{symbol:product}
\end{example}
\begin{proposition}
	Let $\langle Y_i,L_i\rangle $ be a preconvergence space and $f_i:X\to Y_i$ be a function for each $i\in\mathcal{I}$. A function $g:\langle Z,L'\rangle \to \langle X,\bigvee f_i^{-1}L_i\rangle $ is continuous if and only if $f_i\circ g$ is continuous for all $i\in\mathcal{I}$.
\end{proposition}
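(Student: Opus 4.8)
The plan is to reduce both implications to the net-level characterization of the initial preconvergence established just above, namely that $\varphi\to_{\bigvee_{i\in\mathcal{I}}f_i^{-1}L_i} x$ holds if and only if $f_i\circ\varphi\to_{L_i}f_i(x)$ for every $i\in\mathcal{I}$. Together with the associativity of composition of functions and nets, this turns the universal property into an essentially formal manipulation, so I would not expect to need anything beyond the propositions already in hand.

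First I would handle the forward direction. By the very definition of the initial preconvergence, each $f_i:\langle X,\bigvee_{i\in\mathcal{I}}f_i^{-1}L_i\rangle\to\langle Y_i,L_i\rangle$ is continuous. Hence, if $g:\langle Z,L'\rangle\to\langle X,\bigvee_{i\in\mathcal{I}}f_i^{-1}L_i\rangle$ is continuous, then each composite $f_i\circ g$ is continuous, since the composition of continuous functions is continuous.

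For the converse, I would assume that $f_i\circ g$ is continuous for every $i\in\mathcal{I}$ and take an arbitrary net $\varphi\in\textsc{Nets}(Z)$ with $\varphi\to_{L'} z$. The goal is to show $g\circ\varphi\to_{\bigvee_{i\in\mathcal{I}}f_i^{-1}L_i} g(z)$. Applying the characterization proposition to the net $g\circ\varphi$, it suffices to check that $f_i\circ(g\circ\varphi)\to_{L_i} f_i(g(z))$ for each $i$. But $f_i\circ(g\circ\varphi)=(f_i\circ g)\circ\varphi$, and since $f_i\circ g$ is continuous and $\varphi\to_{L'} z$, this last net converges to $(f_i\circ g)(z)=f_i(g(z))$, as required.

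There is no serious obstacle here: the argument is a direct transcription of the universal property into the language of convergent nets. The only point demanding care is the bookkeeping of composites, ensuring that $f_i\circ(g\circ\varphi)$ and $(f_i\circ g)\circ\varphi$ are recognized as the same net, so that the continuity hypothesis on $f_i\circ g$ can be invoked cleanly. The entire proof leans on the net characterization of $\bigvee_{i\in\mathcal{I}}f_i^{-1}L_i$ rather than on any property specific to filters, which is precisely the kind of simplification the net-based approach is meant to provide.
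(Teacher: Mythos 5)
Your proof is correct and follows essentially the same route as the paper's: the forward direction via closure of continuity under composition, and the converse by checking the net characterization of $\bigvee_{i\in\mathcal{I}}f_i^{-1}L_i$ coordinatewise using the continuity of each $f_i\circ g$. No discrepancies to note.
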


\begin{proof}
	Since the composition of continuous function is continuous, the first implication holds. For the converse, given a net $\varphi\in\textsc{Nets}(Z)$ such that $\varphi\to_{L'} x$. Since $f_i\circ g$ is continuous, we have $f_i\circ(g\circ\varphi)\to_{L_i} f_i(g(x))$ for all $i\in\mathcal{I}$. It follows that $g\circ\varphi\to_{\bigvee f_i^{-1}L_i} g(x)$. This means $g$ is continuous. 
\end{proof}

\begin{definition}
	Let  $Y$ be a set, $\langle X,L_i\rangle$ be a preconvergence and $f_i:X_i\to Y$ be a function for each $i\in\mathcal{I}$. The  \textbf{final preconvergence} on $Y$ is the finest preconvergence such that each $f_i$ is continuous. We denote this preconvergence by $\bigwedge_{i\in\mathcal{I}}f_iL_i$. In case that $|\mathcal{I}|=1$, we denote this preconvergence by $fL$. \label{symbol:finalPreconv}
\end{definition}

\begin{proposition}
	Let $\langle X,L\rangle$ be a preconvergence space and $f:X\to Y$ be a function. A net $\varphi\in\textsc{Nets}(Y)$ is such that $\varphi\to_{fL} y$ if and only if there is $x\in X$ such that $f(x)=y$ and a net $\psi\in\textsc{Nets}(X)$ such that $\psi \to_L x$ and $f(\psi^{\uparrow})=\varphi^{\uparrow}$.
\end{proposition}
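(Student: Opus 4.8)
The statement characterizes convergence in the final preconvergence $fL$ as the finest preconvergence making $f$ continuous. The plan is to define a candidate preconvergence $\lambda$ on $Y$ by declaring that $\varphi \to_\lambda y$ precisely when the stated condition holds—namely, there exist $x \in X$ with $f(x) = y$ and a net $\psi \in \textsc{Nets}(X)$ with $\psi \to_L x$ and $f(\psi^{\uparrow}) = \varphi^{\uparrow}$—and then to verify two things: that $\lambda$ makes $f$ continuous, and that $\lambda$ is the finest such preconvergence. These two facts together identify $\lambda$ with $fL$, which is exactly the content of the proposition.

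First I would show that $f \colon \langle X, L\rangle \to \langle Y, \lambda\rangle$ is continuous. Given a net $\psi \in \textsc{Nets}(X)$ with $\psi \to_L x$, I must check that $f \circ \psi \to_\lambda f(x)$. Taking the witness point to be $x$ itself and the witness net to be $\psi$, the required conditions $\psi \to_L x$ and $f(\psi^{\uparrow}) = (f\circ\psi)^{\uparrow}$ hold; here I would use that $f(\psi^{\uparrow}) = (f \circ \psi)^{\uparrow}$, which follows from the fact that the image filter of the tail filter of $\psi$ coincides with the tail filter of $f \circ \psi$ (images of tails are tails of the composite). So $f \circ \psi \to_\lambda f(x)$, establishing continuity.

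Next I would show $\lambda$ is the finest preconvergence making $f$ continuous, i.e.\ $\lambda \geq \mu$ for any preconvergence $\mu$ on $Y$ such that $f \colon \langle X, L\rangle \to \langle Y, \mu\rangle$ is continuous. Suppose $\varphi \to_\lambda y$; then there are $x$ with $f(x)=y$ and $\psi \to_L x$ with $f(\psi^{\uparrow}) = \varphi^{\uparrow}$. By continuity of $f$ into $\mu$, we get $f \circ \psi \to_\mu f(x) = y$. Since $f(\psi^{\uparrow}) = (f\circ\psi)^{\uparrow} = \varphi^{\uparrow}$, the nets $f \circ \psi$ and $\varphi$ induce the same filter, so $\varphi \to_\mu y$ as well, because preconvergences are defined on nets only through their induced filters. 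Hence $\lambda(\varphi) \subseteq \mu(\varphi)$ for every $\varphi$, giving $\lambda \geq \mu$. Combining the two steps, $\lambda$ is the finest preconvergence making $f$ continuous, so $\lambda = fL$ by definition, which is the desired characterization.

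The main obstacle I anticipate is bookkeeping around the induced-filter condition $f(\psi^{\uparrow}) = \varphi^{\uparrow}$ rather than any deep difficulty. The key identity $f(\psi^{\uparrow}) = (f\circ\psi)^{\uparrow}$ must be handled carefully: the image filter $f(\psi^{\uparrow})$ is generated by sets $f[A]$ for $A \in \psi^{\uparrow}$, and I would verify that these have the same tail generators as $f\circ\psi$, so that passing between the net $f\circ\psi$ and an arbitrary net $\varphi$ inducing the same filter is legitimate. This well-definedness—that $\lambda$ respects the requirement $L(\varphi)=L(\psi)$ whenever $\varphi^{\uparrow}=\psi^{\uparrow}$—is precisely what makes the final step go through, and it is where I would be most careful to confirm the condition depends only on $\varphi^{\uparrow}$ and not on the particular net $\varphi$.
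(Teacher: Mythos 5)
Your proposal is correct and follows essentially the same route as the paper: define the candidate preconvergence $\lambda$ by the stated condition, observe it makes $f$ continuous, and show any other preconvergence making $f$ continuous is coarser, using the identity $f(\psi^{\uparrow})=(f\circ\psi)^{\uparrow}$ to transfer convergence from $f\circ\psi$ to $\varphi$. The only difference is that you spell out the continuity step and the well-definedness of $\lambda$ on induced filters, which the paper dismisses as clear.
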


\begin{proof}
Consider the preconvergence $\lambda$ in $Y$ such that a net $\varphi\in\textsc{Nets}(Y)$ $\lambda$-converges to $y\in Y$ if and only if there is a net $\psi\in\textsc{Nets}(X)$ such that $\psi \to_L x$, $f(x)=y$ and $f(\psi^{\uparrow})=\varphi^{\uparrow}$. Clearly $\lambda$ makes $f$ continuous. If $\lambda'$ is a preconvergence in $Y$ wich makes $f$ continuous and $\varphi\in\textsc{Nets}(Y)$ is a net such that $\varphi\to_{\lambda} y$. There is a net $\psi\in\textsc{Nets}(X)$ such that $\psi\to_L x$, $f(x)=y$ and $f(\psi^{\uparrow})=\varphi^{\uparrow}$. Since $f:\langle X,L\rangle\to\langle Y,L'\rangle$ is continuous, we have $f\circ\psi\to_{\lambda'}f(x)$. It follows that $\varphi\to_{\lambda'}$. Then $\lambda\geq \lambda'$. This proves that $fL=\lambda$.
\end{proof}

\begin{proposition}
	Let be $\langle X_i, L_i\rangle$ a preconvergence space and $f_i:X_i\to Y$ be a function for each $i\in\mathcal{I}$. A net $\varphi\in\textsc{Nets}(Y)$ is such that $\varphi \to_{\bigwedge_{i\in\mathcal{I}}f_iL_i} y$ if and only if there is a net $\psi\in\textsc{Nets}(X_j)$ such that $\psi \to_{L_j} x$, $f(\varphi^{\uparrow})= \psi^{\uparrow}$ and $f_j(x)=y$ for some $j\in\mathcal{I}$.
\end{proposition}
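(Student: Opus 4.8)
The plan is to reduce the statement to two facts already available: the lattice formula for the infimum of a family of preconvergences established in the discussion preceding the definition of initial preconvergence, and the single-function characterization of a final preconvergence proved in the immediately preceding proposition. First I would argue that the final preconvergence of the family coincides with the infimum of the individual final preconvergences, that is, $\bigwedge_{i\in\mathcal{I}} f_i L_i = \hbox{inf}_{i\in\mathcal{I}}(f_i L_i)$ in the complete lattice of preconvergences on $Y$.

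To establish this identity, I would observe that a preconvergence $M$ on $Y$ renders every $f_i$ continuous if and only if $M \leq f_i L_i$ for each $i\in\mathcal{I}$. The forward direction is precisely the defining maximality of $f_i L_i$ as the finest preconvergence making $f_i$ continuous, while the converse holds because continuity of $f_i$ into a fixed codomain preconvergence is preserved when that codomain is replaced by a coarser one (coarsening only adds convergent nets, so the image net still converges). Hence the preconvergences making all the $f_i$ continuous are exactly the lower bounds of $\{f_i L_i : i\in\mathcal{I}\}$, and their finest member is the infimum. Invoking the union description of infima, namely $(\hbox{inf}_{i} f_i L_i)(\varphi) = \bigcup_{i\in\mathcal{I}} (f_i L_i)(\varphi)$, I obtain that $\varphi \to_{\bigwedge_i f_i L_i} y$ holds if and only if $\varphi \to_{f_j L_j} y$ for some $j\in\mathcal{I}$.

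Finally I would apply the single-function proposition to the index $j$: $\varphi \to_{f_j L_j} y$ holds exactly when there exist $x\in X_j$ with $f_j(x)=y$ and a net $\psi\in\textsc{Nets}(X_j)$ with $\psi \to_{L_j} x$ and $f_j(\psi^{\uparrow}) = \varphi^{\uparrow}$. Chaining the two equivalences yields the claimed characterization. I expect the only genuine obstacle to be the identification in the second paragraph, specifically the verification that the infimum really does make each $f_i$ continuous, which rests on the monotonicity of continuity under coarsening the target; everything else is bookkeeping. As an alternative that sidesteps the lattice formula and instead mirrors the proof of the single-function case verbatim, I could define $\lambda$ on $Y$ directly by the right-hand condition, check that $\lambda$ makes each $f_i$ continuous by taking $j=i$ and $\psi=\sigma$ for a given $\sigma\to_{L_i}x$ (using $f_i(\sigma^{\uparrow}) = (f_i\circ\sigma)^{\uparrow}$), and then show that any $M$ making all $f_i$ continuous satisfies $M\leq\lambda$, since $(f_j\circ\psi)^{\uparrow} = \varphi^{\uparrow}$ forces $\varphi\to_M y$.
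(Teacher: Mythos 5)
Your argument is correct and follows the same route as the paper, whose entire proof is the one-line remark that the claim ``follows from the definition of infimum of preconvergences'': you identify $\bigwedge_{i\in\mathcal{I}}f_iL_i$ with $\inf_{i\in\mathcal{I}}(f_iL_i)$ via the lower-bound characterization, apply the union formula for infima, and then invoke the single-function proposition for each index $j$. You have simply made explicit the details the paper leaves implicit (including the correct orientation $f_j(\psi^{\uparrow})=\varphi^{\uparrow}$, which the statement itself garbles), so no further comparison is needed.
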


\begin{proof}
	Follows from definition of infimum of preconvergences.
\end{proof}

\begin{example}
	Given an equivalence relation $\sim$ on a preconvergence space $X$. The quotient convergence on $X/\sim$ is the final preconvergence such that the canonical projection $$\begin{array}{rcl}
		\pi: X & \to     & X/\sim \\ 
		x & \mapsto & 	[x] \\
	\end{array}$$  is continuous. 
\end{example}

\begin{example}
	Let  $X_i$ be a preconvergence space for each $i\in\mathcal{I}$. Recall that the coproduct, or disjoint union, of this family is the set $\coprod_{i\in\mathcal{I}}X_i=\{\langle x,i\rangle :x\in X_i\}$. The coproduct preconvergence on $\coprod_{i\in\mathcal{I}}X_i$ is the final preconvergence such that each inclusion $$\begin{array}{rcl}
		i_j: X_j & \to     & \coprod_{i\in\mathcal{I}}X_i \\ 
		x & \mapsto & 	\langle x,j\rangle \\
	\end{array}$$ is continuous. \label{symbol:coproduct}
\end{example}

\begin{proposition}
	Let $\langle X_i,L_i\rangle $ be a preconvergence space and $f_i:X_i\to Y$ be a function for each $i\in\mathcal{I}$. A function $g: \langle Y,\bigwedge f_iL_i\rangle \to \langle Z,L'\rangle $ is continuous if and only if $g\circ f_i$ is continuous for all $i\in\mathcal{I}$.
\end{proposition}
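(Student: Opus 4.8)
The plan is to mirror the argument given for the initial preconvergence, exploiting the universal characterization of $\bigwedge_{i\in\mathcal{I}} f_i L_i$ as the finest preconvergence making every $f_i$ continuous, together with the net-lifting description of final convergence established in the preceding proposition. The forward implication is immediate: by definition of the final preconvergence each $f_i : \langle X_i, L_i\rangle \to \langle Y, \bigwedge f_i L_i\rangle$ is continuous, so if $g$ is continuous then $g \circ f_i$ is continuous for every $i$ as a composite of continuous functions.

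The substantive direction is the converse. Assume $g \circ f_i$ is continuous for all $i \in \mathcal{I}$, and take any net $\varphi \in \textsc{Nets}(Y)$ with $\varphi \to_{\bigwedge f_i L_i} y$. By the preceding proposition there exist an index $j \in \mathcal{I}$, a point $x \in X_j$, and a net $\psi \in \textsc{Nets}(X_j)$ such that $\psi \to_{L_j} x$, $f_j(x) = y$, and $f_j(\psi^{\uparrow}) = \varphi^{\uparrow}$. Since $g \circ f_j$ is continuous and $\psi \to_{L_j} x$, we obtain $(g \circ f_j) \circ \psi \to_{L'} (g\circ f_j)(x) = g(y)$.

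The final step, and the only point requiring care, is to transport this convergence back to $g \circ \varphi$. The key observation is that the induced filter of a composite net is the image filter of the induced filter: for any net $\eta$ and function $h$ one has $(h \circ \eta)^{\uparrow} = h(\eta^{\uparrow})$, because the tails of $h \circ \eta$ are exactly the images under $h$ of the tails of $\eta$. Applying this with $h = f_j$ and $\eta = \psi$ gives $(f_j \circ \psi)^{\uparrow} = f_j(\psi^{\uparrow}) = \varphi^{\uparrow}$; applying it again with $h = g$ yields
$$\big(g \circ (f_j \circ \psi)\big)^{\uparrow} = g\big((f_j\circ\psi)^{\uparrow}\big) = g(\varphi^{\uparrow}) = (g\circ\varphi)^{\uparrow}.$$
Since $L'$ is a preconvergence, $L'(\eta)$ depends only on $\eta^{\uparrow}$, so the convergence $g \circ (f_j \circ \psi) \to_{L'} g(y)$ forces $g \circ \varphi \to_{L'} g(y)$. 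As $\varphi$ and $y$ were arbitrary, $g$ is continuous. I expect the main obstacle to be precisely this filter bookkeeping: one must keep straight that final convergence lifts a net only up to its induced filter, and then lean on the defining property of a preconvergence — invariance under $(\bullet)^{\uparrow}$ — to replace the lifted composite $g\circ(f_j\circ\psi)$ by the net $g\circ\varphi$ that actually appears in the definition of continuity of $g$.
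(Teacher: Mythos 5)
Your proof is correct and follows essentially the same route as the paper: forward direction by composition of continuous maps, converse by lifting the net $\varphi$ through some $f_j$ via the net characterization of the final preconvergence, pushing forward with $g\circ f_j$, and using $g\bigl((f_j\circ\psi)^{\uparrow}\bigr)=(g\circ\varphi)^{\uparrow}$ together with the filter-invariance of preconvergences to conclude. Your explicit justification of the identity $(h\circ\eta)^{\uparrow}=h(\eta^{\uparrow})$ is a welcome clarification of a step the paper states without comment.
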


\begin{proof}
	Since composition of continuous function is continuous, the first implication holds. For the converse, if $\varphi\in\textsc{Nets}(Y)$ is a net such that $\varphi\to_{\bigwedge f_iL_i} y$, there is a net $\psi\in\textsc{Nets}(X_j)$ such that $\psi \to_{L_j} x$, $f_j(\varphi^{\uparrow})= \psi^{\uparrow}$ and $f_j(x)=y$ for some $j\in\mathcal{I}$. Since $g\circ f_j$ is continuous, we have $(g\circ f_j)\circ \psi\to_{L'} g(y)$. Note that $g((f_j\circ\psi)^{\uparrow})=(g\circ\varphi)^{\uparrow} $. It follows that $g\circ \varphi\to_{L'} g(y)$. This proves that $g$ is continuous.
\end{proof}

\begin{lemma}
	Let $\mathcal{S}$ be a nonempty family of preconvergences on a set $X$. If $\apair{X,L}$ is a limit space for every $L\in\mathcal{S}$, then $\apair{X,\bigvee\mathcal{S}}$ is also a limit space.
\end{lemma}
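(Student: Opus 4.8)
The plan is to verify directly that $\bigvee\mathcal{S}$ satisfies the three defining properties of a limit convergence — centeredness, isotonicity, and stability — by exploiting the pointwise description of the supremum. The crucial observation, which I would record first, is that since $\mathcal{S}\neq\emptyset$ we have $\bigvee\mathcal{S}(\varphi)=\bigcap_{L\in\mathcal{S}}L(\varphi)$, so that $\varphi\to_{\bigvee\mathcal{S}}x$ holds if and only if $\varphi\to_L x$ for every $L\in\mathcal{S}$. Each of the three properties is a universally quantified implication of the form ``if the stated convergence hypotheses hold, then some derived net converges to $x$,'' and such implications pass through an intersection over $\mathcal{S}$ without difficulty.

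Concretely, I would proceed in three short steps, in each case reducing to the corresponding property of the individual $L$. For centeredness, take a constant net $\langle x\rangle_d$; since every $L\in\mathcal{S}$ is centered we have $\langle x\rangle_d\to_L x$ for all $L$, hence $\langle x\rangle_d\to_{\bigvee\mathcal{S}}x$. For isotonicity, suppose $\varphi\to_{\bigvee\mathcal{S}}x$ and let $\psi$ be a subnet of $\varphi$; then $\varphi\to_L x$ for every $L$, and isotonicity of each $L$ gives $\psi\to_L x$ for every $L$, so $\psi\to_{\bigvee\mathcal{S}}x$. For stability, suppose $\varphi,\psi\to_{\bigvee\mathcal{S}}x$ and let $\rho$ be a mixing of $\varphi$ and $\psi$; then $\varphi,\psi\to_L x$ for every $L$, and stability of each $L$ yields $\rho\to_L x$ for every $L$, whence $\rho\to_{\bigvee\mathcal{S}}x$. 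Since $L$ being a limit convergence means exactly that it is centered, isotone, and stable, combining the three steps shows that $\bigvee\mathcal{S}$ is a limit convergence, so $\apair{X,\bigvee\mathcal{S}}$ is a limit space.

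I do not expect any serious obstacle: the argument is entirely formal once the intersection description of the supremum is established. The only point deserving a little care is the role of the hypothesis $\mathcal{S}\neq\emptyset$, which ensures that $\bigvee\mathcal{S}(\varphi)$ is a genuine intersection rather than all of $X$ (the convention adopted for the empty family); the nonemptiness assumption aligns the statement with the description of $\sup$ in the complete lattice of preconvergences discussed just before, and makes each of the three reductions above meaningful.
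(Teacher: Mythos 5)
Your proposal is correct and follows essentially the same route as the paper: it verifies centeredness, isotonicity, and stability for $\bigvee\mathcal{S}$ by reducing each property to the corresponding property of every $L\in\mathcal{S}$ via the identity $\bigvee\mathcal{S}(\varphi)=\bigcap_{L\in\mathcal{S}}L(\varphi)$. Your explicit remark on the role of $\mathcal{S}\neq\emptyset$ is a small clarifying addition the paper leaves implicit.
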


\begin{proof}
	\ 
	\begin{enumerate}

		\item (Centerness) If $\varphi=\apair{x}_d$ is a constant net, it happens that $\varphi\to_L x$ for every $L\in\mathcal{S}$. This means that $\varphi\to_{\bigvee\mathcal{S}} x$.
		
		\item (Isotonicity) If  $\psi$ is a subnet of $\varphi$ such that $\varphi\to_{\bigvee\mathcal{S}} x$, for every $L\in\mathcal{S}$ we have $\varphi\to_{L}x$. Then $\psi\to_L x$ for every $L\in\mathcal{S}$. It follows that $\psi\to_{\bigvee \mathcal{S}} x$. 
		
		\item (Stability) If $\rho$ is a mixing of $\varphi$ and $\psi$ such that $\psi,\varphi\to_{\bigvee\mathcal{S}}x$. For every $L\in\mathcal{S}$ we have $\psi,\varphi\to_L x$. It follows that $\rho\to_L x$ for every $L\in\mathcal{S}$. This means that $\rho\to_{\bigvee\mathcal{S}}x$.
		
	\end{enumerate}
\end{proof}
\begin{remark}
	The same can be showed for topological spaces and others, see~\cite{Preusspaper}.
\end{remark}

\begin{remark}

Final structures do not preserve the type of preconvergence space. For example, the quotient of a limit space may not be a limit space. Given $x,y\in[0,1] $ we declare that $x\sim y$ if $x,y\in (0,1)$ and $x=y$ or $x=0$ and $y=1$ or $x=1$ and $y=0$. Notice that $\sim$ is an equivalence relation on $[0,1]$ and $[0,1]/\sim=(0,1)\cup \{\bullet\}$, where $\bullet$ is the equivalence class of $0$ and $1$. Consider the function

$$\begin{array}{rcl}
	f:[0,1] & \to     & (0,1) \cup\{\bullet\}\\ 
	x & \mapsto & 	\begin{cases} 
		x & \text{if } x \neq 0, 1 \\
		\bullet  & \text{otherwise}
	\end{cases} \\
\end{array}$$ 
We consider the final preconvergence induced by $f$ in the quotient. Notice that this preconvergence is not stable. Indeed, let \( \langle x_n\rangle _n \) and \( \langle y_n\rangle _n \) be sequences in \( (0, 1) \) such that \( x_n \to 0 \) and \( y_n \to 1 \). Since $f$ is continuous, it follows that \( f(x_n) \to f(0) = \bullet  \) and \( f(y_n) \to f(1) = \bullet \). The mixing \( \langle z_n\rangle _n \) of $\langle x_n\rangle_n$ and $\langle y_n\rangle_n$ defined by

\[
z_n =
\begin{cases} 
	f(x_n)=x_n  & \text{if } n \text{ is even} \\
	f(y_n)=y_n & \text{if } n \text{ is odd}
\end{cases}
\]
does not converges to \( \bullet \) in $(0,1)\cup\{\bullet\}$. If \( \varphi \in \textsc{Nets}([0, 1]) \) is a net converging to \( 0 \), there is \( a' \in \text{dom}(\varphi) \) such that \( \varphi_a < \frac{1}{2} \) whenever \( a \geq a' \). Notice that \( f \circ \varphi[a'^{ \uparrow} ]\) contains no tail set of \( \langle z_n\rangle _n \), since every tail of $\langle z_n \rangle_n$ contains points greater than $\frac{1}{2}$. This means that \( f(\varphi^\uparrow) \not\subseteq \langle z_n\rangle_n^\uparrow \). Then $\langle z_n\rangle_n\not\to \bullet$. A similar argument holds in the case that $\varphi$ converges to $1$.
\end{remark}

\section{Topological and limit modification}

\label{modifiers}

Notice that Proposition \ref{open} enables us to characterize the conditions under which a point belongs to the topological interior of a set. In the context of preconvergence spaces, we also introduce the concept of inherence.

\begin{definition}[Adapted from~\cite{schechter1996handbook}]
		 Let $\langle X,L\rangle$ be a preconvergence space. The \textbf{$L$-inherence }of a subset $S\subseteq X$ is the set $$\hbox{inh}_L(S)=\{x\in X:\forall \varphi\in\textsc{Nets}(X)(\varphi\to_L x\implies S\in\varphi^{\uparrow})\}$$
		 We say that $S$ is \textbf{$L$-open} if
		$S\subseteq\hbox{inh}_L(S)$. We denote denote by $\mathcal{O}(L)$ the family of $L$-opens of $X$.
\end{definition}

\begin{remark}
Inherence generalizes the concept of interior. Indeed, given a topological space $\langle X,\tau\rangle$  and a subset $S\subseteq X$. Considering $L=\lim_\tau$, by Proposition \ref{open} we have $\hbox{inh}_L(S)=\hbox{int}_\tau(S)$, where $\hbox{int}_\tau(S)$ denotes the topological interior of $S$.
\end{remark}

\begin{remark}
	In topological spaces, it is true that the interior of a subset is contained in the subset. This does not generally hold for preconvergence spaces. But it does hold if $\langle X, L\rangle $ is a preconvergence space where $L$ is centered. Indeed, given $x\in\hbox{inh}_L(S)$, since $\mathfrak{u}_x\to_L x$, we have $S\in\mathfrak{u}_x$. It follows that $x\in S$. This means that $\hbox{inh}_L(S)\subseteq S$.
	
\end{remark}

But what is the interest in defining inherence? So far, we have generalized the concept of a topological space, where we associate its corresponding limit space with each topological space. However, it is often good to work with a topology, so the question arises: Is there a good way to transition from a preconvergence space to a topological space? The answer is yes, and the term $L$-open is suggestive.
\label{symbol:inherence}

\begin{proposition}
	
	The family $\mathcal{O}(L)$ is a topology on $X$, called the topology induced by $L$. We refer as $\langle X,\mathcal{O}(L)\rangle$ to topological modification of $\langle X,L\rangle$. \label{symbol:inducedTopology}
	
\end{proposition}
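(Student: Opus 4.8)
The plan is to verify the three axioms of a topology for $\mathcal{O}(L)$ directly from the definition of $L$-openness, leaning entirely on the fact that for every net $\varphi$ the induced family $\varphi^{\uparrow}$ is a \emph{proper filter}: it is upward-closed, closed under finite intersections, and contains $X$ but not $\emptyset$. The one structural fact about inherence I would isolate first is its monotonicity: if $A\subseteq B$, then $\hbox{inh}_L(A)\subseteq\hbox{inh}_L(B)$. This holds because, for any net $\varphi\to_L x$ with $A\in\varphi^{\uparrow}$, the inclusion $A\subseteq B$ forces $B\in\varphi^{\uparrow}$ by upward closure; so a witness $x$ for $A$ is automatically a witness for $B$. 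This monotonicity is exactly what will drive the union axiom.

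Next I would dispatch the two trivial members. The containment $\emptyset\subseteq\hbox{inh}_L(\emptyset)$ is vacuously true, so $\emptyset\in\mathcal{O}(L)$. For $X$, note that every tail set of a net is a subset of $X$, hence $X\in\varphi^{\uparrow}$ for every $\varphi$; thus $\hbox{inh}_L(X)=X$ and $X\in\mathcal{O}(L)$. For finite intersections, take $L$-open sets $S_1,S_2$ and a point $x\in S_1\cap S_2$. By $L$-openness, $x\in\hbox{inh}_L(S_1)$ and $x\in\hbox{inh}_L(S_2)$, so any net $\varphi\to_L x$ satisfies $S_1,S_2\in\varphi^{\uparrow}$; closure of the filter under finite intersection gives $S_1\cap S_2\in\varphi^{\uparrow}$, whence $x\in\hbox{inh}_L(S_1\cap S_2)$, i.e. $S_1\cap S_2$ is $L$-open. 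For an arbitrary union $\bigcup_{i\in\mathcal{I}}S_i$ of $L$-open sets, pick $x$ in the union, choose $j$ with $x\in S_j$, and use $x\in\hbox{inh}_L(S_j)\subseteq\hbox{inh}_L\big(\bigcup_{i\in\mathcal{I}}S_i\big)$ by monotonicity; this shows the union is $L$-open.

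I do not expect a genuine obstacle here, since the filter axioms satisfied by $\varphi^{\uparrow}$ do all the work. The main point to get right is conceptual rather than technical: one should notice that \emph{no} property of $L$ (centeredness, isotonicity, or stability) is invoked anywhere, so the result holds for an arbitrary preconvergence. I would also flag the mild care needed with the definition of $L$-open as the containment $S\subseteq\hbox{inh}_L(S)$ rather than an equality; it is precisely this one-sided formulation, combined with monotonicity of $\hbox{inh}_L$, that lets the union argument close without any further hypotheses.
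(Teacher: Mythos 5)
Your proof is correct and follows essentially the same route as the paper: both verify the three topology axioms directly, using that $\varphi^{\uparrow}$ is a proper filter (upward closure for unions, closure under finite intersections for intersections). Your explicit isolation of monotonicity of $\hbox{inh}_L$ and the observation that no property of $L$ is needed are nice touches, but the substance is identical.
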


\begin{proof}
	\ 
	\begin{enumerate}
		\item It is easy to check that $X$ and $\emptyset$ are $L$-open.
		\item Let $U,V\in\mathcal{O}(L)$ be $L$-open sets, Given a net $\varphi\in\textsc{Nets}(X)$ such that $\varphi\to_L x\in U\cap V$, since $U$ and $V$ are $L$-open, it happens that $U,V\in\varphi^{\uparrow}$. Then $U\cap V\in \varphi^{\uparrow}$. This means that $U\cap V\in\mathcal{O}(L)$.
		\item Let $\mathcal{U}\subseteq\mathcal{O}(L)$ be a family of $L$-open sets. Given a net $\varphi\in\textsc{Nets}(X)$ such that $\varphi\to_L x\in\bigcup\mathcal{U}$, there is $U\in\mathcal{U}$ such that $x\in U$. Since $U$ is $L$-open, it happens that $U\in\varphi^{\uparrow}$. Notice that $\bigcup\mathcal{U}\in\varphi^{\uparrow}$ because $U\subseteq\bigcup \mathcal{U}$. This proves that $\bigcup\mathcal{U}\in\mathcal{O}(L)$.
	\end{enumerate}
\end{proof}

\begin{example}
	\label{sphere}
	Recall that in Example \ref{circle} we define a preconvergence $L$ in the circle $\mathbb{S}^1$. Now, we show that the topological modification $\mathcal{O}(L)$ is the chaotic topology. Let $S\subseteq \mathbb{S}^1$ be a nonempty $L$-open set, suppose that there is $y\in\mathbb{S}^1\setminus S$ such that $||x-y||\leq r$ for some $x\in S$. In this case $\mathfrak{u}_y\to_L x$, but $S\notin \mathfrak{u}_y$. Then for all $y\in \mathbb{S}^1$ such that $||x-y||\leq r$ for some $x\in S$ it happens that $y\in S$. Moreover, notice that for all $y\in \mathbb{S}^1$ there is a sequence $\langle x_n\rangle_n$ of points in $S$ such that $||x_m-y||\leq r$ for some $m\in\mathbb{N}$. Indeed, denoting by $B(x,\frac{r}{2})$ the open ball with center $x$ and radius $\frac{r}{2}$, we see that there is $x_0\in B(x,\frac{r}{2})\cap \mathbb{S}^1$, as well as there is $x_1\in B(x_0,\frac{r}{2})$. This argument allows us to create a sequence $\langle x_n\rangle_n$ such that $x_n\in S$ for all $n\in\mathbb{N}$, $x_n\in B(x_{n-1},\frac{r}{2})\cap \mathbb{S}^1$ and  $||x_m-y||\leq r$ for some $m\in\mathbb{N}$, see Figure \ref{arg}. It follows that $y\in S$. This proves that $\mathbb{S}^1=S$.
	
	\begin{figure}[H]
		\centering
		\includegraphics[width=5cm]{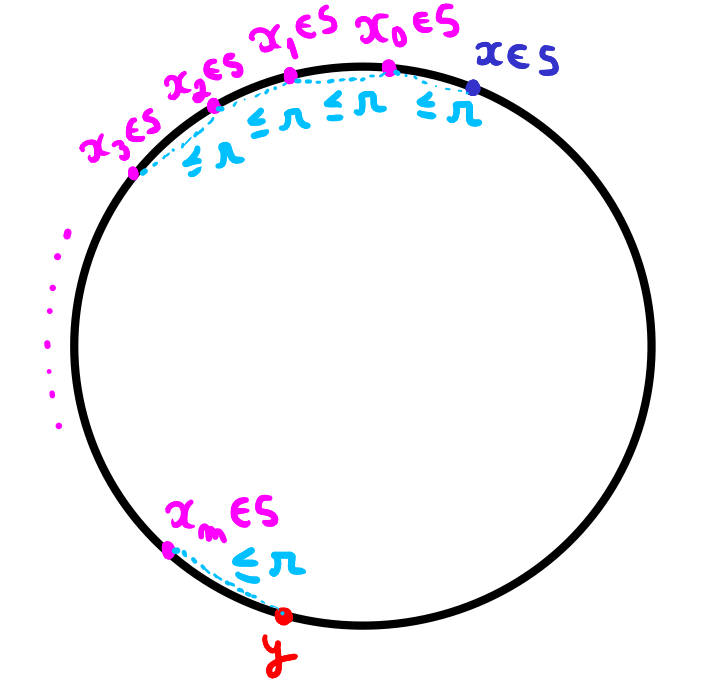}
		\caption{A pictorial description of this argument}
		\label{arg}
	\end{figure}
	
	Since the chaotic topology induces the chaotic preconvergence, in particular, we see that the convergence induced by $\mathcal{O}(L)$ is not $L$ in general. Indeed, this preconvergence is not topological by Proposition \ref{topological}
\end{example}

\begin{example}
	The topological modification of the sequential convergence of Example \ref{215} is the standard topology of the real line. Indeed, let $U\subseteq \mathbb{R}$ be an usual open set and $\varphi\in\textsc{Nets}(\mathbb{R})$ be a net such that $\varphi\to_{\hbox{Seq}} x\in U$. It happens that $\varphi\to_{\mathbb{R}} x$. Since $U$ is open, it follows that $U\in\varphi^{\uparrow}$. This proves that $U$ is $\hbox{Seq}$-open. Conversely, if $U\subseteq\mathbb{R}$ is not open there is a net $\varphi\in\textsc{Nets}(\mathbb{R})$ such that $\varphi\to_{\mathbb{R}}x$ but $U\notin\varphi^{\uparrow}$. Notice that $U\notin\mathcal{N}_x$, then there is a sequence $\langle x_n\rangle_n$ in $\mathbb{R}\setminus U$ such that $x_n\to_{\mathbb{R}}x$ but $U\notin \langle x_n\rangle_n^{\uparrow}$. This means that $U$ is not $\hbox{Seq}$-open.
\end{example}


\begin{remark}
Note that if a function between preconvergence spaces is continuous, then it is continuous as a function between their induced topological spaces. Indeed, let \( f: \langle X, L \rangle \to \langle Y, L' \rangle \) be a continuous function and \( V \subseteq Y \) be an \( L' \)-open set. Given a net \( \varphi \in \textsc{Nets}(X) \) such that \( \varphi \to_{L} x \in f^{-1}[V] \), by continuity of \( f \), it happens that \( f \circ \varphi \to_{L'} f(x) \in V \). Since \( V \) is \( L' \)-open, it follows that \( V \in (f \circ \varphi)^{\uparrow} \), which implies that \( f^{-1}[V] \in \varphi^{\uparrow} \). This proves that \( f^{-1}[V] \) is \( L \)-open. Then \( f: \langle X, \mathcal{O}(L) \rangle \to \langle Y, \mathcal{O}(L') \rangle \) is continuous.
 So, this way of topologizing a preconvergence space is so good that it is functorial.  And it is worth noting that in this process we are not losing any continuous functions we had before. So far, we have two functors.

\[
\begin{array}{cc}
	\begin{tikzcd}[ampersand replacement=\&, cramped]
		{\textsc{PrConv}} \& {\textsc{Top}} \\
		{\langle X,L\rangle} \& {\langle X,\mathcal{O}(L)\rangle} \\
		{\langle Y,L'\rangle} \& {\langle Y,\mathcal{O}(L')\rangle}
		\arrow["{\mathcal{O}(\bullet)}", from=1-1, to=1-2]
		\arrow[from=2-1, to=2-2]
		\arrow["f", from=2-1, to=3-1]
		\arrow["f", from=2-2, to=3-2]
		\arrow[from=3-1, to=3-2]
	\end{tikzcd}   & 
	\begin{tikzcd}[ampersand replacement=\&, cramped]
		{\textsc{Top}} \& {\textsc{PrConv}} \\
		{\langle X,\tau\rangle} \& {\langle X,\lim_\tau\rangle} \\
		{\langle Y,\tau'\rangle} \& {\langle Y,\lim_{\tau'}\rangle}
		\arrow["{\lim(\bullet)}", from=1-1, to=1-2]
		\arrow[from=2-1, to=2-2]
		\arrow["f"', from=2-1, to=3-1]
		\arrow["f"', from=2-2, to=3-2]
		\arrow[from=3-1, to=3-2]
	\end{tikzcd}
\end{array}
\]

\end{remark}

Recall that in Example \ref{215} we see that not every preconvergence is induced by a topology. But what is the condition for this to happen? The topology induced by a preconvergence gives us an answer to this question.

\begin{proposition}
	\label{topological}
	A preconvergence $L$ is topological if and only if $L=\lim_{\mathcal{O}(L)}$.
\end{proposition}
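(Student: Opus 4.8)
The plan is to exploit the two functors $\mathcal{O}(\bullet)$ and $\lim(\bullet)$ and reduce the whole statement to the single identity $\mathcal{O}(\lim_\tau)=\tau$ valid for an arbitrary topology $\tau$. Recall that, by the terminology fixed earlier, a preconvergence is called \emph{topological} precisely when it equals $\lim_\tau$ for some topology $\tau$.

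The backward implication is immediate. If $L=\lim_{\mathcal{O}(L)}$, then $L$ is the limit preconvergence of the topology $\mathcal{O}(L)$ (which is a genuine topology by Proposition \ref{topological}'s companion, the proposition establishing that $\mathcal{O}(L)$ is a topology), and hence $L$ is topological by definition.

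For the forward implication, suppose $L$ is topological, so $L=\lim_\tau$ for some topology $\tau$ on $X$. The key step is to verify that $\mathcal{O}(\lim_\tau)=\tau$. By the remark following the definition of inherence, which is exactly the content of Proposition \ref{open}, we have $\hbox{inh}_{\lim_\tau}(S)=\hbox{int}_\tau(S)$ for every $S\subseteq X$. Consequently $S$ is $\lim_\tau$-open, i.e.\ $S\subseteq\hbox{inh}_{\lim_\tau}(S)$, if and only if $S\subseteq\hbox{int}_\tau(S)$, that is, if and only if $S$ is $\tau$-open. This gives $\mathcal{O}(L)=\mathcal{O}(\lim_\tau)=\tau$, whence $\lim_{\mathcal{O}(L)}=\lim_\tau=L$, as required.

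The only genuine obstacle is the identity $\mathcal{O}(\lim_\tau)=\tau$; once Proposition \ref{open} is invoked to identify the $L$-inherence with the topological interior, the remainder is purely formal. The one point to check carefully is that the definition of an $\mathcal{O}(L)$-open set, namely $S\subseteq\hbox{inh}_L(S)$, unwinds to the condition ``$x\in\hbox{inh}_{\lim_\tau}(S)$ for every $x\in S$'', so that the equivalence with $\tau$-openness is applied pointwise and no subtlety about which nets witness convergence is lost.
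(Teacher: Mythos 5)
Your proof is correct and rests on the same key ingredient as the paper's: Proposition \ref{open}, which identifies $L$-inherence with the topological interior when $L=\lim_\tau$. The only difference is presentational — the paper runs the forward direction as two inequalities between $\lim_{\mathcal{O}(L)}$ and $\lim_\tau$ by comparing neighborhood filters, whereas you isolate the cleaner identity $\mathcal{O}(\lim_\tau)=\tau$ and deduce the claim formally; the mathematical content is the same.
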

\begin{proof}
		Suppose that $L$ is topological. There is a topology $\tau$ such that $\lim_\tau=L$. Since $\lim_{\mathcal{O}(L)}\geq L$, we have $\lim_{\mathcal{O}(L)}\geq \lim_\tau$. Note that for each $x\in X$ the inclusion $\mathcal{N}_{x,\mathcal{O}(L)}\subseteq \mathcal{N}_{x,\tau}$ holds. This proves that $\lim_\tau\geq\lim_{\mathcal{O}(L)}$. The converse is imediate.
\end{proof}

Now we present how to transition between the categories of preconvergence spaces in a functorial way. The idea is to add to the preconvergence what is missing to have a convergence or limit space. For example, a convergence space fails to be a limit space if its preconvergence is not stable, so we add to the preconvergence the convergence of mixings.

\begin{definition}
	Let $\langle X,L\rangle$ be a convergence space. The \textbf{limit modification }of $L$ is the preconvergence $\sqcup(L)$ such that $\varphi\to_{\sqcup(L)} x$ if and only if there are finitely many nets $\varphi_0,\varphi_1,\cdots,\varphi_n\in\textsc{Nets}(X)$ such that $\varphi_i\to_L x$ for all $0\leq i\leq n$ and $\bigcap_{0\leq i\leq n} \varphi_i^{\uparrow}\subseteq\varphi^{\uparrow}$. \label{symbol:limitMod}
\end{definition}

The same idea can be applied to obtain a convergence space from a preconvergence space, see~\cite{schechter1996handbook}. We present only the limit modification since limit spaces are the focus of this work.
This is the best way to create a limit space from a convergence space, the reader can check that $\sqcup(L)$ is the infimum of all limit convergences which are coarser than $L$ . Moreover, the following correspondence defines a functor.

\[\begin{tikzcd}[ampersand replacement=\&,cramped]
	{\textsc{Conv}} \&\& {\textsc{Lim}} \\
	{\langle X,L\rangle} \&\& {\langle X,\sqcup(L)\rangle} \\
	{\langle Y,L'\rangle} \&\& {\langle Y,\sqcup(L')\rangle}
	\arrow["{\sqcup(\bullet)}", from=1-1, to=1-3]
	\arrow[maps to, from=2-1, to=2-3]
	\arrow["f"', from=2-1, to=3-1]
	\arrow["f"', from=2-3, to=3-3]
	\arrow[maps to, from=3-1, to=3-3]
\end{tikzcd}\]
Let us show that indeed $\sqcup(\bullet)$ is a functor.

\begin{enumerate}[i)]
\item $\langle X,\sqcup(L)\rangle$ is a limit space: Centerness and isotonicity follow easily. For stability, let  $\varphi,\psi\in\textsc{Nets}(X)$ be nets  such that $\varphi,\psi\to_{L}x$ and $\rho\in\textsc{Nets}(X)$ be a mixing of $\varphi$ and $\psi$. There are nets $\varphi_0,\varphi_1,\cdots,\varphi_n,\psi_0,\psi_1,\cdots,\psi_m\in\textsc{Nets}(X)$ such that $\bigcap_{0\leq i\leq m} \varphi_i^{\uparrow}\subseteq\psi^{\uparrow}$, $\bigcap_{0\leq i\leq n} \psi_i^{\uparrow}\subseteq\psi^{\uparrow}$ and $\varphi_i,\psi_j\to_L x$ for $0\leq i\leq n$ and $0\leq j\leq m$. Notice that $$\bigcap_{0\leq i\leq n} \varphi_i^{\uparrow}\cap \bigcap_{0\leq j\leq m} \psi_j^{\uparrow}\subseteq \varphi^{\uparrow}\cap\psi^{\uparrow}\subseteq\rho^{\uparrow}$$ This means that $\rho\to_{\sqcup(L)} x$.

\item If $f:\langle X,L\rangle\to\langle Y,L'\rangle$ is continuous and $\varphi\in\textsc{Nets}(X)$ is a net such that $\varphi\to_{\sqcup(L)} x$. There are nets finitely many nets $\varphi_0,\varphi_1,\cdots,\varphi_n\in\textsc{Nets}(X)$ such that $\varphi\to_{L}x$ and $\bigcap_{0\leq i\leq n} \varphi_i^{\uparrow}\subseteq \varphi^{\uparrow}$. Since $f$ is continuous, it happens that $f\circ \varphi_i\to_{L'}f(x)$ for $0\leq i\leq n$. Moreover, $$\bigcap_{0\leq i\leq n}(f\circ \varphi_i)^{\uparrow}\subseteq (f\circ \varphi)^{\uparrow}$$ It follows that $f\circ\varphi\to_{\sqcup(L')}f(x)$. This proves that $f:\langle X,\sqcup(L)\rangle \to\langle Y,\sqcup(L')\rangle$ is continuous. 

\end{enumerate}

\section{The continuous convergence}

\label{sec2.4}
\begin{definition}
	Let \( \mathcal{C} \) be a category. For two objects \( A \) and \( B \) in \( \mathcal{C} \), an \textbf{exponential object} \( B^A \) satisfies the following universal property:
	For any object \( C \) and any morphism \( f: C \times A \to B \), there is a unique morphism \( \tilde{f}: C \to B^A\) such that the following diagram commutes:
	
	\	
	\[\begin{tikzcd}[ampersand replacement=\&,cramped]
		{B^A} \& {B^A\times A} \& B \\
		C \& {C\times A}
		\arrow["\hbox{ev}", from=1-2, to=1-3]
		\arrow["{\tilde{f}}", dashed, from=2-1, to=1-1]
		\arrow["{\tilde{f}\times 1_A}", dashed, from=2-2, to=1-2]
		\arrow["f"', from=2-2, to=1-3]
	\end{tikzcd}\]
	where \( \hbox{ev}: B^A \times A \to B \) is the evaluation morphism.
	
\end{definition}
In this section we show that the category of preconvergence spaces has exponential objects, provided by the so-called continuous convergence, and consequently, it is suitable for homotopy theory. Exponential objects play a fundamental role in category theory. They are a generalization of familiar concepts of exponentiation, such as powers in algebra and function spaces in analysis, to the categorical context. Categories with exponential objects are known as cartesian closed categories. In the preface of~\cite{schechter1996handbook}, Dolecki compares convergence spaces to complex numbers in the sense that their category completes the exponential objects missing in the category of topological spaces, just as the complex numbers complete the missing roots of polynomials. Here, we don't go too deeply into the discussion of continuous convergence, presenting only the essential results. However, this convergence is of great theoretical interest, and readers seeking more details can check~\cite{ref4} and~\cite{binz}.
\begin{definition}

Let $X$ and $Y$ be preconvergence spaces. We say that a filter $\mathcal{F}$ on $\mathcal{C}(X,Y)$ \textbf{converges continuously}, or \textbf{$\mathcal{C}$-converges}, to a continuous function $f \in \mathcal{C}(X,Y)$ if for every $x \in X$ and $\mathcal{H} \in \textsc{Fil}^*(X)$ it happens that $\mathcal{F}(\mathcal{H}) \to f(x)$ whenever $\mathcal{H} \to x$, where $\mathcal{F}(\mathcal{H})$ denotes the filter generated by sets of the form $FH = \{ f(h) \in Y : f \in F, h \in H \}$ for $F \in \mathcal{F}$ and $H \in \mathcal{H}$.

\end{definition}

We initially defined continuous convergence using filters, but we will now translate this to nets. In some propositions, two proofs will be provided—one using filters and the other using nets. The goal is to illustrate how nets offer a more intuitive understanding of this convergence, while filters tend to complicate many arguments. The reader who prefers to look only at the proof with nets will not be missing anything.
\label{symbol:function}
\begin{proposition}
	A net $\langle f_a\rangle_{a\in\mathbb{A}}$ in $\mathcal{C}(X,Y)$ is such that $f_a\to_{\mathcal{C}} f$ if and only if  the net $\langle f_a(x_b)\rangle_{\langle a,b\rangle \in\mathbb{A}\times \mathbb{B}}$ converges to $f(x)$ whenever $\langle x_b\rangle _{b\in\mathbb{B}}$ is a net in $X$  such that $x_b\to x$.
\end{proposition}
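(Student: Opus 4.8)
The plan is to pass through the filter formulation of continuous convergence and exploit the identification of a net with its induced filter. Writing $\mathcal{F}=\langle f_a\rangle_a^{\uparrow}$ for the filter induced by the given net of functions, recall that by definition $f_a\to_{\mathcal{C}} f$ means precisely that $\mathcal{F}$ converges continuously to $f$. The decisive observation I would isolate first is the following identity: if $\langle x_b\rangle_{b\in\mathbb{B}}$ is any net in $X$ and $\mathcal{H}=\langle x_b\rangle_b^{\uparrow}$, then
\[
\mathcal{F}(\mathcal{H})=\langle f_a(x_b)\rangle_{\langle a,b\rangle\in\mathbb{A}\times\mathbb{B}}^{\uparrow},
\]
where the index set $\mathbb{A}\times\mathbb{B}$ carries the (directed) product order. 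Once this identity is in place both implications fall out at once, since convergence of a net depends only on its induced filter.

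To establish the identity I would compare filter bases. A basis of $\mathcal{F}$ is given by the tails $T_{a_0}=\{f_a:a\geq a_0\}$ and a basis of $\mathcal{H}$ by the tails $S_{b_0}=\{x_b:b\geq b_0\}$; the associated product set is $T_{a_0}S_{b_0}=\{f_a(x_b):a\geq a_0,\ b\geq b_0\}$, which is exactly the tail of the evaluation net at the index $\langle a_0,b_0\rangle$. Thus the tails of $\langle f_a(x_b)\rangle_{\langle a,b\rangle}$ are precisely the sets $T_{a_0}S_{b_0}$, and these generate the filter on the right-hand side. On the other hand, every generator $FH$ of $\mathcal{F}(\mathcal{H})$ contains some $T_{a_0}S_{b_0}$ (choose $a_0,b_0$ with $T_{a_0}\subseteq F$ and $S_{b_0}\subseteq H$), while conversely each $T_{a_0}S_{b_0}$ is itself a generator $FH$ with $F=T_{a_0}$ and $H=S_{b_0}$. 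Hence $\{T_{a_0}S_{b_0}\}$ is also a basis of $\mathcal{F}(\mathcal{H})$, and the two filters coincide.

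For the forward implication, assuming $f_a\to_{\mathcal{C}} f$, take any net $x_b\to x$ and set $\mathcal{H}=\langle x_b\rangle_b^{\uparrow}$, so that $\mathcal{H}\to x$; continuous convergence of $\mathcal{F}$ gives $\mathcal{F}(\mathcal{H})\to f(x)$, which by the identity is exactly the assertion that $\langle f_a(x_b)\rangle_{\langle a,b\rangle}\to f(x)$. For the converse I would invoke Proposition \ref{prop124}: an arbitrary $\mathcal{H}\in\textsc{Fil}^*(X)$ with $\mathcal{H}\to x$ is the induced filter of some net $\langle x_b\rangle_b$, and since convergence of a net depends only on its induced filter we have $x_b\to x$. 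The hypothesis then yields $\langle f_a(x_b)\rangle_{\langle a,b\rangle}\to f(x)$, i.e. $\mathcal{F}(\mathcal{H})\to f(x)$ by the identity; as $x$ and $\mathcal{H}$ were arbitrary, $\mathcal{F}$ converges continuously to $f$, that is $f_a\to_{\mathcal{C}} f$.

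The only genuine obstacle is the key identity $\mathcal{F}(\mathcal{H})=\langle f_a(x_b)\rangle_{\langle a,b\rangle}^{\uparrow}$; everything else is bookkeeping with the net/filter dictionary. The point to verify with care is that the product-order tails coincide with the sets $FH$ arising already from the two separate tail bases, rather than from all members of $\mathcal{F}$ and $\mathcal{H}$ — this is precisely what makes the evaluation net over $\mathbb{A}\times\mathbb{B}$ the right object, and it is also where directedness of the product order is used.
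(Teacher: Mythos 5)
Your proof is correct and takes essentially the same route as the paper's: both reduce the statement to the filter definition of continuous convergence via the identity $\mathcal{F}(\mathcal{H})=\langle f_a(x_b)\rangle_{\langle a,b\rangle}^{\uparrow}$. You in fact supply the two details the paper leaves implicit, namely the tail-basis verification of that identity and the explicit use of Proposition \ref{prop124} in the converse direction, so your write-up is, if anything, more complete.
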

\begin{proof}
	Suppose that $\langle f_a\rangle_a\in\textsc{Nets}(\mathcal{C}(X,Y))$ is a net such that $f_a\to_{\mathcal{C}}f$. Let $\langle x_b\rangle _{b\in\mathbb{B}}$ be a net in $X$  such that $x_b\to x$. Consider $\mathcal{H}=\langle x_b\rangle _b^{\uparrow}$, then $\mathcal{H}\to x$. Since $f_a\to_{\mathcal{C}} f$, taking $\mathcal{F}=\langle f_a\rangle _a^{\uparrow}$ we have $\mathcal{F}(\mathcal{H})\to f(x)$. Note that $\mathcal{F}(\mathcal{H})=\langle f_a(x_b)\rangle _{\langle a,b\rangle } ^{\uparrow}$. This means that $f_a(x_b)\to f(x)$.  The proof of the converse is similar.
\end{proof}

\begin{figure}[H]
	\centering
	\includegraphics[width=9cm]{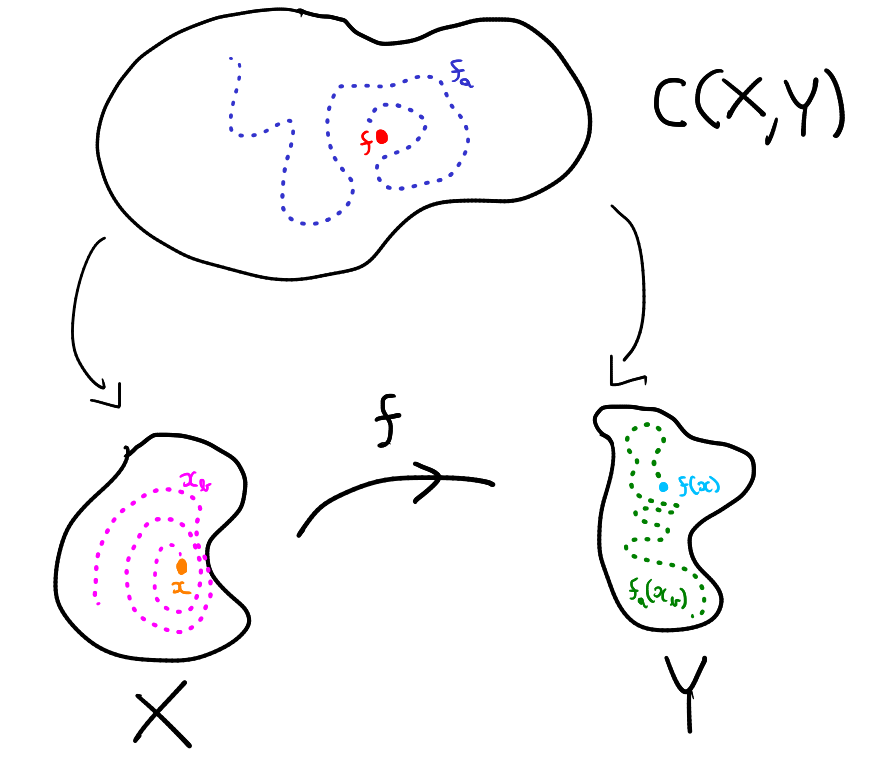}
	\caption{An illustration of how continuous convergence happens.}
\end{figure}

\begin{proposition}
If $X$ and $Y$ are limit spaces., then $\langle \mathcal{C}(X,Y),\mathcal{C}\rangle $ is a limit space.
\end{proposition}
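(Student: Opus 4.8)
The plan is to verify the three defining properties of a limit convergence—centeredness, isotonicity and stability—directly for the continuous convergence $\mathcal{C}$ on $\mathcal{C}(X,Y)$, working throughout with the net characterization established in the preceding proposition: a net $\langle f_a\rangle_{a\in\mathbb{A}}$ in $\mathcal{C}(X,Y)$ $\mathcal{C}$-converges to $f$ if and only if $\langle f_a(x_b)\rangle_{\langle a,b\rangle\in\mathbb{A}\times\mathbb{B}}\to f(x)$ in $Y$ for every net $\langle x_b\rangle_{b\in\mathbb{B}}\to x$ in $X$. The guiding idea is that ``evaluating at a convergent net'' transports each structural feature of a net in $\mathcal{C}(X,Y)$ to the corresponding feature of the evaluated net in $Y$, so that every property reduces to the same property of $Y$.

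For centeredness I would take a constant net $\langle f\rangle_d$ and an arbitrary net $\langle x_b\rangle_b\to x$ in $X$. The evaluated net $\langle f(x_b)\rangle_{\langle d,b\rangle}$ induces the same filter as $f\circ\langle x_b\rangle_b$, and since $f\in\mathcal{C}(X,Y)$ is continuous we have $f\circ\langle x_b\rangle_b\to f(x)$; because a preconvergence depends only on the induced filter, this gives $\langle f(x_b)\rangle_{\langle d,b\rangle}\to f(x)$, hence $\langle f\rangle_d\to_{\mathcal{C}} f$. For isotonicity, suppose $\langle g_c\rangle_c$ is a subnet of $\langle f_a\rangle_a$ with $\langle f_a\rangle_a\to_{\mathcal{C}} f$. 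Given $\langle x_b\rangle_b\to x$, the key step is to check that $\langle g_c(x_b)\rangle_{\langle c,b\rangle}$ is a subnet of $\langle f_a(x_b)\rangle_{\langle a,b\rangle}$: for an index $\langle a_0,b_0\rangle$, use $\langle f_a\rangle^{\uparrow}\subseteq\langle g_c\rangle^{\uparrow}$ to find $c_0$ with $\{g_c:c\geq c_0\}\subseteq\{f_a:a\geq a_0\}$, so that the tail determined by $\langle c_0,b_0\rangle$ lands inside the tail of the $f$-net determined by $\langle a_0,b_0\rangle$. Isotonicity of $Y$ then yields $\langle g_c(x_b)\rangle\to f(x)$, hence $\langle g_c\rangle_c\to_{\mathcal{C}} f$.

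For stability I would assume, using the remark that permits passing to a common domain, that $\langle f_a\rangle_a$ and $\langle g_a\rangle_a$ share a domain $\mathbb{A}$, that both $\mathcal{C}$-converge to $f$, and that $\langle h_a\rangle_a$ is a mixing, so $h_a\in\{f_a,g_a\}$ for each $a$. Fixing a net $\langle x_b\rangle_b\to x$ in $X$, one observes at once that $h_a(x_b)\in\{f_a(x_b),g_a(x_b)\}$ for every $\langle a,b\rangle$, so $\langle h_a(x_b)\rangle_{\langle a,b\rangle}$ is a mixing of the two evaluated nets $\langle f_a(x_b)\rangle_{\langle a,b\rangle}$ and $\langle g_a(x_b)\rangle_{\langle a,b\rangle}$, each of which converges to $f(x)$. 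Stability of $Y$ then gives $\langle h_a(x_b)\rangle\to f(x)$, that is, $\langle h_a\rangle_a\to_{\mathcal{C}} f$.

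I expect the only genuinely delicate point to be the bookkeeping with the product directed set $\mathbb{A}\times\mathbb{B}$ in the isotonicity step, namely verifying the tail inclusion that certifies the subnet relation; centeredness and stability are essentially term-by-term observations once the net characterization is in hand. It is worth noting that all three verifications use only the limit-space structure of $Y$ (continuity of $f$, isotonicity, and stability), so the hypothesis that $X$ is a limit space serves only to ensure that $\mathcal{C}(X,Y)$ and its continuous convergence are the intended objects.
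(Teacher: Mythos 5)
Your proposal is correct and follows essentially the same route as the paper's net-based proof: in each of the three verifications you evaluate along a convergent net in $X$ and transfer centeredness, isotonicity, and stability from $Y$ via the subnet and mixing structure of the evaluated nets. The only cosmetic difference is in the centeredness step, where you observe that the evaluated net induces the same filter as $f\circ\langle x_b\rangle_b$, while the paper treats it as a subnet and invokes isotonicity of $Y$; both are valid.
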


\begin{proof}[Proof 1 (Filters)]
	\
	
	\begin{enumerate}
		\item (Centerness )Let  $g \in \mathcal{C}(X, Y)$ be a continuous function and $\mathcal{H}$ be a proper filter on $X$ such that $\mathcal{H} \to x$. Note that $g(\mathcal{H}) \subseteq \mathfrak{u}_g(\mathcal{H})$. Since $g$ is continuous, it happens that $g(\mathcal{H}) \to g(x)$. By isotonicity of the preconvergence on $Y$, we have $\mathfrak{u}_g(\mathcal{H}) \to g(x)$. Then $\mathfrak{u}_g \to_{\mathcal{C}} g$.
		 
		\item (Isotonicity) Let $\mathcal{A}$ and $\mathcal{B}$ be propers filters on $\mathcal{C}(X,Y)$ such that $\mathcal{A}\subseteq\mathcal{B}$ and $\mathcal{A}\to _{\mathcal{C}} f$. Given a proper filter $\mathcal{H}$ on $X$ such that $\mathcal{H}\to x$, it hapens that $\mathcal{A}(\mathcal{H})\to f(x)$. Since $\mathcal{A}\subseteq\mathcal{B}$, it follows that $\mathcal{A}(\mathcal{H})\subseteq\mathcal{B}(\mathcal{H})$. By isotonicity of the preconvergence on $Y$, $\mathcal{B}(\mathcal{H})\to f(x)$. It follows that $\mathcal{B}\to_{\mathcal{C}}f$.
		\item (Stability) Let $\mathcal{A}$ and $\mathcal{B}$ be propers filters on $\mathcal{C}(X,Y)$ such that $\mathcal{A}\to_{\mathcal{C}}f$ and $\mathcal{B}\to _{\mathcal{C}} f$. Given a proper filter $\mathcal{H}$ in $X$ such that $\mathcal{H}\to x$, it happens that $\mathcal{A}(\mathcal{H})\to f(x)$ and $\mathcal{B}(\mathcal{H})\to f(x)$. Note  that $(\mathcal{A}\cap\mathcal{B})(\mathcal{H})=\mathcal{A}(\mathcal{H})\cap \mathcal{B}(\mathcal{H})$. Since the preconvergence on $Y$ is stable, it follows that $\mathcal{A}(\mathcal{H})\cap \mathcal{B}(\mathcal{H})\to f(x)$. 
	\end{enumerate}
\end{proof}
\begin{proof}[Proof 2 (Nets)]
	\
	\begin{enumerate}
		\item (Centerness) Given $\langle f_a\rangle_{a\in\mathbb{A}}\in\textsc{Nets}(\mathcal{C}(X,Y)$ a net such that $f_a=f$ for all $a\in\mathbb{A}$. Let us show that $f_a\to_{\mathcal{C}} f$. If $\langle x_b\rangle_{b\in\mathbb{B}}$ is a net in $X$ such that $x_b\to x$, notice that $\langle f_a(x_b)\rangle_{\langle a,b\rangle\in\mathbb{A}\times\mathbb{B}}$ is subnet of $\langle f(x_b)\rangle_{b\in\mathbb{B}}$. Since $f$ is continuous, it follows that $f(x_b)\to f(x)$. By isotonicity of the preconvergence it happens that $f_a(x_b)\to f(x)$. This means that $f_a\to_{\mathcal{C}} f$.
		\item (Isotonicity) If $\langle g_d\rangle_{d\in\mathbb{D}}$ is a subnet of $\langle f_a\rangle_{a\in\mathbb{A}}$ and $f_a\to_{\mathcal{C}} f$. Let $\langle x_b\rangle_{b\in\mathbb{B}}$ be a net on $X$ such that $x_b\to x$, we have $f_a(x_b)\to f(x)$. Note that $\langle g_d(x_b)\rangle_{\langle d,b\rangle\in\mathbb{D}\times \mathbb{B}}$ is a subnet of $\langle f_a(x_b)\rangle_{\langle a,b\rangle\in\mathbb{A}\times \mathbb{B}}$. It follows that $g_d(x_d)\to f(x)$. This proves that $g_d\to_{\mathcal{C}} f$.
		\item(Stability) If $\langle f_a\rangle_{a\in\mathbb{A}}$ and $\langle g_a\rangle_{a\in\mathbb{A}}$ are nets on $\mathcal{C}(X,Y)$ such that $f_a,g_a\to_{\mathcal{C}} f$ and $\langle \rho_a\rangle_{a\in\mathbb{A}}$ is a mixing of this nets. Given a net $\langle x_b\rangle_{b\in\mathbb{B}}$ on $X$, such that $x_b\to x$, we have $f_a(x_b),g_a(x_b)\to f(x)$. Since $\langle \rho_a(x_b)\rangle_{\langle a,b\rangle \in\mathbb{A}\times\mathbb{B}}$ is a mixing of $\langle f_a(x_b)\rangle_{\langle a,b\rangle \in\mathbb{A}\times\mathbb{B}}$ and $\langle g_a(x_b)\rangle_{\langle a,b\rangle \in\mathbb{A}\times\mathbb{B}}$, it follows that $\rho_a(x_b)\to f(x)$. This proves that $\rho_a\to_{\mathcal{C}} f$.
	\end{enumerate}
\end{proof}
\begin{proposition}
	The continuous convergence is the coarsest preconvergence such that the evaluation function
	$$\begin{array}{rcl}
		\hbox{ev}: \mathcal{C}(X,Y)\times X & \to     & Y \\ 
		\langle f,x\rangle  & \mapsto & 	f(x)\\
	\end{array}$$ is continuous.
\end{proposition}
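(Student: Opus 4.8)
The plan is to split the minimality assertion into two halves: first, that $\hbox{ev}$ is continuous when $\mathcal{C}(X,Y)$ carries the continuous convergence $\to_{\mathcal{C}}$; second, that any preconvergence $L$ on $\mathcal{C}(X,Y)$ for which $\hbox{ev}$ is continuous satisfies $\mathcal{C}\leq L$, i.e. $\to_{\mathcal{C}}$ is coarser than $L$. Throughout I would argue with nets, using the product characterization (a net in $\mathcal{C}(X,Y)\times X$ converges iff it converges coordinatewise) together with the net description of $\to_{\mathcal{C}}$ from the preceding proposition.

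For continuity of $\hbox{ev}$, take a net $\langle\langle f_d,x_d\rangle\rangle_{d\in\mathbb{D}}$ in the product converging to $\langle f,x\rangle$, so that $f_d\to_{\mathcal{C}} f$ and $x_d\to x$ over the \emph{same} directed set $\mathbb{D}$. Since $f_d\to_{\mathcal{C}} f$ and $\langle x_d\rangle_d$ converges to $x$, the preceding proposition yields $\langle f_a(x_b)\rangle_{\langle a,b\rangle\in\mathbb{D}\times\mathbb{D}}\to f(x)$. The key observation is that the diagonal net $\langle f_d(x_d)\rangle_{d\in\mathbb{D}}=\hbox{ev}\circ\langle\langle f_d,x_d\rangle\rangle_d$ is a subnet of this doubly-indexed net: given any index $\langle a_0,b_0\rangle$, choosing $d_0\in\mathbb{D}$ with $d_0\geq a_0,b_0$ forces every $f_d(x_d)$ with $d\geq d_0$ to lie in the tail determined by $\langle a_0,b_0\rangle$, so the filter inclusion $\langle f_a(x_b)\rangle^{\uparrow}\subseteq\langle f_d(x_d)\rangle^{\uparrow}$ defining a subnet holds. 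Isotonicity of the convergence on $Y$ then gives $f_d(x_d)\to f(x)$, which is exactly continuity of $\hbox{ev}$.

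For minimality, let $L$ be any preconvergence on $\mathcal{C}(X,Y)$ making $\hbox{ev}$ continuous, and suppose $\langle f_a\rangle_{a\in\mathbb{A}}\to_L f$. To show $f_a\to_{\mathcal{C}} f$, take an arbitrary net $\langle x_b\rangle_{b\in\mathbb{B}}\to x$ in $X$ and form the padded net $\langle\langle f_a,x_b\rangle\rangle_{\langle a,b\rangle\in\mathbb{A}\times\mathbb{B}}$ in the product. Because padding an index set leaves the induced filter unchanged (the $\tilde\varphi$ construction noted in the remark on mixing of nets with different domains), the first-coordinate net $\langle f_a\rangle_{\langle a,b\rangle}$ has the same induced filter as $\langle f_a\rangle_a$ and hence still $L$-converges to $f$, while $\langle x_b\rangle_{\langle a,b\rangle}$ still converges to $x$; by the product characterization the padded net converges to $\langle f,x\rangle$. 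Continuity of $\hbox{ev}$ with respect to $L$ then gives $\langle f_a(x_b)\rangle_{\langle a,b\rangle}\to f(x)$, which is precisely the net criterion for $f_a\to_{\mathcal{C}} f$. Hence $\mathcal{C}\leq L$.

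The main obstacle is the bookkeeping with directed sets in the two opposite directions: in the continuity half one passes from a single diagonal net to a doubly-indexed net and must recognize the diagonal as an Aarnes--Andenaes subnet, whereas in the minimality half one goes the other way, padding a singly-indexed net to a doubly-indexed one without disturbing its induced filter. Once the subnet relation and the filter-invariance under padding are in hand, isotonicity and the product and net characterizations of $\to_{\mathcal{C}}$ finish both directions.
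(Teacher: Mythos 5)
Your proposal is correct and follows essentially the same route as the paper's net-based proof: for continuity of $\hbox{ev}$ you pass to the doubly-indexed net $\langle f_a(x_b)\rangle_{\langle a,b\rangle}$ and recognize the diagonal as a subnet, and for minimality you pad $\langle f_a\rangle_a$ and $\langle x_b\rangle_b$ to a common index set and apply continuity of $\hbox{ev}$, exactly as the paper does. The only difference is that you spell out the subnet verification and the filter-invariance of padding, which the paper leaves implicit.
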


\begin{proof}[Proof 1 (Filters)]
	Let $\mathcal{G}$ be a filter on $\mathcal{C}(X,Y)\times X$ such that $\mathcal{G}\to \langle f,x\rangle $ in the product preconvergence. Our goal is show that $\hbox{ev}$ is continuous, that is, $\hbox{ev}(\mathcal{G})\to f(x)$ in $Y$. Consider the filters $\mathcal{F}=\pi_{\mathcal{C}(X,Y)}(\mathcal{G})$ and $\mathcal{H}=\pi_X(\mathcal{G})$. By definition of product preconvergence, it happens that $\mathcal{F}\to_{\mathcal{C}} f$ and $\mathcal{H}\to x$. Since $\mathcal{F}\to_{\mathcal{C}} f$, it follows that  $\mathcal{F}(\mathcal{H})\to f(x)$ in $Y$. Let us show that $\mathcal{F}(\mathcal{H})\subseteq \hbox{ev}(\mathcal{G})$. If $A\in\mathcal{F}(\mathcal{H})$, there are $F\in\mathcal{F}$ and $H\in\mathcal{H}$ such that $FH\subseteq A$, where $F=\pi_{\mathcal{C}(X,Y)}[G_0]$ and $H=\pi_{X}[G_1]$ for some $G_0,G_1\in\mathcal{G}$. Then $G=G_0\cap G_1\in \mathcal{G}$ and $\hbox{ev}[G]\subseteq FH$. It follows that $A\in \hbox{ev}(\mathcal{G})$. This means that $\mathcal{F}(\mathcal{H})\subseteq\hbox{ev}(\mathcal{G})$. Since the preconvergence on $Y$ is isotone, this proves that $\hbox{ev}$ is continuous. It remains to show that the continuous convergence is the coarsest preconvergence such that $\hbox{ev}$ is continuous. Let $\lambda$ be a preconvergence such that $\hbox{ev}$ is continuous and $\mathcal{F}$ be a filter on $\mathcal{C}(X,Y)$ such that $\mathcal{F}\to_{\lambda} f$. Given a filter $\mathcal{H}$ in $X$ such that $\mathcal{H}\to x$, consider the filter  $$\mathcal{F}\otimes \mathcal{H}=\{F\times H:F\in\mathcal{F},H\in\mathcal{H}\}^{\uparrow}$$
	
	Note that $\hbox{ev}(\mathcal{F}\otimes \mathcal{H})=\mathcal{F}(\mathcal{H})$, $\mathcal{F}\subseteq\pi_{\mathcal{C}(X,Y)}(\mathcal{F}\otimes \mathcal{H})$ and $\mathcal{H}\subseteq \pi_X(\mathcal{F}\otimes \mathcal{H})$. Since the preconvergences on $X$ and $Y$ are isotone, we have $\mathcal{F}\otimes\mathcal{H}\to \langle f,x\rangle $. Follows from continuity of $\hbox{ev}$ that $\hbox{ev}(\mathcal{F}\otimes \mathcal{H})=\mathcal{F}(\mathcal{H})\to f(x)$. This proves that $\mathcal{F}\to_{\mathcal{C}}f$.
\end{proof}

\begin{proof}[Proof 2 (Nets)]
	If $\langle f_d,x_d\rangle_{d\in\mathbb{D}}$ is a net in $\mathcal{C}(X,Y)\times X$ converging to $\langle f,x\rangle$, it happens that $f_d\to_{\mathcal{C}} f$ and $x_d\to x$. It follows that $\langle f_d(x_{d'})\rangle_{d,d'\in\mathbb{D}}$ converges to $f(x)$ in $Y$. Since $\langle f_d(x_d)\rangle_{d\in\mathbb{D}}$ is subnet of $\langle f_d(x_{d'})\rangle_{d,d'\in\mathbb{D}}$, we have $\hbox{ev}(f_d,x_d)=f_d(x_d)\to \hbox{ev}(f,x)=f(x)$. This proves that $\hbox{ev}$ is continuous.  It remains to show that the continuous convergence is the coarsest preconvergence such that $\hbox{ev}$ is continuous. Let $\lambda$ be a preconvergence which makes the evaluation $\hbox{ev}$ continuous, $\langle f_a\rangle _{a\in\mathbb{A}}\in\textsc{Nets}(\mathcal{C}(X,Y))$ be a net such that $f_a\to_{\lambda} f$ and  $\langle x_b\rangle_{b\in\mathbb{B}}$ be a net in $X$ such that $x_b\to x$. The net  $\langle f_a,x_b\rangle_{\langle a,b\rangle\in\mathbb{A}\times \mathbb{B}}$ converges to $\langle f,x\rangle$. Since $\hbox{ev}$ is continuous, it follows that $\hbox{ev}\circ \langle f_a,x_b\rangle_{\langle a,b\rangle\in\mathbb{A}\times \mathbb{B}}=\langle f_a(x_b)\rangle_{\langle a,b\rangle\in\mathbb{A}\times \mathbb{B}}\to f(x)$. Then$f_a\to_{\mathcal{C}} f$. This means that $\lambda \geq \mathcal{C}$.
\end{proof}

\begin{proposition}
	Let $X,Y$ and $Z$ be limit spaces. For every continuous function $h:Z\times X\to Y$ there is a unique continuous function $\tilde{h}:Z\to \mathcal{C}(X,Y)$ such that the following diagram commutes
	
	\[\begin{tikzcd}[ampersand replacement=\&,cramped]
		{\mathcal{C}(X,Y)} \& {\mathcal{C}(X,Y)\times X} \& Y \\
		Z \& {Z\times X}
		\arrow["\hbox{ev}", from=1-2, to=1-3]
		\arrow["{\tilde{h}}", dashed, from=2-1, to=1-1]
		\arrow["{\tilde{h}\times 1_X}", dashed, from=2-2, to=1-2]
		\arrow["h"', from=2-2, to=1-3]
	\end{tikzcd}\]
	\label{exp}
\end{proposition}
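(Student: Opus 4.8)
The plan is to construct $\tilde h$ by currying and then verify continuity at two levels, using the net characterizations of continuous convergence and of the product preconvergence established earlier in this section. First I would define $\tilde h(z)(x) = h\langle z,x\rangle$ for each $z\in Z$ and $x\in X$. To confirm that $\tilde h(z)$ is a legitimate element of $\mathcal{C}(X,Y)$, I would take a net $\langle x_b\rangle_{b\in\mathbb{B}}$ in $X$ with $x_b\to x$; since the preconvergence on $Z$ is centered, the constant net $\langle z\rangle_b$ converges to $z$, so by the product preconvergence the net $\langle z,x_b\rangle_{b}$ converges to $\langle z,x\rangle$ in $Z\times X$, and continuity of $h$ gives $\tilde h(z)(x_b)=h\langle z,x_b\rangle\to h\langle z,x\rangle=\tilde h(z)(x)$. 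Hence each $\tilde h(z)$ is continuous.

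The main step is to show that $\tilde h\colon Z\to\mathcal{C}(X,Y)$ is itself continuous. Let $\langle z_a\rangle_{a\in\mathbb{A}}$ be a net in $Z$ with $z_a\to z$. By the net characterization of continuous convergence (the first proposition of this section), it suffices to prove that $\langle \tilde h(z_a)(x_b)\rangle_{\langle a,b\rangle\in\mathbb{A}\times\mathbb{B}}\to\tilde h(z)(x)$ for every net $\langle x_b\rangle_{b\in\mathbb{B}}$ with $x_b\to x$. The key point is that the double-indexed net $\langle z_a,x_b\rangle_{\langle a,b\rangle}$ converges to $\langle z,x\rangle$ in $Z\times X$: its projection onto $Z$ is the net $\langle z_a\rangle_{\langle a,b\rangle}$, which induces the same filter as $\langle z_a\rangle_a$ (as in the remark on $\tilde\varphi,\tilde\psi$) and hence converges to $z$ by isotonicity, and likewise the projection onto $X$ converges to $x$. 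Since $\tilde h(z_a)(x_b)=h\langle z_a,x_b\rangle$, continuity of $h$ yields $\tilde h(z_a)(x_b)\to h\langle z,x\rangle=\tilde h(z)(x)$, which is exactly $\tilde h(z_a)\to_{\mathcal{C}}\tilde h(z)$. I expect this verification, namely the convergence of the product net through its two projections, to be the only place demanding real care.

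Finally the diagram commutes by construction: for $\langle z,x\rangle\in Z\times X$ we have $(\hbox{ev}\circ(\tilde h\times 1_X))\langle z,x\rangle=\hbox{ev}\langle\tilde h(z),x\rangle=\tilde h(z)(x)=h\langle z,x\rangle$. For uniqueness, if $g\colon Z\to\mathcal{C}(X,Y)$ is continuous and satisfies $\hbox{ev}\circ(g\times 1_X)=h$, then $g(z)(x)=\hbox{ev}\langle g(z),x\rangle=h\langle z,x\rangle=\tilde h(z)(x)$ for all $z\in Z$ and $x\in X$, so $g(z)=\tilde h(z)$ for every $z$, whence $g=\tilde h$.
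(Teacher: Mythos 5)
Your proposal is correct and follows essentially the same route as the paper's proof: currying $h$, checking each $\tilde h(z)$ is continuous via a constant net in $Z$, and establishing continuity of $\tilde h$ by reindexing the two nets over the product $\mathbb{A}\times\mathbb{B}$ so that the pair net converges in $Z\times X$. Your explicit pointwise argument for uniqueness is slightly more detailed than the paper's one-line appeal to the commutativity of the diagram, but the substance is identical.
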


\begin{proof}
The existence and uniqueness of $\tilde{h}$ it is guaranteed by the commutativity of the diagram and the universal property of the cartesian product. Then  it remains to show that $\tilde{h}$ is continuous.  Indeed, the function we are looking for is

$$	\begin{array}{rcl}
	\tilde{h}: Z & \to     &\mathcal{C}(X,Y)\\ 
	z & \mapsto & h(z,\bullet)  \\
\end{array}$$
where 
$$	\begin{array}{rcl}
	h(z,\bullet): X & \to     &Y\\ 
	x & \mapsto & h(z,x)  \\
\end{array}$$
We have to prove that $\tilde{h}$ and $h(z,\bullet)$ is continuous for every $z\in Z$. Let $\langle x_b\rangle _{b\in\mathbb{B}}$ be a net on $X$ such that $x_b\to x$. Consider the constant net $\langle z\rangle _{b\in\mathbb{B}}$ on $Z$. It follows that $\langle z_b,x_b\rangle _{b\in\mathbb{B}}\to \langle z,x\rangle $. Since $h$ is continuous, we have $h(z_b,x_b)\to h(z,x)$. But $h(z,\bullet)(x_b)=h(z_b,x_b)=h(z,x_b)$ and $h(z,\bullet)(x)=h(z,x)$, then $h(z,\bullet)(x_b)\to h(z,\bullet)(x)$. This proves that $h(z,\bullet)$ is continuous. Now, we will show that $\tilde{h}$ is continuous. Let $\langle z_a \rangle _{a\in\mathbb{A}}$ be a net on $Z$ and $\langle x_b\rangle_{b\in\mathbb{B}}$ a net on $X$ such that $z_a\to z$ and $x_b\to x$. Consider the nets
$$	\begin{array}{rcl}
	\hat{z}: \mathbb{A}\times \mathbb{B} & \to     &Z\\ 
	\langle a,b\rangle  & \mapsto & z_a  \\
\end{array}$$ 
and 
$$	\begin{array}{rcl}
	\hat{x}: \mathbb{A}\times \mathbb{B} & \to     &X\\ 
	\langle a,b\rangle  & \mapsto & x_b \\
\end{array}$$
Since $\langle z_a\rangle ^{\uparrow}=\langle \hat{z}_{\langle a,b\rangle }\rangle ^{\uparrow}$ and $\langle x_b\rangle ^{\uparrow}=\langle \hat{x}_{\langle a,b\rangle }\rangle ^{\uparrow}$, we have  $\hat{z}_{\langle a,b\rangle }\to z$ and $\hat{x}_{\langle a,b\rangle }\to x$. It follows that $ \langle z_a,x_b\rangle_{\langle a,b\rangle }\to \langle z,x\rangle $. Since $h$ is continuous, we have $\langle h(z_a,x_b)\rangle _{\langle a,b\rangle }\to h(z,x)$. Note that $h(z_a,x_b)=h(z_a,\bullet)(x_b)=\tilde{h}(z_a)(x_b))$, so $\tilde{h}(z_b)(x_a)\to \tilde{h}(z)(x)$. It follows that $\tilde{h}(z_a)\to_{\mathcal{C}} \tilde{h}(z)$. This means that $\tilde{h}$ is continuous.

\end{proof}

\begin{remark}
	\label{exptop}
In the category of Hausdorff topological spaces, the set of continuous functions $\mathcal{C}(X, Y)$ is an exponential object if and only if $X$ is locally compact, see~\cite{schechter1996handbook}.
\end{remark}
\begin{proposition} The map
	\[\begin{split}\circ : \mathcal{C}(X,Y)\times \mathcal{C}(Y,Z)&\to \mathcal{C}(X,Z)\\
		\apair{g,f}&\mapsto f\circ g\end{split}\]
	is continuous for every limit spaces $X$, $Y$ and $Z$.
\end{proposition}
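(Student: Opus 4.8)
The plan is to verify the continuity of $\circ$ directly through the net characterization of continuous convergence. I would start with a net $\langle g_a,f_a\rangle_{a\in\mathbb{A}}$ in $\mathcal{C}(X,Y)\times\mathcal{C}(Y,Z)$ converging to $\langle g,f\rangle$; by the description of the product preconvergence this is exactly the conjunction $g_a\to_{\mathcal{C}} g$ and $f_a\to_{\mathcal{C}} f$. To conclude that $\circ$ is continuous I must show $f_a\circ g_a\to_{\mathcal{C}} f\circ g$, and by the net characterization of continuous convergence this amounts to proving that, for an arbitrary net $\langle x_b\rangle_{b\in\mathbb{B}}$ in $X$ with $x_b\to x$, the diagonal net $\langle f_a(g_a(x_b))\rangle_{\langle a,b\rangle\in\mathbb{A}\times\mathbb{B}}$ converges to $f(g(x))$ in $Z$.

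The convergence would be produced by two successive applications of the characterization. First, applying $g_a\to_{\mathcal{C}} g$ to the net $x_b\to x$ gives $\langle g_a(x_b)\rangle_{\langle a,b\rangle\in\mathbb{A}\times\mathbb{B}}\to g(x)$ in $Y$. Regarding $\mathbb{A}\times\mathbb{B}$ as a single directed index set and $\eta_{\langle a,b\rangle}=g_a(x_b)$ as a net in $Y$ converging to $g(x)$, I then apply $f_{a'}\to_{\mathcal{C}} f$ to this net, obtaining $\langle f_{a'}(g_a(x_b))\rangle_{\langle a',\langle a,b\rangle\rangle\in\mathbb{A}\times(\mathbb{A}\times\mathbb{B})}\to f(g(x))$ in $Z$. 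Note that this triple-indexed net evaluates $f$ along one copy of $\mathbb{A}$ and $g$ along an independent copy.

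The remaining and most delicate point, which is the main obstacle, is that the net I actually need, the diagonal $\langle f_a(g_a(x_b))\rangle_{\langle a,b\rangle}$ in which the two copies of $\mathbb{A}$ are identified, is a subnet of the triple-indexed net; the bare triple-indexed convergence is not literally what is wanted, so passing to the diagonal must be justified rather than assumed. I would exhibit the map $\delta:\mathbb{A}\times\mathbb{B}\to\mathbb{A}\times(\mathbb{A}\times\mathbb{B})$ given by $\langle a,b\rangle\mapsto\langle a,\langle a,b\rangle\rangle$ and check that it is monotone and cofinal: monotonicity is immediate from the product order, and cofinality follows by choosing, for a given $\langle a',\langle a,b\rangle\rangle$, an upper bound $a_0$ of $a'$ and $a$ together with some $b_0\geq b$. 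A monotone cofinal map carries tails into tails, so every tail of the triple net contains a tail of the diagonal net; hence the filter induced by the triple net is contained in that induced by the diagonal net, which is precisely the subnet relation used in this text. Since $Z$ is a limit space its convergence is isotone, so a subnet of a convergent net converges to the same point (as in Proposition \ref{subnet}), and therefore the diagonal net also converges to $f(g(x))$. This establishes $f_a\circ g_a\to_{\mathcal{C}} f\circ g$, and hence the continuity of $\circ$.
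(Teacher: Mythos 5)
Your proposal is correct and follows essentially the same route as the paper's proof: apply the two continuous convergences in succession to obtain the triple-indexed net $\langle f_{a'}(g_a(x_b))\rangle$ converging to $f(g(x))$, observe that the diagonal net $\langle f_a(g_a(x_b))\rangle_{\langle a,b\rangle}$ is a subnet of it, and conclude by isotonicity. The only difference is cosmetic: you certify the subnet relation via a monotone cofinal map $\delta$, while the paper verifies the tail containment directly by picking $a_3\geq a_1,a_2$; both amount to the same inclusion of induced filters.
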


\begin{proof}
	If $\apair{g_a,f_a}_{a\in\mathbb{A}}$ converges to $\apair{g,f}$ in $\mathcal{C}(X,Y)\times\mathcal{C}(Y,Z)$ and $\langle x_b\rangle_{b\in\mathbb{B}}$ converges to $x$ in $X$, then $g_a(x_b)\to g(x)$ in $Y$. It follows that $f_{a'}(g_a(x_b))\to f(g(x))$ in $Z$. Note that $\apair{f_{a}(g_a(x_b))}_{\langle a,b\rangle\in\mathbb{A}\times\mathbb{B}}$ is a subnet of $\apair{f_{a'}(g_a(x_b))}_{\langle a',a,b\rangle \in\mathbb{A}\times\mathbb{A}\times\mathbb{B}}$. Indeed, for a tail $$\{f_{a'}(g_a(x_b)):\langle a',a,b\rangle\geq \langle a_1,a_2,b_1\rangle\} $$ of  $\apair{f_{a'}(g_a(x_b))}_{\langle a',a,b\rangle \in\mathbb{A}\times\mathbb{A}\times\mathbb{B}}$ it happens that $$\{f_{a}(g_a(x_b)):\langle a,b\rangle\geq \langle a_3,b_1\rangle \}\subseteq \{f_{a'}(g_a(x_b)):\langle a',a,b\rangle\geq \langle a_1,a_2,b_1\rangle\} $$ 
where $a_3\geq a_1,a_2$ . It follows that $f_a\circ g_a(x_b)\to f\circ g(x)$. Then $\circ (g_a,f_a)\to \circ (f,g)$. This means that $\circ$ is continuous.
\end{proof}
\section{Topological notions in convergence spaces}

In this section, we explore some topological notions in convergence spaces, such as the concepts of adherence, open cover, and compactness. The key idea is to use Propositions \ref{compact} and \ref{prop1.2.1} to generalize the notions of adherent point and compactness.

\begin{definition}
Let $\mathcal{F} \in \textsc{Fil}^*(X)$ be a filter and $S \subseteq X$ be a subset of $X$. We say that $\mathcal{F}$ and $S$ \textbf{mesh}, denoted by $\mathcal{F} \# S$, if $F \cap S \neq \emptyset$ for every $F \in \mathcal{F}$.
\end{definition}

\begin{example}
	Let $X$ be a topological space and $S\subseteq X$ be a subset, for each $x\in\overline{S}$ it happens that $\mathcal{N}_x$ and $S$ mesh. Indeed, recall that $x\in\overline{S}$ if and only $V\cap S\neq\emptyset$ for every neighborhood $V\in\mathcal{N}_x$ of $x$.
	\label{mesh}
\end{example}
\begin{definition}
	 Let $\langle X,L\rangle$ be a preconvergence space. The \textbf{$L$-adherence} of a subset $S\subseteq X$  is the set $$\hbox{adh}_L(S)=\bigcup_{\mathcal{F}\# S} L(\mathcal{F})$$ \label{symbol:adherence}
\end{definition}

\begin{remark}
	 The $L$-adherence of a subset generalizes the idea of closure of a subset of a topological space. Indeed, for a topological space $\langle X,\tau\rangle$, considering $L=\lim_\tau$, byb the Example \ref{mesh} and the definition of convergence of filters in topological spaces we have $\overline{S}=\hbox{adh}_L(S)$ 
	 
	 \label{reclosed}
\end{remark}

\begin{remark}
The same comment made for inherence applies to adherence. It is not generally true that a subset is contained in its adherence; for this, centrality is also needed. Indeed, suppose that $L$ is centered. Given a point $x\in S\subseteq X$, since $\mathfrak{u}_x\to_L x$ and $\mathfrak{u}_x\# S$, it happens that $x\in\hbox{adh}_L(S)$. This means that $S\subseteq \hbox{adh}_L(S)$.
\end{remark}
\begin{proposition}
	Let $L$ be a isotone preconvergence on $X$ and $S\subseteq X$ be a subset. A point $x\in\hbox{adh}_L(S)$ is adherent if and only if there is a net $\varphi\in\textsc{Nets}(S)$ such that $\varphi\to_L x$.
\end{proposition}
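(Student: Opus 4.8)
The plan is to prove both implications, with the isotonicity hypothesis doing the real work only in the forward direction. Recall that $x\in\hbox{adh}_L(S)$ means, by definition, that there is a filter $\mathcal{F}\in\textsc{Fil}^*(X)$ with $\mathcal{F}\#S$ and $\mathcal{F}\to_L x$; equivalently, since $L$ factors through the induced filter, some net $\eta$ with $\eta\to_L x$ has $\eta^{\uparrow}\#S$.

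For the easy implication, I would start from a net $\varphi\in\textsc{Nets}(S)$ with $\varphi\to_L x$ and check that $\varphi^{\uparrow}\#S$: every element of $\varphi^{\uparrow}$ contains a tail $\{\varphi_d:d\geq d'\}$, and since $\varphi$ takes its values in $S$ this tail is a nonempty subset of $S$, hence meets $S$. Thus $\varphi^{\uparrow}$ meshes with $S$ and $\varphi^{\uparrow}\to_L x$, which places $x$ in the union defining $\hbox{adh}_L(S)$.

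For the forward implication, fix $\mathcal{F}\#S$ with $\mathcal{F}\to_L x$. The idea is to pass to the trace of $\mathcal{F}$ on $S$: because $\mathcal{F}\#S$, the family $\{F\cap S:F\in\mathcal{F}\}$ consists of nonempty subsets of $S$ and is closed under finite intersections, so it is a base of a proper filter $\mathcal{G}$, and from $F\cap S\subseteq F$ I get $\mathcal{F}\subseteq\mathcal{G}$, so $\mathcal{G}$ is finer than $\mathcal{F}$. I would then build a net living in $S$ whose induced filter is exactly $\mathcal{G}$, imitating the construction of Proposition \ref{prop124}: take $\mathbb{D}=\{\langle s,F\rangle:F\in\mathcal{F},\ s\in F\cap S\}$ ordered by $\langle s,F\rangle\leq\langle s',F'\rangle$ iff $F'\subseteq F$ (directedness uses $\mathcal{F}\#S$ to select a point of each $F\cap S$), and set $\varphi\langle s,F\rangle=s$. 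A direct computation shows that the tail above $\langle s_0,F_0\rangle$ is precisely $F_0\cap S$, so $\varphi^{\uparrow}=\mathcal{G}$ and $\varphi\in\textsc{Nets}(S)$.

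Finally I would invoke isotonicity. Since $\mathcal{F}=\eta^{\uparrow}\subseteq\mathcal{G}=\varphi^{\uparrow}$ for any net $\eta$ representing $\mathcal{F}$ (one exists by Proposition \ref{prop124}, and $\eta\to_L x$ because $\mathcal{F}\to_L x$), the net $\varphi$ is a subnet of $\eta$ in the sense of the text. Isotonicity then gives $\varphi\to_L x$, completing the proof. The main obstacle is exactly this passage: a filter that merely meshes with $S$ need not be the induced filter of any net contained in $S$, and refining it to its trace on $S$ is harmless only because $L$ is isotone — which is precisely where the hypothesis is needed.
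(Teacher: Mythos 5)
Your proposal is correct and follows essentially the same route as the paper: both directions hinge on constructing a choice net with values in $S$ whose induced filter refines $\mathcal{F}$ and then invoking isotonicity, the only cosmetic difference being that you index the net by pairs $\langle s,F\rangle$ and identify $\varphi^{\uparrow}$ exactly as the trace filter, whereas the paper indexes by $\mathcal{F}$ itself under reverse inclusion and only checks $\mathcal{F}\subseteq\varphi^{\uparrow}$. Your explicit remark that isotonicity is applied via a net $\eta$ representing $\mathcal{F}$ is a welcome clarification of a step the paper leaves implicit.
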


\begin{proof}
	If $x\in\hbox{adh}_L(S)$ there is a filter $\mathcal{F}\in\textsc{Fil}^*(X)$ such that $\mathcal{F}\to_L x$ and $\mathcal{F}\# S$. Since for every $F\in\mathcal{F}$ there is $z_F\in F\cap S$, we can construct  a net $\varphi:\mathcal{F}\to S$ such that $\varphi (F)=z_F$ where $\mathcal{F}$ is directed by reverse inclusion. Notice that $\{z_D\in S: D\subseteq F\}\subseteq F$ for every $F\in\mathcal{F}$, then $\mathcal{F}\subseteq \varphi^{\uparrow}$. By isotonicity, it happens that $\varphi\to_L x$. The converse it is imediate.
\end{proof}

\begin{proposition}
	Let $\langle X,L\rangle$ be a preconvergence space. For subsets $A,B\subseteq X$ holds
	
	\begin{enumerate}[i)]
		\item If $A\subseteq B$, then $\hbox{inh}_L(A)\subseteq \hbox{inh}_L(B)$ and $\hbox{adh}_L(A)\subseteq \hbox{adh}_L(B)$.
		\item $\hbox{adh}_L(A\cup B)=\hbox{adh}_L(A)\cup\hbox{adh}_L(B)$
		\item $\hbox{inh}_L(A\cap B)=\hbox{inh}_L(A)\cap\hbox{inh}_L(B)$
	\end{enumerate}
\end{proposition}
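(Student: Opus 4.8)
The plan is to lean entirely on the two defining closure properties of filters—upward closure and closure under finite intersection—together with the formal duality between $\hbox{inh}_L$ and $\hbox{adh}_L$. I would prove item (i) first, since it is both the cheapest and the ingredient that makes the inclusion ``$\supseteq$'' in (ii) and the inclusion ``$\subseteq$'' in (iii) automatic. For the inherence half, assume $A\subseteq B$ and $x\in\hbox{inh}_L(A)$; given any net $\varphi\to_L x$ we have $A\in\varphi^{\uparrow}$ by hypothesis, and since $\varphi^{\uparrow}$ is a filter with $A\subseteq B$, upward closure yields $B\in\varphi^{\uparrow}$, so $x\in\hbox{inh}_L(B)$. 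For the adherence half, if $x\in\hbox{adh}_L(A)$ pick a filter $\mathcal{F}\#A$ with $x\in L(\mathcal{F})$; from $F\cap A\subseteq F\cap B$ for each $F\in\mathcal{F}$ we obtain $\mathcal{F}\#B$, hence $x\in\hbox{adh}_L(B)$. Neither argument needs centeredness or isotonicity of $L$.

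Next I would dispatch (iii) and the easy inclusion of (ii). For (iii), the inclusion ``$\subseteq$'' is immediate from (i) applied to $A\cap B\subseteq A$ and $A\cap B\subseteq B$. For ``$\supseteq$'', take $x\in\hbox{inh}_L(A)\cap\hbox{inh}_L(B)$ and any net $\varphi\to_L x$; then $A\in\varphi^{\uparrow}$ and $B\in\varphi^{\uparrow}$, and closure of the filter $\varphi^{\uparrow}$ under intersection gives $A\cap B\in\varphi^{\uparrow}$, so $x\in\hbox{inh}_L(A\cap B)$. Dually, the inclusion ``$\supseteq$'' of (ii) follows at once by applying (i) to $A\subseteq A\cup B$ and $B\subseteq A\cup B$.

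The only real work is the inclusion ``$\subseteq$'' in (ii), and this is where I expect the main obstacle. The subtlety is that a filter $\mathcal{F}$ may mesh with $A\cup B$ without meshing with either piece separately, so one cannot naively split the union. The key observation is that a filter meshing with $A\cup B$ must mesh with at least one of $A$ or $B$: suppose $x\in\hbox{adh}_L(A\cup B)$, witnessed by $\mathcal{F}\#(A\cup B)$ with $x\in L(\mathcal{F})$, and suppose $\mathcal{F}$ fails to mesh with $A$. Then there is $F_0\in\mathcal{F}$ with $F_0\cap A=\emptyset$; for an arbitrary $F\in\mathcal{F}$ the set $F\cap F_0$ again lies in $\mathcal{F}$, so $(F\cap F_0)\cap(A\cup B)\neq\emptyset$, while $(F\cap F_0)\cap A\subseteq F_0\cap A=\emptyset$ forces $(F\cap F_0)\cap B\neq\emptyset$, hence $F\cap B\neq\emptyset$. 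Since $F$ was arbitrary this gives $\mathcal{F}\#B$. Thus $x\in\hbox{adh}_L(A)$ or $x\in\hbox{adh}_L(B)$, which completes the equality. The essential input here is precisely the stability of filters under finite intersection, which is what lets the witness $F_0$ be absorbed uniformly into every member $F$ of $\mathcal{F}$.
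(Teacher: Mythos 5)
Your proof is correct and follows essentially the same route as the paper: item (i) by upward closure of $\varphi^{\uparrow}$ and monotonicity of meshing, item (iii) by closure of filters under finite intersection, and the hard inclusion of (ii) by using a witness $F_0$ disjoint from $A$ intersected into every member of $\mathcal{F}$. The only cosmetic difference is that the paper phrases that last step contrapositively (no mesh with $A$ and no mesh with $B$ implies no mesh with $A\cup B$) while you argue it directly; the underlying use of finite-intersection stability is identical.
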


\begin{proof}
	\
	\begin{enumerate}[i)]
		\item Suppose that $A\subseteq B$. Let $x\in\hbox{inh}_L(A)$ be inherent point $\varphi\in\textsc{Nets}(X)$  be a net such that $\varphi\to_L x$, since $x\in\hbox{inh}_L(A)$, it happens that $A\in\varphi^{\uparrow}$. Then $B\in\varphi^{\uparrow}$, that is, $x\in\hbox{inh}_L(B)$. This means that $\hbox{inh}_L(A)\subseteq \hbox{inh}_L(B)$. Now, if $x\in\hbox{adh}_L(A)$, there is a filter $\mathcal{F}\in\textsc{Fil}^*(X)$ such that $\mathcal{F}\to_L x$ and $\mathcal{F}\# A$. Since $A\subseteq B$, we have $\mathcal{F}\# B$. It follows that $x\in\hbox{adh}_L(B)$. This proves that $\hbox{adh}_L(A)\subseteq\hbox{adh}_L(B)$.
		\item Since $A$ and $B$ are subsets of $ A\cup B$, by (i) we have $\hbox{adh}_L(A)\cup\hbox{adh}_L(B)\subseteq\hbox{adh}_L(A\cup B)$. If $x\notin\hbox{adh}_L(A)\cup\hbox{adh}_L(B)$ for every filter $\mathcal{F}\in\textsc{Fil}^*(X)$ such that $\mathcal{F}\to_L x$ it does not occur that $\mathcal{F}\# A $ or $\mathcal{F}\# B$. There are $F_1,F_2\in\mathcal{F}$ such that $F_1\cap A=\emptyset$ and $F_2\cap B=\emptyset$. It follows that $(F_1\cap F_2)\cap (A\cup B)=\emptyset$. Then $x\notin\hbox{adh}_L(A\cup B)$, that is, $\hbox{adh}_L(A\cup B)\subseteq\hbox{adh}_L(A)\cup \hbox{adh}_L(B)$. 
		
		\item Since $A\cap B$ is subset of $A$ and $B$, by (i) we have $\hbox{inh}_L(A\cap B)\subseteq\hbox{inh}_L(A)\cap\hbox{inh}_L(B)$. If $x\in \hbox{inh}_L(A)\cap\hbox{inh}_L(B)$ and $\varphi  \in\textsc{Nets}(X)$ is a net such that $\varphi\to_L x$, it happens that $A\in\varphi^{\uparrow}$ and $B\in\varphi^{\uparrow}$. Then $A\cap B\in\varphi^{\uparrow}$. It follows that $x\in\hbox{inh}_L(A\cap B)$. This proves that $\hbox{inh}_L(A)\cap\hbox{inh}_L(B)\subseteq\hbox{adh}_L(A\cap B)$.
	\end{enumerate}
\end{proof}

\begin{definition}
	Let $\langle X,L\rangle$ be a preconvergence space. We say that a subset $S\subseteq X$ is \textbf{$L$-closed} if $X\setminus S$ is $L$-open.
\end{definition}

\begin{proposition}
	Let $\langle X,L\rangle$ be a preconvergence space such that $L$ is isotone. A subset $S\subseteq  X$ is $L$-closed if and only if $L(\varphi)\subseteq S$ for every net $\varphi\in\textsc{Nets}(S)$.
\end{proposition}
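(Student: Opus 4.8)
The plan is to unfold the definition of $L$-closed and reduce everything to the definition of inherence. By definition $S$ is $L$-closed exactly when $X\setminus S$ is $L$-open, that is, when $X\setminus S\subseteq\hbox{inh}_L(X\setminus S)$. Recalling that $x\in\hbox{inh}_L(X\setminus S)$ means that every net $\varphi\to_L x$ satisfies $X\setminus S\in\varphi^{\uparrow}$ (some tail of $\varphi$ avoids $S$), I would prove the two implications separately. Throughout I treat a net $\varphi\in\textsc{Nets}(S)$ as a net in $X$ via the inclusion $S\subseteq X$, so that $L(\varphi)$ and $\varphi^{\uparrow}$ (a filter on $X$) make sense.

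For the forward direction, assume $S$ is $L$-closed and take any $\varphi\in\textsc{Nets}(S)$ together with $x\in L(\varphi)$, i.e. $\varphi\to_L x$. First I would argue by contradiction: if $x\notin S$, then $x\in X\setminus S\subseteq\hbox{inh}_L(X\setminus S)$, so the definition of inherence forces $X\setminus S\in\varphi^{\uparrow}$, giving a tail $\{\varphi_d:d\geq d'\}\subseteq X\setminus S$. Since the domain is directed and hence nonempty, this tail is nonempty; but every $\varphi_d$ lies in $S$, a contradiction. Thus $x\in S$, and since $x$ was an arbitrary limit we conclude $L(\varphi)\subseteq S$. Notice this half uses only the definitions, not isotonicity.

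For the converse, assume $L(\varphi)\subseteq S$ for every $\varphi\in\textsc{Nets}(S)$, fix $x\in X\setminus S$, and show $x\in\hbox{inh}_L(X\setminus S)$. I would take an arbitrary net $\varphi\to_L x$ and suppose, toward a contradiction, that $X\setminus S\notin\varphi^{\uparrow}$; this says no tail of $\varphi$ avoids $S$, i.e. the set $E=\{d\in\hbox{dom}(\varphi):\varphi_d\in S\}$ is cofinal in $\hbox{dom}(\varphi)$. The key construction is to restrict $\varphi$ to $E$: a cofinal subset of a directed set is itself directed (given $a,b\in E$, pick $c$ above both in the whole domain, then some $e\in E$ above $c$), so $\psi:=\varphi|_E\in\textsc{Nets}(S)$ is well defined. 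For each $a\in\hbox{dom}(\varphi)$, choosing $b'\in E$ with $b'\geq a$ gives $\{\psi_e:e\geq b'\}\subseteq\{\varphi_d:d\geq a\}$, so $\varphi^{\uparrow}\subseteq\psi^{\uparrow}$ and $\psi$ is a subnet of $\varphi$ in the sense of the text. Here isotonicity of $L$ enters: from $\varphi\to_L x$ it yields $\psi\to_L x$, so $x\in L(\psi)\subseteq S$ by hypothesis, contradicting $x\in X\setminus S$. Hence every such $\varphi$ has $X\setminus S\in\varphi^{\uparrow}$, proving $x\in\hbox{inh}_L(X\setminus S)$ and therefore that $X\setminus S$ is $L$-open.

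I expect the main obstacle to be precisely this subnet construction in the backward direction: one must verify that the cofinal index set $E$ is directed and that $\psi=\varphi|_E$ qualifies as a subnet under the Aarnes--Andenæs definition (the filter inclusion $\varphi^{\uparrow}\subseteq\psi^{\uparrow}$), after which isotonicity does the remaining work. The forward direction is essentially a direct unravelling of the definition of inherence.
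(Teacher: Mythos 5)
Your proposal is correct and follows essentially the same route as the paper: the forward direction unwinds the definition of inherence to derive a contradiction from a tail of a net in $S$ lying in $X\setminus S$, and the backward direction builds from a net $\varphi\to_L x$ with $X\setminus S\notin\varphi^{\uparrow}$ a subnet lying in $S$ and invokes isotonicity. The only (cosmetic) difference is the realization of that subnet — you restrict $\varphi$ to the cofinal index set $E=\{d:\varphi_d\in S\}$ and check $E$ is directed, while the paper keeps the original domain and reindexes via a choice of $d'\geq d$ with $\varphi_{d'}\in S$; both satisfy the filter-inclusion definition of subnet used in the text.
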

\begin{proof}
	Let $S$ be an $L$-closed and $\varphi\in\textsc{Nets}(S)$ be a net such that $\varphi\to_L x$. Since $X\setminus S$ is $L$-open, it happens that $X\setminus S\subseteq \hbox{inh}_L(X\setminus S)$. If $x\in X\setminus S$, then $X\setminus S\in\varphi^{\uparrow}$. But $\varphi\in\textsc{Nets}(S)$ is a net in $S$. This contradiction tells us that $x\in S$. This means that $L(\varphi)\subseteq S$. Conversely, suppose that $S\subseteq X$ is not $L$-closed, that is, $X\setminus S$ is not $L$-open. There is $x\in X\setminus S$ and a net $\varphi\in\textsc{Nets}(X)$ such that $\varphi\to_L x$, but $X\setminus S\notin\varphi^{\uparrow}$. For all $d\in\hbox{dom}(\varphi)$ there is $\varphi_{d'}\in S$ for some $d'\geq d$. The net $\psi:\hbox{dom}(\varphi)\to X$ such that $\psi_d=\varphi_{d'}$ is a net in $S$. Moreover, $\psi$ is subnet of $\varphi$. Since $L$ is isotone, it happens that $\psi\to_{L} x$. This means that $L(\psi)\not\subseteq S$.
	
\end{proof}

\begin{example}

Let us calculate the adherence of some subsets of the lollipop from Example \ref{lollipop}, with the goal of showing that its limit convergence is not topological.	Notice that $\hbox{adh}_\lambda(S\setminus D)=S$. Indeed, let $x\in S$, if $x\notin D$ consider the constant net $\langle x\rangle_d$ in $S\setminus D$ which converges to $x$. Otherwise, if $x\in D$, since $D$ is countable, for every neighborhood $V\subseteq X$ of $x$ it happens that $V\cap(S\setminus D)\neq\emptyset$. Then $x\in\overline{S\setminus D}$, that is there is a net $\varphi\in\textsc{Nets} (S\setminus D)$ which converges to $x$. But $x\neq p$, by definition of $\lambda$, it happens that $\varphi\to_{\lambda} x$. This means that $x\in\hbox{adh}_\lambda(S\setminus D)$. On other hand, if $x\in\hbox{adh}_\lambda(S\setminus D)$, there is a net $\varphi\in\textsc{Nets}(S\setminus D)$ such that $\varphi\to_{\lambda} x$. Suppose that $x\in C$, since $\varphi\in\textsc{Nets}(S\setminus D)$ it cannot happen that $C\in\varphi^{\uparrow}$ or $\varphi^{\uparrow}\# D$. This contradiction tells us that $x\in S$. Moreover, $\hbox{adh}_\lambda(S)=L$. Indeed,if  $x\in\hbox{adh}_\lambda(S)$, there is a net $\varphi\in\textsc{Nets}(S)$ such that $\varphi\to_\lambda x$. If $x=p$, then $x\in L$. Otherwise, if $x\neq p$, it happens that $\varphi\to_{\mathbb{R}^2} x$, that is, $\mathcal{N}_x\subseteq \varphi^{\uparrow}$. Since $\varphi$ is a net in $S$ it cannot happen that $x\in C$. Then $x\in L$. Now, if $x\in L$ and $x\neq p$ consider the constant net $\langle x\rangle_d$ is $S$. Otherwise, if $x=p$, since $D$ is dense there is a  net $\varphi\in\textsc{Nets}(D)$ such that $\varphi\to_{\mathbb{R}^2} x$  and $\varphi^\uparrow\# D$. It follows that $\varphi\to_{\lambda} x$. This proves that the adherence operator is not idempotent. The reader can verify that if a preconvergence is topological, then the closure operator is idempotent. This means that $\langle X,\lambda\rangle $ it is not topological.

\end{example}

Recall that a topological space is compact if and only if every open cover has a finite subcover. In the context of preconvergence spaces, what would compactness be? And what would an open cover be? Proposition \ref{compact} tells us how to characterize compactness it in terms of filter and nets.

\begin{definition}
	A preconvergence space $X$ is said to be \textbf{compact} if every net has a convergent subnet or, equivalently, if every proper filter is contained in a convergent proper filter.
\end{definition}

\begin{definition}
	Let $\langle X,L\rangle$ be a preconvergence space. A family $\mathcal{C}$ of subsets of $X$ is a \textbf{local convergence system} at $x\in X$ if for every net $\varphi\in\textsc{Nets}(X)$ such that $\varphi\to_L x$ there is $C\in\mathcal{C}$ such that $C\in\varphi^{\uparrow}$. We say that $\mathcal{C}$ is a \textbf{convergence system} if is a local convergence system for each $x\in X$.
\end{definition}

\begin{remark}
	The usual terminology is covering system, as in~\cite{ref4}. The choice of the term convergence system is due to the fact that, in some sense, the family is witnessing the convergence of nets.
\end{remark}

\begin{figure}[H]
	\centering
	\includegraphics[width=6cm]{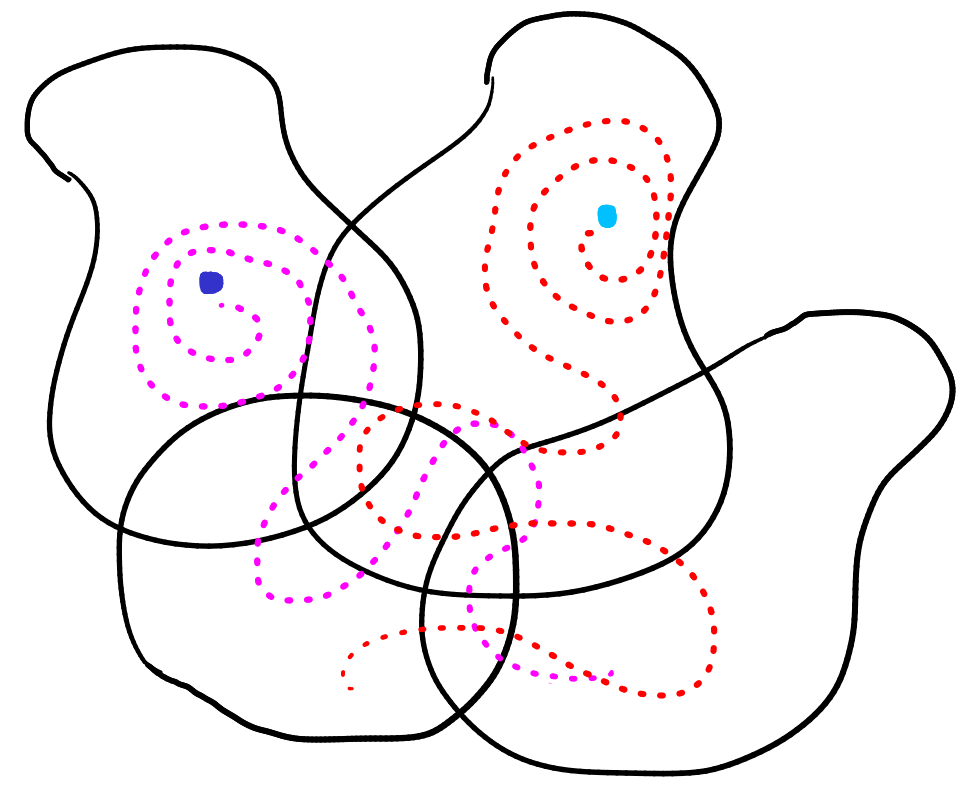}
	\caption{There is some element of the convergence system that contains a tail of the convergent nets.}
\end{figure}

\begin{example}
Every convergence system should be a cover, right? The problem is that this is not always true; for this, it is sufficient to have a centered preconvergence. Let $\langle X,L\rangle$  be a preconvergence space such that $L$ is centered and $\mathcal{C}$ be a convergence  system. For a point $x\in X$, since $L$ is centered, it happens that $\mathfrak{u}_x\to x$. Since $\mathcal{C}$ is a convergence system, it follows that there is $C\in\mathcal{C}$ such that $C\in\mathfrak{u}_x$, that is, $x\in C$. This proves that $\mathcal{C}$ is a cover.
\end{example}

\begin{remark}

Every convergence system should be a cover, right? The problem is that this is not always true; for this, we need a centered preconvergence. Let $\langle X,L\rangle$  be a preconvergence space such that $L$ is centered and $\mathcal{C}$ be a convergence  system. For a point $x\in X$, since $L$ is centered, it happens that $\mathfrak{u}_x\to x$. Since $\mathcal{C}$ is a convergence system, it follows that there is $C\in\mathcal{C}$ such that $C\in\mathfrak{u}_x$, that is, $x\in C$. This proves that $\mathcal{C}$ is a cover.
\end{remark}
\begin{proposition}
Let $f:X\to Y$ be a continuous function and $\mathcal{C}$ be a convergence system for $Y$. The family $f^{-1}(\mathcal{C})=\{f^{-1}[C]:C\in\mathcal{C}\}$ is a convergence system for $X$.
\end{proposition}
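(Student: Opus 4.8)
The plan is to verify the defining property directly. Fix an arbitrary $x \in X$ and an arbitrary net $\varphi \in \textsc{Nets}(X)$ with $\varphi \to_L x$; the goal is to produce a member of $f^{-1}(\mathcal{C})$ that belongs to $\varphi^{\uparrow}$. The strategy is to transport the situation to $Y$, where the hypothesis on $\mathcal{C}$ is available, and then pull the resulting witness back along $f$.

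First I would invoke continuity of $f$: since $\varphi \to_L x$, the definition of continuity gives $f \circ \varphi \to_{L'} f(x)$. Now $f \circ \varphi$ is a net in $Y$ converging to $f(x)$, and since $\mathcal{C}$ is a convergence system for $Y$ it is in particular a local convergence system at $f(x)$. Applying this to the net $f \circ \varphi$ yields some $C \in \mathcal{C}$ with $C \in (f \circ \varphi)^{\uparrow}$.

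The remaining step is to translate $C \in (f \circ \varphi)^{\uparrow}$ into $f^{-1}[C] \in \varphi^{\uparrow}$, which is the only place any computation occurs. By definition of the induced filter there is $d' \in \hbox{dom}(\varphi)$ with $\{f(\varphi_d) : d \geq d'\} \subseteq C$; but then $d \geq d'$ forces $f(\varphi_d) \in C$, i.e.\ $\varphi_d \in f^{-1}[C]$, so the tail $\{\varphi_d : d \geq d'\}$ lies in $f^{-1}[C]$ and hence $f^{-1}[C] \in \varphi^{\uparrow}$. Since $f^{-1}[C] \in f^{-1}(\mathcal{C})$ and $x$ was arbitrary, this exhibits $f^{-1}(\mathcal{C})$ as a local convergence system at every point, that is, a convergence system for $X$.

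I do not expect any genuine obstacle here: the argument is just the composition of two definitions (continuity, and ``local convergence system'') glued by the elementary equivalence $\{\varphi_d : d \geq d'\} \subseteq f^{-1}[C] \iff \{f(\varphi_d) : d \geq d'\} \subseteq C$, which is the naturality of the tail-set construction under preimages. No property of $L$ or $L'$ beyond what is packaged into continuity is needed, so the proof should be short and self-contained.
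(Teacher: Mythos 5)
Your proof is correct and follows the same route as the paper: push the net forward by continuity, extract a witness $C$ from the convergence system at $f(x)$, and pull it back to get $f^{-1}[C]\in\varphi^{\uparrow}$. The only cosmetic difference is that you verify the last step with explicit tail sets, whereas the paper phrases it as the filter inclusion $f^{-1}\bigl((f\circ\varphi)^{\uparrow}\bigr)\subseteq\varphi^{\uparrow}$.
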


\begin{proof}
	Given a net $\varphi\in\textsc{Nets}(X)$ such that $\varphi\to x$, since $f$ is continuous, it happens that $f\circ \varphi\to f(x)$. Since $\mathcal{C}$ is a convergence system, there is $C\in\mathcal{C}$ such that $C\in(f\circ \varphi)^{\uparrow}$. Note that $f^{-1}[C]\in f^{-1}(\mathcal{C})\cap f^{-1}((f\circ \varphi)^{\uparrow})$ and $f^{-1}((f\circ \varphi)^{\uparrow})\subseteq \varphi^{\uparrow}$. It follows that $f^{-1}[C]\in\varphi^{\uparrow}$. This proves that $f^{-1}(\mathcal{C})$ is a convergence system.
\end{proof}

\begin{theorem}
	A convergence space $\langle X,L\rangle$ is compact if and only if every convergence system has a finite subcover.
\end{theorem}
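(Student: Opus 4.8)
The plan is to treat this as the convergence-space analogue of the classical ``compact iff every open cover admits a finite subcover'', with ultrafilters playing the role they play in the topological proof. Before either implication I would record the bridge fact that, for a convergence space, the stated definition of compactness is equivalent to the condition that every ultrafilter $L$-converges. One direction is immediate from Lemma \ref{ultrafilter}: a proper filter has the finite intersection property, hence sits inside some ultrafilter, which by hypothesis converges; conversely, if $\mathfrak{u}$ is an ultrafilter contained in a convergent proper filter $\mathcal{G}$, maximality forces $\mathfrak{u}=\mathcal{G}$, so $\mathfrak{u}$ itself converges. Throughout I would pass freely between an ultrafilter $\mathfrak{u}$ and a net $\varphi$ with $\varphi^{\uparrow}=\mathfrak{u}$, which exists by Proposition \ref{prop124}.

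For the forward implication, assume $X$ is compact and let $\mathcal{C}$ be a convergence system; I would argue by contradiction, supposing no finite subfamily of $\mathcal{C}$ covers $X$. Then the complements $\{X\setminus C:C\in\mathcal{C}\}$ have the finite intersection property, since $\bigcap_{i}(X\setminus C_i)=X\setminus\bigcup_i C_i\neq\emptyset$ for every finite selection. By Lemma \ref{ultrafilter} this family extends to an ultrafilter $\mathfrak{u}$, which converges to some $x$ by the bridge fact. Choosing $\varphi$ with $\varphi^{\uparrow}=\mathfrak{u}$ and $\varphi\to_L x$, the convergence-system property at $x$ yields some $C\in\mathcal{C}$ with $C\in\varphi^{\uparrow}=\mathfrak{u}$; but $X\setminus C\in\mathfrak{u}$ as well, so $\emptyset=C\cap(X\setminus C)\in\mathfrak{u}$, contradicting properness. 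Hence a finite subcover must exist.

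For the converse I would prove the contrapositive: if $X$ is not compact, I must produce a convergence system with no finite subcover. Non-compactness gives, via the bridge fact, an ultrafilter $\mathfrak{u}$ that $L$-converges to no point. I propose the candidate $\mathcal{C}=\{X\setminus U:U\in\mathfrak{u}\}$. It has no finite subcover because any finite union $\bigcup_i(X\setminus U_i)$ has complement $\bigcap_i U_i\in\mathfrak{u}$, hence nonempty. The substantive point---and the main obstacle---is checking that $\mathcal{C}$ really is a convergence system. Given $x$ and a net $\varphi\to_L x$, I would split on whether $\varphi^{\uparrow}$ meshes with $\mathfrak{u}$: if it does not, there are $U\in\mathfrak{u}$ and a tail of $\varphi$ that are disjoint, so $X\setminus U\in\varphi^{\uparrow}$ witnesses the system property; if it does mesh, then the filter generated by $\varphi^{\uparrow}\cup\mathfrak{u}$ is proper and contains $\mathfrak{u}$, so by maximality $\varphi^{\uparrow}\subseteq\mathfrak{u}$, making a net inducing $\mathfrak{u}$ a subnet of $\varphi$; isotonicity of $L$ then forces $\mathfrak{u}\to_L x$, contradicting the choice of $\mathfrak{u}$. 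Thus the meshing case is vacuous and $\mathcal{C}$ is a convergence system, completing the contradiction.

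I expect the delicate step to be this last verification: the whole weight of the argument rests on the dichotomy between meshing and inclusion for an ultrafilter, together with the isotonicity that a convergence space supplies, to eliminate the case that would otherwise break the construction.
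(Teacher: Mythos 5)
Your proof is correct and follows essentially the same route as the paper's: extending the complements of a finite-subcover-less convergence system to an ultrafilter via the finite intersection property in one direction, and taking the complements of a non-convergent ultrafilter as the witnessing convergence system in the other. The only cosmetic difference is in the final verification, where you argue via the mesh/containment dichotomy for ultrafilters while the paper goes directly from $F\in\mathcal{F}\setminus\mathfrak{u}$ to $X\setminus F\in\mathfrak{u}$; both rest on the same maximality property and isotonicity.
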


\begin{proof}
	The implications will be proved by contraposition. Let $\mathcal{C}$ be a convergence system such that has no finite subcover. Notice that the family $$\hat{\mathcal{C}}=\{X\setminus C:C\in\mathcal{C}\}$$ has the finite intersection property. Indeed, if there is $\mathcal{S}\subseteq\hat{\mathcal{C}}$ finite such that $\bigcap \mathcal{S}=\emptyset$, we have $X=X\setminus \emptyset=X\setminus \bigcap \mathcal{S}=\bigcup_{S\in\mathcal{S}} X\setminus S$ and $X\setminus S\in\mathcal{C}$, contradicting $\mathcal{C}$ has no finite subcover. By Lemma \ref{ultrafilter}, there is an ultrafilter $\mathfrak{u}$ containing $\hat{\mathcal{C}}$. Notice that $\mathfrak{u}$ cannot converge. Otherwise, since $\mathcal{C}$ is a convergence system, there is $C\in\mathcal{C}\cap\mathfrak{u}$, contradicting  $X\setminus C\in\mathfrak{u}$. This proves that $X$ is not compact. Conversely, if $X$ is not compact there is an ultrafilter $\mathfrak{u}\in\textsc{Fil}^*(X)$ that does not converges. Let us show that $$\hat{\mathfrak{u}}=\{X\setminus U:U\in\mathfrak{u}\}$$ is a convergence system such that has no finite subcover.  If $\mathcal{F}\in\textsc{Fil}^*(X)$ is a filter such that $\mathcal{F}\to_L x$, then $\mathcal{F}\not\subseteq\mathfrak{u}$. Otherwise, since $L$ is isotone, $\mathfrak{u}\to_L x$. It follows that there is $F\in\mathcal{F}\setminus \mathfrak{u}$. Since $X\setminus F\in\mathfrak{u}$, it happens that $F=X\setminus(X\setminus F)\in\hat{\mathfrak{u}}$. Hence $F\in\mathcal{F}\cap\hat{\mathfrak{u}}$. This proves that $\hat{\mathfrak{u}}$ is a convergence system. Moreover, $\hat{\mathfrak{u}}$ has no finite subcover. Otherwise, there is $U_1,\cdots, U_n\in\mathfrak{u}$ such that $X=\bigcup_{1\leq i\leq n} (X\setminus U_i)=X\setminus (\bigcap_{1\leq i\leq n} U_i)$, then $\bigcap_{1\leq i\leq n} U_i=\emptyset \in\mathfrak{u}$, a contradiction.
\end{proof}

\begin{corollary}
	A topological space $\langle X, \tau\rangle $ is compact if and only if the limit space $\langle X,\lim_\tau\rangle$ is compact.

\end{corollary}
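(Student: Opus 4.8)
The plan is to bypass the open-cover formulation entirely and instead pivot on the two filter-theoretic characterizations already available: on the topological side, Proposition~\ref{compact} tells us that $\langle X,\tau\rangle$ is compact if and only if every ultrafilter on $X$ converges, while on the convergence side the definition of a compact preconvergence space says that $\langle X,\lim_\tau\rangle$ is compact if and only if every proper filter is contained in a convergent proper filter. The essential bridge is the observation that a filter $\mathcal{F}\in\textsc{Fil}^*(X)$ converges to $x$ in $\langle X,\lim_\tau\rangle$ precisely when it converges to $x$ in the topological sense, i.e.\ when $\mathcal{N}_x\subseteq\mathcal{F}$; this is immediate because $\lim_\tau$ is by construction the net-convergence of the topology, and filter convergence in a preconvergence is determined by the induced filter. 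Once this identification is in place, both implications become short.

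For the forward implication, I would suppose $\langle X,\tau\rangle$ is compact and let $\mathcal{F}$ be any proper filter on $X$. Since $\mathcal{F}$ is proper it has the finite intersection property, so by Lemma~\ref{ultrafilter} it is contained in an ultrafilter $\mathfrak{u}$. Compactness together with Proposition~\ref{compact} gives that $\mathfrak{u}$ converges, say $\mathfrak{u}\to x$. Then $\mathfrak{u}$ is a convergent proper filter containing $\mathcal{F}$, which is exactly what is required for $\langle X,\lim_\tau\rangle$ to be compact.

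For the reverse implication, I would suppose $\langle X,\lim_\tau\rangle$ is compact and let $\mathfrak{u}$ be an arbitrary ultrafilter. Being a proper filter, it is contained in some convergent proper filter $\mathcal{F}$; but $\mathfrak{u}$ is maximal, so $\mathcal{F}=\mathfrak{u}$, and hence $\mathfrak{u}$ itself converges. As every ultrafilter then converges, Proposition~\ref{compact} yields that $\langle X,\tau\rangle$ is compact. I do not expect a genuine obstacle here: the only point requiring care is the verification that convergence in $\lim_\tau$ coincides with topological filter convergence, and the remainder is simply an application of the ultrafilter extension lemma and the maximality of ultrafilters. An alternative, more laborious route would derive the corollary from the preceding theorem by showing that the open covers of $\langle X,\tau\rangle$ and the convergence systems of $\langle X,\lim_\tau\rangle$ admit finite subcovers under the same hypotheses, but the filter argument above is cleaner and avoids having to match the two covering notions.
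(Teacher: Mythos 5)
Your argument is correct. The paper states this corollary without any proof, placing it immediately after the theorem characterizing compactness of a convergence space by finite subcovers of convergence systems, which suggests the author intends it as a consequence of that theorem (every open cover of $\langle X,\tau\rangle$ is a convergence system for $\lim_\tau$, and conversely a non-convergent ultrafilter yields a convergence system of complements with no finite subcover). You instead go directly through the filter-theoretic characterizations: Proposition~\ref{compact} on the topological side, the ``every proper filter is contained in a convergent proper filter'' definition on the convergence side, with Lemma~\ref{ultrafilter} and maximality of ultrafilters doing the work. Both routes are sound; yours has the advantage of not needing to reconcile open covers with convergence systems (which is the delicate point in the other derivation, since a convergence system need not consist of open sets), and the one point you correctly flag as needing care --- that $\mathcal{F}\to_{\lim_\tau}x$ means exactly $\mathcal{N}_x\subseteq\mathcal{F}$ --- is immediate from the definition of $\lim_\tau$ and the net--filter correspondence of Proposition~\ref{prop124}. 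No gaps.
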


\begin{proposition}
	If $X$ is compact and $f: X\to Y$ is a continuous onto function, then $Y$ is compact.
\end{proposition}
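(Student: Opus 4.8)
The plan is to use the net-theoretic characterization of compactness established above: a space is compact exactly when every net admits a convergent subnet. So I would begin with an arbitrary net $\psi\in\textsc{Nets}(Y)$ and aim to produce a convergent subnet of it. The first move is to lift $\psi$ back to $X$. Since $f$ is onto, for each $d\in\dom{\psi}$ the fibre $f^{-1}[\{\psi_d\}]$ is nonempty, so the axiom of choice furnishes a net $\varphi\in\textsc{Nets}(X)$ with $\dom{\varphi}=\dom{\psi}$ and $f\circ\varphi=\psi$. Because $X$ is compact, $\varphi$ has a convergent subnet $\varphi'$, say $\varphi'\to_L x$ for some $x\in X$, and continuity of $f$ immediately gives $f\circ\varphi'\to_{L'} f(x)$ in $Y$.

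The remaining, and genuinely delicate, step is to verify that $f\circ\varphi'$ is a subnet of $\psi=f\circ\varphi$ in the sense of the Aarnes--Andenaes definition used here, i.e. that $(f\circ\varphi)^{\uparrow}\subseteq(f\circ\varphi')^{\uparrow}$. I would reduce this to two facts about filter images. First, $(f\circ\eta)^{\uparrow}=f(\eta^{\uparrow})$ for any net $\eta$, since the tails of $f\circ\eta$ are precisely the $f$-images of the tails of $\eta$ and hence generate the same filter. Second, the image operation $\mathcal{F}\mapsto f(\mathcal{F})$ is monotone: if $\mathcal{F}\subseteq\mathcal{G}$, then every generator $f[A]$ of $f(\mathcal{F})$ is also a generator of $f(\mathcal{G})$, whence $f(\mathcal{F})\subseteq f(\mathcal{G})$. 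Applying both to the subnet relation $\varphi^{\uparrow}\subseteq\varphi'^{\uparrow}$ yields $(f\circ\varphi)^{\uparrow}=f(\varphi^{\uparrow})\subseteq f(\varphi'^{\uparrow})=(f\circ\varphi')^{\uparrow}$, which is exactly what is required. This is the main obstacle, and its subtlety lies entirely in the nonstandard definition of subnet via induced filters rather than cofinal monotone maps; once phrased in filter language it collapses to the monotonicity of images.

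Assembling the pieces, $f\circ\varphi'$ is a subnet of the arbitrary net $\psi$ and converges to $f(x)$, so every net in $Y$ has a convergent subnet and $Y$ is compact. I would note that a fully parallel argument is available in filter language: given a proper filter $\mathcal{G}$ on $Y$, surjectivity makes the preimage filter $f^{-1}(\mathcal{G})$ proper on $X$; compactness gives a convergent proper filter $\mathcal{H}\supseteq f^{-1}(\mathcal{G})$ with $\mathcal{H}\to_L x$; continuity gives $f(\mathcal{H})\to_{L'} f(x)$; and $\mathcal{G}\subseteq f(f^{-1}(\mathcal{G}))\subseteq f(\mathcal{H})$ exhibits $\mathcal{G}$ inside a convergent filter. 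Either route only uses continuity, surjectivity, and the compactness of $X$, so no further hypotheses are needed.
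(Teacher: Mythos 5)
Your proposal is correct and follows essentially the same route as the paper: lift the net to $X$ via surjectivity, extract a convergent subnet by compactness, push it forward by continuity, and check the result is a subnet of the original net in $Y$. The only difference is that you spell out the verification that $f\circ\varphi'$ is a subnet of $f\circ\varphi$ (via $(f\circ\eta)^{\uparrow}=f(\eta^{\uparrow})$ and monotonicity of filter images), a step the paper asserts without detail.
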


\begin{proof}
	Let $\varphi\in\textsc{Nets}(Y)$ be a net. Since $f$ is onto, for each $a\in\hbox{dom}(\varphi)$ there is $x_a\in X$ such that $f(x_a)=\varphi_a$. By construction, $\langle x_a\rangle_{a\in\hbox{dom}(\varphi)}$ is a net in $X$. Since $X$ is compact, there is a subnet $\psi\in\textsc{Nets}(X)$ of $\langle x_a\rangle_{a\in\hbox{dom}(\varphi)}$ converging to some $x\in X$. The continuity of $f$ tells us that $f\circ \psi\to f(x)$. Notice that $f\circ\psi$ is a subnet of the net $\varphi=f\circ\langle x_a\rangle_{a\in\hbox{dom}(\varphi)}$. This proves that $Y$ is compact.
\end{proof}

\newpage

\

\newpage

\chapter{An introduction to ``Algebraic Convergence": Homotopy theory in limit spaces}
\label{chap3}
The goal of this chapter is to present the elementary concepts of homotopy theory in limit spaces. In Section \ref{sec3.1} we present the Pasting Lemma for limit spaces and construct the fundamental groupoid of a limit space in Section \ref{sec3.3}, adapting what is done in~\cite{BrownBook} and~\cite{Kammeyer} for topological spaces.

\section{Restriction and gluing of continuous functions}

Recall that we can restrict continuous functions between topological spaces and still obtain a continuous function, just as we can extend continuous functions. Pasting continuous functions is a bit more delicate, but it can still be done under certain conditions. In this section, we show that these results (all of them?) are valid for preconvergence spaces. We denote the restriction of a function $f:X\to Y$ to a subset $A\subseteq X$ by $f|_A:A\to Y$ and by $f|^B:X\to B$ the function such that $f|^B(x)=f(x)$ for every $x\in X$ and  $f[X]\subseteq B$.

\label{sec3.1}
\begin{proposition}
	Let  $f:\langle X,L\rangle\to\langle Y,L'\rangle$  be a continuous function. For each $S\subseteq X$ the function $f|_S:\langle S,L|_S\rangle \to \langle Y\,L'\rangle$ and $f|^{B}:\langle X,L\rangle\to \langle B,L'|_B\rangle$ whenever $f[X]\subseteq B\subseteq Y$ are also continuous. 
\end{proposition}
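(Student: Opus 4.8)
The plan is to reduce both claims to results already in hand: inclusions into subspaces are continuous, compositions of continuous functions are continuous, and the universal property characterizing continuity into an initial (subspace) preconvergence.

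For the restriction, I would first observe that $f|_S = f \circ i_S$, where $i_S : \langle S, L|_S\rangle \to \langle X, L\rangle$ is the inclusion. By the very definition of the subpreconvergence $L|_S$ as the initial preconvergence making $i_S$ continuous, the map $i_S$ is continuous. Since $f$ is continuous by hypothesis and the composition of continuous functions is continuous, $f|_S = f \circ i_S$ is continuous. Unwound through nets, this is immediate: a net $\varphi\in\textsc{Nets}(S)$ with $\varphi \to_{L|_S} x$ is, by the subspace characterization, precisely a net with $\varphi \to_L x$ in $X$, so continuity of $f$ gives $(f|_S)\circ\varphi = f\circ\varphi \to_{L'} f(x)$, which is the required convergence.

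For the corestriction $f|^B$ with $f[X]\subseteq B\subseteq Y$, the key point is that $L'|_B$ is the initial preconvergence induced by the inclusion $i_B : B \to Y$. By the proposition characterizing continuity into an initial structure, taken with the index set a singleton, a map $g : \langle X, L\rangle \to \langle B, L'|_B\rangle$ is continuous if and only if $i_B \circ g$ is continuous. Applying this to $g = f|^B$ and noting that $i_B \circ f|^B = f$ as functions into $Y$, which is continuous by hypothesis, I conclude that $f|^B$ is continuous.

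The one point that deserves care is the corestriction, and it is conceptual rather than computational: continuity of a map into a subspace does not follow automatically from continuity of the composite into the ambient space for an arbitrary preconvergence on $B$, but it does here precisely because $L'|_B$ is the initial structure. I would therefore be explicit in invoking the universal property of the initial preconvergence; equivalently, one may argue directly via nets that $(f|^B)\circ\varphi \to_{L'|_B} f(x)$ holds if and only if $i_B\circ(f|^B)\circ\varphi = f\circ\varphi \to_{L'} f(x)$, which is guaranteed by continuity of $f$. Both routes are short, so no genuine obstacle arises.
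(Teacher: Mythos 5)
Your proof is correct. The paper argues directly with nets: for the restriction it takes $\varphi \to_{L|_S} x$, notes this means $\varphi \to_L x$, and applies continuity of $f$; for the corestriction it takes $\psi \to_L x$, gets $f\circ\psi \to_{L'} f(x)$, and observes that since the image lies in $B$ this is exactly $L'|_B$-convergence. Your route instead reduces both claims to previously established structural facts --- continuity of the subspace inclusion, closure of continuity under composition, and the characterization of continuity into an initial preconvergence (with singleton index set) --- which is a genuinely cleaner packaging: it makes visible \emph{why} the corestriction works (because $L'|_B$ is initial for $i_B$, not for an arbitrary preconvergence on $B$), a point the paper's direct computation passes over silently. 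The net-level unwindings you include are essentially verbatim the paper's proof, so nothing is lost; what your version buys is reusability of the lemmas and an explicit flag on the one conceptually delicate step.
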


\begin{proof}
	Let $\varphi\in\textsc{Nets}(S)$ be a net such that $\varphi\to_{L|_S} x\in S$. It follows that $\varphi\to_L x$ and, since $f$ is continuous, $f\circ\varphi=f|_S\circ\varphi\to_{L'}f(x)$. This proves that $f|_S$ is continuous. Moreover, given a net $\psi\in\textsc{Nets}(X)$ such that $\psi\to_L x$, it happens that $f\circ\psi\to_{L'}f(x)$. Since $f\circ\psi[\hbox{dom}(\psi)]\subseteq f[X]\subseteq B$, it follows that $f\circ\psi=f|^B\circ\psi\to_{L'|_B}f(x)=f|^B(x)$. This proves that $f|^B$ is continuous.
\end{proof}

\begin{proposition}
	\label{315}
	Let $\langle X,L\rangle$ and $\langle Y,L'\rangle$ be preconvergence spaces and $f:X \to Y$ be a function.
	
	\begin{enumerate}
		\item If $f$ is continuous at every point of $S\subseteq X$, then $f|_S:\langle S,L|_S\rangle \to \langle Y, L'\rangle $ is continuous.
		
		\item If $S$ is $L$-open and $f|_S$ is continuous, then $f$ is continuous at every point of $S$.
	\end{enumerate}
\end{proposition}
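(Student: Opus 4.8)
The plan is to reduce both claims to two facts already available: a net $\varphi\in\textsc{Nets}(S)$ satisfies $\varphi\to_{L|_S}x$ exactly when $\varphi\to_L x$ (the description of the subspace preconvergence), and $L(\varphi)=L(\psi)$ whenever $\varphi^{\uparrow}=\psi^{\uparrow}$ (the defining invariance of a preconvergence). For part (1), I would take any net $\varphi\in\textsc{Nets}(S)$ with $\varphi\to_{L|_S}x$ and $x\in S$; by the first fact $\varphi\to_L x$ when viewed in $X$, and since $f$ is continuous at the point $x\in S$ the hypothesis gives $f\circ\varphi\to_{L'}f(x)$. As $\varphi_d\in S$ we have $f\circ\varphi=f|_S\circ\varphi$, so $f|_S\circ\varphi\to_{L'}f|_S(x)$, which is continuity of $f|_S$. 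This half is pure bookkeeping.

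For part (2), fix $x\in S$ and $\varphi\in\textsc{Nets}(X)$ with $\varphi\to_L x$; the target is $f\circ\varphi\to_{L'}f(x)$. Here the openness of $S$ enters: since $x\in S\subseteq\hbox{inh}_L(S)$, the definition of inherence forces $S\in\varphi^{\uparrow}$, so some tail $\{\varphi_d:d\geq d'\}$ lies in $S$. I would then restrict $\varphi$ to the directed set $d'^{\uparrow}=\{d\in\hbox{dom}(\varphi):d\geq d'\}$ (a cofinal up-set of $\hbox{dom}(\varphi)$, hence directed), obtaining a net $\psi$ with $\psi_d=\varphi_d$ that genuinely takes values in $S$. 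The key technical observation is that trimming a net to a cofinal tail leaves the induced filter unchanged: every tail of $\psi$ is a tail of $\varphi$ and every tail of $\varphi$ contains a tail of $\psi$, so $\psi^{\uparrow}=\varphi^{\uparrow}$.

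From $\psi^{\uparrow}=\varphi^{\uparrow}$ the invariance of $L$ gives $L(\psi)=L(\varphi)$, hence $\psi\to_L x$, and since $\psi$ is a net in $S$ this reads $\psi\to_{L|_S}x$. Continuity of $f|_S$ yields $f\circ\psi=f|_S\circ\psi\to_{L'}f(x)$, and because $f\circ\psi$ is just $f\circ\varphi$ trimmed to $d'^{\uparrow}$ we again have $(f\circ\psi)^{\uparrow}=(f\circ\varphi)^{\uparrow}$, so $L'(f\circ\psi)=L'(f\circ\varphi)$ and therefore $f\circ\varphi\to_{L'}f(x)$. The step I would watch most carefully is precisely this transfer of convergence along the restriction: since $L$ is only assumed to be a preconvergence (not isotone), I cannot invoke a ``subnet of a convergent net converges'' principle as in Proposition \ref{subnet}. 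The workaround, and the conceptual core of the argument, is that the restricted net has literally the same induced filter, so convergence passes through the invariance $L(\varphi)=L(\psi)$ for $\varphi^{\uparrow}=\psi^{\uparrow}$ rather than through isotonicity; I would therefore state the filter-equality claim (and its image version) explicitly, as it is what makes the proof go through with no extra hypotheses on $L$.
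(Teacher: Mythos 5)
Your proof is correct and follows essentially the same route as the paper's: part (1) is the same bookkeeping, and for part (2) both arguments restrict $\varphi$ to the tail $a^{\uparrow}$ landing in $S$ and exploit the fact that the restricted net induces the same filter as $\varphi$. Your explicit insistence that convergence must transfer through the invariance $L(\varphi)=L(\psi)$ for $\varphi^{\uparrow}=\psi^{\uparrow}$ rather than through a subnet/isotonicity argument is a welcome clarification of a point the paper glosses over (its phrase ``is a subnet of $\varphi$, this means that $\varphi|_{a^{\uparrow}}\to_{L|_S}x$'' reads as if it invoked isotonicity, which is not among the hypotheses, though it does record the same filter equality $\varphi^{\uparrow}=\varphi|_{a^{\uparrow}}^{\uparrow}$ that you rely on).
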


\begin{proof}
	\
	\begin{enumerate}
		\label{symbol:greaterSet}
		
		\item If $f$ is continuous at $x\in S$ and $\varphi\in\textsc{Nets}(S)$ is a net such that $\varphi\to_{L|_S}x$, then $\varphi\to_{L}x$. It follows that $f\circ\varphi\to_{L'}f(x)$. Since $f\circ\varphi=f|_S\circ\varphi$ and $f(x)=f|_S(x)$ for every $x\in S$, this proves that $f|_S$ is continuous.
		\item Suppose that $S\subseteq X$ is $L$-open. Let $\varphi\in\textsc{Nets}(X)$ be a net such that $\varphi\to_L x\in S$. Since $S$ is $L$-open,  there is $a\in\hbox{dom}(\varphi)$ such that $\varphi[a^{\uparrow}]\subseteq S$, where $$a^{\uparrow}=\{d\in\hbox{dom}(\varphi):d\geq a\}$$ Notice that $a^{\uparrow}$ is a directed set, then $\varphi|_{a^{\uparrow}}\in\textsc{Nets}(S)$ is a subnet of $\varphi$. This means that $\varphi|_{a^{\uparrow}}\to_{L|_S}x$. It follows that $f|_S\circ\varphi|_{a^{\uparrow}}\to_{L'}f|_S(x)$. Since $f|_S\circ\varphi|_{a^{\uparrow}}=f\circ \varphi|_{a^{\uparrow}}$, $f|_S(x)=f(x)$ and $\varphi^{\uparrow}=\varphi|_{a^{\uparrow}}^{\uparrow}$, it happens that $f\circ\varphi\to_{L'}f(x)$. This proves that $f$ is continuous at every point of $S$.
	\end{enumerate}
	\end{proof}

\begin{definition}
	Let $X$ be a preconvergence space. A\textbf{ path} in $X$ is a continuous function $\gamma:[0,1]\to X$. If $\gamma(0)=x$ and $\gamma(1)=y$, we say that $\gamma$ is a path from $x$ to $y$. In the case that $x=y$ we say that $\gamma$ is a loop.
\end{definition}

Suppose that there is a path $\gamma$ from $x$ to $y$ and another path $\gamma'$ from $y$ to $z$ , there should be a path from $x $ to $z$, right? For topological spaces it is true. More precisely, this path would be given by the concatenation
\label{symbol:concatenation}
	$$	\begin{array}{rcl}
	\gamma *\gamma': [0,1] & \to     & X\\ 
	t & \mapsto & \begin{cases}
		\gamma(2t) & \hbox{ if } 0\leq t\leq \frac{1}{2} \\
		\gamma'(2t-1) & \hbox{ if } \frac{1}{2} \leq t\leq 1 \\
	\end{cases}  \\
\end{array}$$

\begin{figure}[H]	\centering
	\includegraphics[width=5cm]{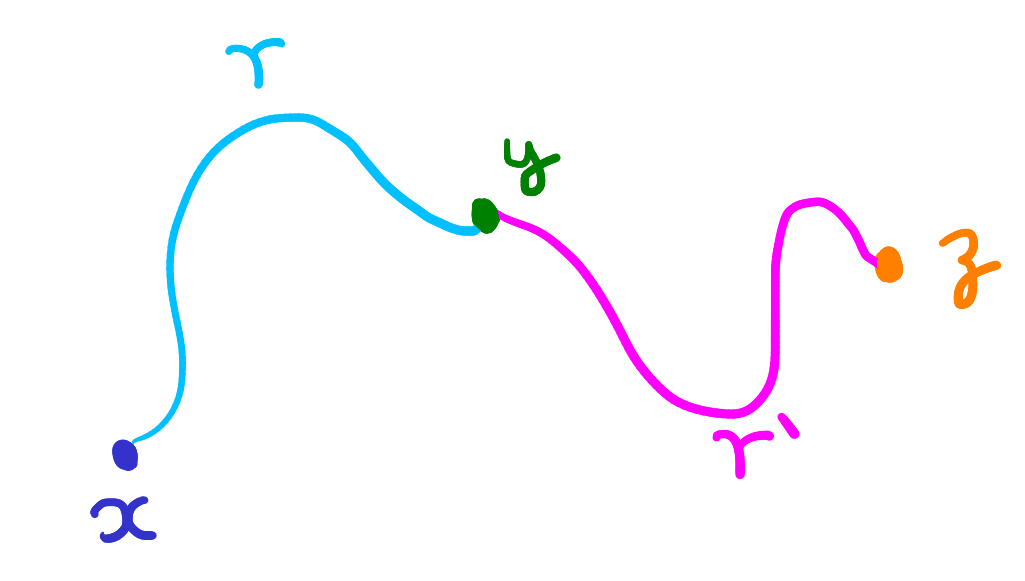}
	\caption{Intuitively, the idea of path gluing involves finding a way to traverse two paths within a certain time interval. For this to be done, one path must be completely traversed by the halfway point of the time interval.}
\end{figure}

The problem is that in the context of preconvergence spaces, this concatenation of paths is not always continuous. For topological spaces, what allows us to conclude continuity is the Pasting lemma, which states that the pasting of continuous functions defined on closed sets is continuous. Recall that in the Example \ref{215} we define a non-stable preconvergence on $[0,1]$. Consider the paths 

$$\begin{array}{rcl}
	\gamma : \langle [0,1],\to_{[0,1]}\rangle  & \to     & \langle [0,1], L\rangle  \\ 
	t & \mapsto & 	\frac{t}{2}\\
\end{array} \ \ \hbox{ and } \ \ \begin{array}{rcl}
	\gamma' : \langle [0,1],\to_{[0,1]}\rangle & \to     & \langle [0,1], L\rangle \\ 
	t & \mapsto & 	\frac{t}{2}+\frac{1}{2}\\
\end{array}$$

Notice that $\gamma*\gamma'=1_{[0,1]}:\langle [0,1],\to_{[0,1]}\rangle \to\langle [0,1],L\rangle $ is not continuous. So, in this case the Pasting lemma fails. Stability seems to be quite important for making the gluing of continuous functions work, and this is indeed true.

\begin{lemma}[Pasting lemma] Let $\langle X,L\rangle $ and $\langle Y,L'\rangle$ be limit spaces and $f\colon X\to Y$ be a function. If $A$ and $B$ are $L$-closed subsets of $X$ such that $ X=A\cup B$ and  the restrictions $f|_A:\langle A,L|_A\rangle\to\langle Y,L'\rangle$ and $f|_B:\langle B,L|_B\rangle\to \langle Y,L'\rangle$ are continuous, then $f$ is continuous.

\end{lemma}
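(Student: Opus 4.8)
The plan is to verify continuity of $f$ directly from the net characterization: I must show that $f\circ\varphi\to_{L'}f(x)$ for every net $\varphi\in\textsc{Nets}(X)$ with $\varphi\to_L x$. Fix such a $\varphi$ with domain $\mathbb{D}$. Since $X=A\cup B$ we have $x\in A$ or $x\in B$, and because the hypotheses are symmetric in $A$ and $B$ I may assume $x\in A$. The key device is to push $\varphi$ into a closed piece by overwriting its ``wrong'' terms with the constant value $x$: define $\psi_d=\varphi_d$ when $\varphi_d\in A$ and $\psi_d=x$ otherwise. Since $x\in A$, the net $\psi$ lives in $A$; moreover $\psi$ is a mixing of $\varphi$ and the constant net $\langle x\rangle_d$ in the sense of Definition~\ref{mixing}, and both converge to $x$ (the latter by centeredness), so stability of $L$ gives $\psi\to_L x$. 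As the subspace preconvergence does not alter the net, $\psi\to_{L|_A}x$, and continuity of $f|_A$ yields $f\circ\psi\to_{L'}f(x)$.

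The argument then splits according to whether $x$ also lies in $B$. If $x\in A\cap B$, I build the symmetric net $\psi'$ in $B$ by setting $\psi'_d=\varphi_d$ when $\varphi_d\in B$ and $\psi'_d=x$ otherwise, and the same reasoning gives $f\circ\psi'\to_{L'}f(x)$. The decisive observation is that $f\circ\varphi$ is then a mixing of $f\circ\psi$ and $f\circ\psi'$: termwise, if $\varphi_d\in A$ then $(f\circ\varphi)_d=(f\circ\psi)_d$, while if $\varphi_d\in B$ then $(f\circ\varphi)_d=(f\circ\psi')_d$, and at least one of the two always holds because $\varphi_d\in A\cup B$. Since $Y$ is a limit space, stability of $L'$ applied to this mixing gives $f\circ\varphi\to_{L'}f(x)$.

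It remains to treat $x\in A\setminus B$, and here I use that $B$ is $L$-closed. I claim the index set $\{d:\varphi_d\in B\setminus A\}$ is not cofinal in $\mathbb{D}$: otherwise the restriction of $\varphi$ to it would be a subnet lying in $B$ and, by isotonicity, still converging to $x$, forcing $x\in B$ by the net characterization of $L$-closed sets and contradicting $x\notin B$. Hence there is $d_0$ with $\varphi_d\in A$ for all $d\geq d_0$, so a tail of $\varphi$ lies entirely in $A$. This tail is a net in $A$ inducing the same filter as $\varphi$, hence still converges to $x$; applying continuity of $f|_A$ to it and using that a net and any of its tails induce the same filter (and therefore have the same $L'$-limits) shows $f\circ\varphi\to_{L'}f(x)$.

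The main obstacle is exactly the case where the terms of $\varphi$ oscillate cofinally between $A\setminus B$ and $B\setminus A$, so that no tail of $\varphi$ is confined to either closed set and the restrictions cannot be invoked one at a time; this is precisely the situation rescued by the mixing step, where stability of both $L$ and $L'$ is indispensable. This is also what explains the hypothesis that $X$ and $Y$ be limit spaces rather than arbitrary convergence spaces: the non-stable preconvergence of Example~\ref{215} already makes the concatenation $\gamma*\gamma'$ discontinuous, so without stability the pasting conclusion genuinely fails.
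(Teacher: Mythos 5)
Your proof is correct and follows essentially the same route as the paper: the decisive step in both is to replace the terms of $\varphi$ falling outside $A$ (resp.\ $B$) by the constant value $x$, recognize the resulting nets as mixings of $\varphi$ with a constant net so that stability of $L$ gives their convergence, and then observe that $f\circ\varphi$ is a mixing of their images so that stability of $L'$ finishes the $x\in A\cap B$ case. The only (harmless) divergence is in the case $x\in A\setminus B$, where the paper invokes $L$-openness of $X\setminus B$ directly to extract a tail of $\varphi$ inside $A$, while you recover the same tail by a cofinality argument from the net characterization of $L$-closed sets; both are valid and equivalent.
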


\begin{proof}
Let $\varphi\in\textsc{Nets}(X)$ be a net such that $\varphi\to_L x\in X$. If $x\in X\setminus B$, since $X\setminus B$ is $L$-open, there is $a\in\hbox{dom}(\varphi) $ such that $\varphi [a^{\uparrow}]\subseteq X\setminus B$. Then $\varphi|_{a^{\uparrow}}$ is a net in $A$ and $\varphi|_{a^{\uparrow}}\to_L x$ because it is a subnet of $\varphi$. It follows that $\varphi|_{a^{\uparrow}}\to_{L|_A} x$. Since $f|_A$ is continuous, it happens that $f|_A\circ \varphi|_{a^{\uparrow}}\to_{L'}f|_A(x)=f(x)$. Notice that $f\circ\varphi$ is a subnet of $f|_A\circ\varphi|_{a^{\uparrow}}$, then $f\circ \varphi\to_{L'} f(x)$. This proves that $f$ is continuous at $x$. The case in which $x\in X\setminus A$ is analogous.  On the other hand, if $x\in A\cap B$, consider the nets

$$\begin{array}{rcl}
	\psi: \hbox{dom}(\varphi)  & \to     &  X  \\ 
	d & \mapsto &  \begin{cases}
		\varphi_d  & \hbox{ if }\varphi_d\in A \\
		x  & \hbox{ if } \varphi_d\notin A
	\end{cases} \\
\end{array} \ \ \hbox{ and } \ \ \begin{array}{rcl}
	\gamma : \hbox{dom}(\varphi)& \to     & X \\
	d & \mapsto & \begin{cases}
		\varphi_d  & \hbox{ if }\varphi_d\in B \\
		x  & \hbox{ if } \varphi_d\notin B
	\end{cases} 
\end{array}$$

By construction $\psi$ is a net in $A$ and $\gamma$ is a net in $B$. Notice that $\psi$ and $\gamma$ are mixings of $\varphi$ and a constant net which converges to $x$, then $\psi,\gamma\to_L x$.  This means that $\psi\to_{L|_A} x$ and $\gamma\to_{L|_B} x$. Since $f|_A$ and $f|_B$ are continuous, it happens that $f|_A\circ\psi \to_{L'} f|_A(x)=f(x)$ and $f|_B\circ\gamma\to_{L'} f|_B(x)=f(x)$. Notice that $f\circ\varphi$ is a mixing of $f|_A\circ \psi$ and $f|_B\circ\gamma$. Since $Y$ is a limit space, it follows that $f\circ\varphi \to_{L'} f(x)$. This proves that $f$ is continuous.
\end{proof}

\begin{figure}[H]
	\centering
	\includegraphics[width=8cm]{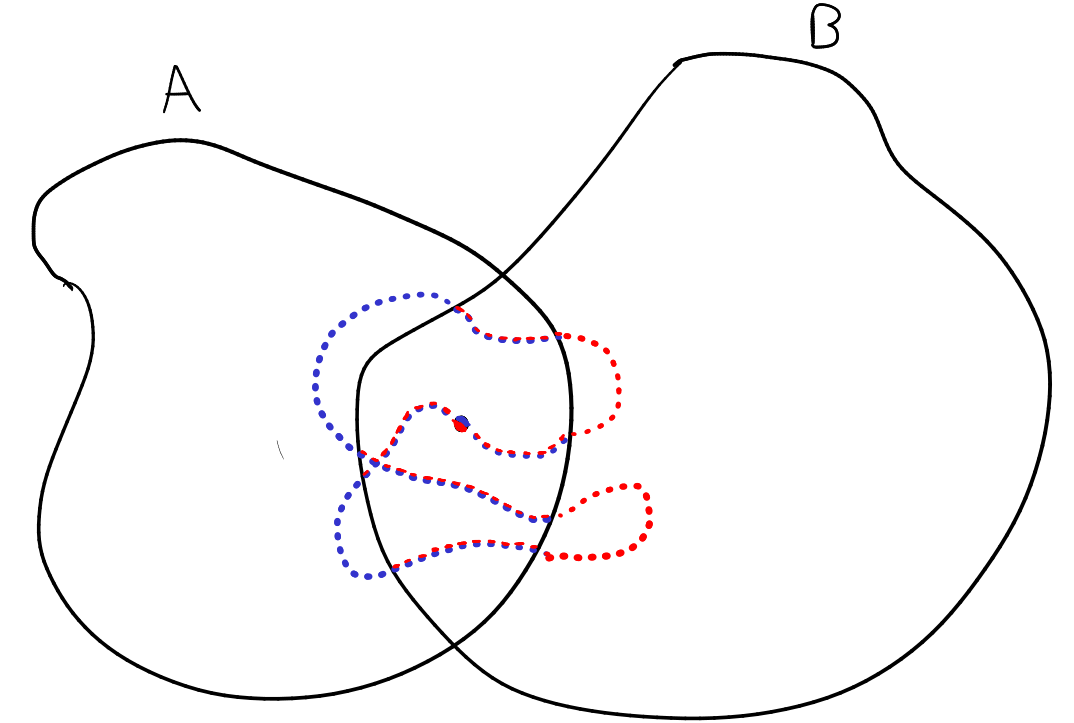}
\caption{The idea of the last proof was to collapse to the limit point all the point in $A \cap B$ in order to obtain two nets, one in $A$ (blue) and the other in $B$ (red).}
\end{figure}

\begin{remark}
In~\cite{absolutepaper}, we present a more general version of the Pasting Lemma, requiring that $X$ be covered by a locally finite family. The version presented by Preuss~\cite{ref5} assumes that the cover is finite and that $X$ is pretopological. On the other hand, the version by Dolecki and Mynard~\cite{schechter1996handbook} assumes that $Y$ is pseudotopological. There is yet another version, where Dossena~\cite{dossena} assumes that both $X$ and $Y$ are pseudotopological. 
\end{remark}

\begin{corollary}
	Let $X$ be a limit space. If $\gamma:[0,1]\to X$ is a path from $x$ to $y$ and $\gamma':[0,1]\to X$ is a path from $y$ to $z$, then $\gamma*\gamma':[0,1]\to X$ is path from $x$ to $z$.
\end{corollary}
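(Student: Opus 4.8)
The plan is to derive the continuity of $\gamma * \gamma'$ directly from the Pasting Lemma. First I would observe that the unit interval $[0,1]$, equipped with its usual convergence $\to_{[0,1]}$, is a limit space, since every topological space is a limit space; together with the hypothesis that $X$ is a limit space, this puts us in exactly the setting the Pasting Lemma requires, with the lemma's domain being $[0,1]$ and its codomain being $X$.

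Next I would take the cover $A = [0,\tfrac12]$ and $B = [\tfrac12,1]$. Both are closed in the usual topology of $[0,1]$, hence closed for the induced convergence $\to_{[0,1]}$, and $[0,1] = A \cup B$. Before applying the lemma one checks that $\gamma * \gamma'$ is well-defined on the overlap $A \cap B = \{\tfrac12\}$: indeed $\gamma(2 \cdot \tfrac12) = \gamma(1) = y$ and $\gamma'(2 \cdot \tfrac12 - 1) = \gamma'(0) = y$, so both branches return $y$ there. The endpoint evaluations $(\gamma * \gamma')(0) = \gamma(0) = x$ and $(\gamma * \gamma')(1) = \gamma'(1) = z$ then identify $\gamma * \gamma'$ as a path from $x$ to $z$, once its continuity is known.

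The substantive step is the continuity of the two restrictions. On $A$ the restriction of $\gamma * \gamma'$ is $t \mapsto \gamma(2t)$, i.e. the composite $\gamma \circ m_0$, where $m_0 \colon A \to [0,1]$ is the affine map $t \mapsto 2t$ (with $A$ carrying the subspace convergence). The map $m_0$ is continuous---being continuous between the underlying topological spaces, it is continuous between the induced limit spaces---and $\gamma$ is continuous by hypothesis, so their composite is continuous. Symmetrically, on $B$ the restriction is $\gamma' \circ m_1$ with $m_1 \colon B \to [0,1]$, $t \mapsto 2t - 1$, again continuous. With $A$ and $B$ both closed for $\to_{[0,1]}$, covering the limit space $[0,1]$, and the two restrictions continuous into the limit space $X$, the Pasting Lemma gives the continuity of $\gamma * \gamma'$.

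I do not anticipate any serious difficulty: the proof is essentially a verification of the Pasting Lemma's hypotheses, and the only point that genuinely uses the data is the well-definedness at $t = \tfrac12$, which is precisely the matching condition $\gamma(1) = y = \gamma'(0)$ built into the hypothesis that the two paths share the endpoint $y$.
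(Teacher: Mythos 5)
Your proposal is correct and follows exactly the paper's own route: decompose $[0,1]$ into the closed pieces $[0,\tfrac12]$ and $[\tfrac12,1]$, check the restrictions are continuous, and invoke the Pasting Lemma for limit spaces. The only difference is that you spell out the continuity of the restrictions via the affine reparametrizations and the matching condition at $t=\tfrac12$, details the paper leaves implicit.
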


\begin{proof}
	Note that $[0,\frac{1}{2}]$ and $[\frac{1}{2},1]$ are closed sets of $[0,1]$ and $[0,1]=[0,\frac{1}{2}]\cup[\frac{1}{2},1]$. Since $\gamma* \gamma'|_{[0,\frac{1}{2}]}$ and $\gamma* \gamma'|_{[\frac{1}{2},1]}$ are continuous and $X$ is a limit space, from the Pasting Lemma the gluing $\gamma*\gamma'$ is continuous.
\end{proof}
\section{Homotopy}

The traditional definition of homotopy in the context of topological spaces is the following: given continuous functions $f,g\colon X\to Y$ a homotopy between $f$ and $g$ is a continuous function $H: [0,1]\times X\to Y$, with $[0,1]$ carrying its standard topology, such that $H(0,x)=f(x)$ and $H(1,x)=g(x)$ for every $x\in X$. It is common to think of a homotopy as a path in the space of continuous functions, which is not entirely legitimate due to the lack of an exponential object in the category of topological spaces. However, we saw in Proposition \ref{exp} that the category of preconvergence spaces has an exponential object. Therefore, in this context, there is no theoretical gap in considering a homotopy as a path in the space of continuous functions. Thus, the definition of homotopy for preconvergence spaces is as follows.

\begin{definition}
	A \textbf{homotopy} between continuous functions $f,g:X\to Y$ is a continuous function $H:[0,1]\to\mathcal{C}(X,Y)$, with $[0,1]$ carrying its standard topological convergence, such that $H(0)=f$ and $H(1)=g$. If there is an homotopy between f and g we denote it by $f\simeq g$.
\end{definition}

\begin{remark}

Suppose that the definition of homotopy were the traditional one. Considering $Z=[0,1]$ in Proposition \ref{exp} we have a unique continuous function $\tilde{H}:[0,1]\to\mathcal{C}(X,Y)$ such that $\hbox{ev}\circ(\tilde{H}\times 1_X)=H$. Notice that $$H(t,x)=\hbox{ev}(\tilde{H}\times 1_X(t,x))=\hbox{ev}(\tilde{H}(t),x)=\tilde{H}(t)(x)$$ for every $(t,x)\in [0,1]\times X$. In particular, $\tilde{H}(0)(x)=f(x)$ and $\tilde{H}(1)(x)=g(x)$ for every $x\in X$. This tells us that $\tilde{H}(0)=f$ and $\tilde{H}(1)=g$. Then exponentiality allows us to work with both definitions.

\end{remark}

\begin{proposition}
	Homotopy is an equivalence relation.
\end{proposition}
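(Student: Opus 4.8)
The plan is to exploit the fact that a homotopy is exactly a path in the limit space $\mathcal{C}(X,Y)$, so that the three axioms of an equivalence relation reduce to three elementary constructions on paths. First I would record the structural input: since $X$ and $Y$ are limit spaces, $\langle \mathcal{C}(X,Y),\mathcal{C}\rangle$ is a limit space by the earlier proposition, and this is precisely what makes the path operations (reversal, concatenation) available with the guarantees we need.

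For reflexivity, I would take the constant map $H\colon [0,1]\to\mathcal{C}(X,Y)$ with $H(t)=f$ for every $t$. Because the continuous convergence is centered, every constant net converges to its image, so $H$ is continuous; since $H(0)=H(1)=f$, this yields $f\simeq f$.

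For symmetry, suppose $f\simeq g$ is witnessed by a path $H$. The reversal $r\colon [0,1]\to[0,1]$ given by $r(t)=1-t$ is continuous as a map of topological (hence limit) spaces, so the composite $\bar H=H\circ r$ is continuous, composition of continuous functions being continuous. As $\bar H(0)=H(1)=g$ and $\bar H(1)=H(0)=f$, we obtain $g\simeq f$.

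For transitivity, suppose $f\simeq g$ via $H$ and $g\simeq h$ via $H'$; then $H$ is a path from $f$ to $g$ and $H'$ a path from $g$ to $h$ in $\mathcal{C}(X,Y)$. Since $\mathcal{C}(X,Y)$ is a limit space, the corollary to the Pasting Lemma ensures that the concatenation $H*H'$ is again a path, and by construction it runs from $f$ to $h$, so $f\simeq h$. I expect this last step to be the only substantive one: reflexivity and symmetry go through in any centered convergence space, but transitivity genuinely requires that the gluing of paths stay continuous, which is exactly where the limit-space structure on $\mathcal{C}(X,Y)$ — and behind it the Pasting Lemma, whose proof uses stability — becomes indispensable.
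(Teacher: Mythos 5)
Your proposal is correct and follows essentially the same route as the paper: a constant path for reflexivity (using centeredness of the continuous convergence), precomposition with $t\mapsto 1-t$ for symmetry, and concatenation of paths in $\mathcal{C}(X,Y)$ via the Pasting Lemma for transitivity. Your closing observation that only transitivity genuinely uses the limit-space (stability) hypothesis is a nice gloss, but the argument itself matches the paper's.
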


\begin{proof}
	\
	\begin{enumerate}[i)]
		
	\item (Reflexivity) Let $f:X\to Y$ be a continuous function, since $\mathcal{C}(X,Y)$ is centered, the constant function $H:[0,1]\to\mathcal{C}(X,Y)$ such that $H(t)=f$ for all $t\in[0,1]$ is continuous. This proves that $f\simeq f$.
	
\item (Symmetry)	If $f\simeq  g$, there is a homotopy $H:[0,1]\to\mathcal{C}(X,Y)$ between $f$ and $g$ . The function $h:[0,1]\to [0,1]$ such that $h(t)=1-t$ is continuous. It follows that the composition $H\circ h: [0,1]\to\mathcal{C}(X,Y)$ is a continuous function such that $H\circ h(0)=g$ and $H\circ h(1)=f$. Then $H\circ h$ is a homotopy between $g$ and $f$. This proves that $g\cong f$.
	
\item (Transitivity)	If $f\simeq g$ and $g\simeq  h$, there are homotopies $H,G:[0,1]\to\mathcal{C}(X,Y)$ such that $H(0)=f$, $H(1)=G(0)=g$ and $G(1)=h$. By the Pasting lemma, $H*G$ is a homotopy betweeen $f$ and $h$. This proves that $f\cong h$.

\end{enumerate}

\end{proof}

\begin{proposition}
	Let $f,g:X\to Y$ be homotopic functions and $h:Y\to Z$ be a continuous function. The compositions $h\circ f$ and $ h\circ g$ are homotopic. In others words, the composition by continuous function preserves homotopy.
	\label{comp}
\end{proposition}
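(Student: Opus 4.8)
The plan is to realize $h\circ f$ and $h\circ g$ as the images of $f$ and $g$ under the post-composition map and then transport the given homotopy along that map. Concretely, I would define
\[
h_*:\mathcal{C}(X,Y)\to\mathcal{C}(X,Z),\qquad k\mapsto h\circ k,
\]
which is well defined since the composition of continuous functions is continuous. If $H:[0,1]\to\mathcal{C}(X,Y)$ is a homotopy with $H(0)=f$ and $H(1)=g$, I would set $G=h_*\circ H$. Granting that $h_*$ is continuous, $G$ is continuous as a composition of continuous functions, and $G(0)=h_*(f)=h\circ f$ while $G(1)=h_*(g)=h\circ g$, so $G$ witnesses $h\circ f\simeq h\circ g$.

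The real content is therefore to verify that $h_*$ is continuous with respect to the continuous convergences on $\mathcal{C}(X,Y)$ and $\mathcal{C}(X,Z)$. I would argue directly with nets, using the net characterization of continuous convergence established earlier. Let $\langle f_a\rangle_{a\in\mathbb{A}}$ be a net in $\mathcal{C}(X,Y)$ with $f_a\to_{\mathcal{C}}f$; I must show $h\circ f_a\to_{\mathcal{C}}h\circ f$. By that characterization it suffices to take an arbitrary net $\langle x_b\rangle_{b\in\mathbb{B}}$ in $X$ with $x_b\to x$ and to prove that $\langle h(f_a(x_b))\rangle_{\langle a,b\rangle\in\mathbb{A}\times\mathbb{B}}\to h(f(x))$.

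Since $f_a\to_{\mathcal{C}}f$, the same characterization applied to $\langle x_b\rangle_b$ gives $\langle f_a(x_b)\rangle_{\langle a,b\rangle}\to f(x)$ in $Y$. Continuity of $h$ as a map between limit spaces then yields $h\circ\langle f_a(x_b)\rangle_{\langle a,b\rangle}=\langle h(f_a(x_b))\rangle_{\langle a,b\rangle}\to h(f(x))$ in $Z$, which is exactly what was required. Hence $h_*$ is continuous and the argument closes.

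I expect no serious obstacle here: once the post-composition map is isolated, the proof is routine, and the only point demanding care is keeping the order of composition straight. As an alternative to the direct net computation, one could instead deduce the continuity of $h_*$ from the already-established continuity of the composition map $\circ:\mathcal{C}(X,Y)\times\mathcal{C}(Y,Z)\to\mathcal{C}(X,Z)$ by precomposing it with the map $k\mapsto\langle k,h\rangle$, which is continuous because its first coordinate is the identity and its second is a constant (and $\mathcal{C}(Y,Z)$ is centered).
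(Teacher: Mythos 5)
Your proof is correct, and its overall shape matches the paper's: both push the given homotopy $H$ forward by post-composition with $h$, so that the new homotopy is $t\mapsto h\circ H(t)$, and the only real work is continuity of that map. Where you differ is in how that continuity is established. The paper factors $t\mapsto h\circ H(t)$ through the binary composition map $\circ:\mathcal{C}(X,Y)\times\mathcal{C}(Y,Z)\to\mathcal{C}(X,Z)$, whose continuity was proved in the preceding proposition (via a subnet argument), paired with the map sending $t$ to $\langle H(t),h\rangle$ — essentially the ``alternative'' you sketch at the end, including the observation that the constant second coordinate converges because $\mathcal{C}(Y,Z)$ is centered. Your primary route instead isolates the unary post-composition operator $h_*$ and verifies its continuity directly from the net characterization of continuous convergence: $f_a\to_{\mathcal{C}}f$ and $x_b\to x$ give $f_a(x_b)\to f(x)$ in $Y$, and continuity of $h$ gives $h(f_a(x_b))\to h(f(x))$, which is exactly $h\circ f_a\to_{\mathcal{C}}h\circ f$. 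This is a touch more elementary — it bypasses the continuity of $\circ$ and the subnet bookkeeping that proof required, and it uses no hypothesis beyond the continuity of $h$ — at the cost of not reusing an already-established lemma. Both arguments are sound; the verification that $G(0)=h\circ f$ and $G(1)=h\circ g$ is immediate in either case.
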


\begin{proof}
	Let $H:[0,1]\to\mathcal{C}(X,Y)$ be a homotopy between $f$ and $g$. Let us show that
	
		$$	\begin{array}{rcl}
		\Psi:[0,1] & \to     &\mathcal{C}(X,Z)\\ 
		 t & \mapsto & h\circ H(t) \\
	\end{array}$$ is a homotopy between $h\circ f$ and $h\circ g$. Notice that $\Psi$ is a composition of continuous functions. Indeed, the following diagram commutes
	
\[\begin{tikzcd}[ampersand replacement=\&,cramped]
	[0,1] \& {\mathcal{C}(X,Y)\times\mathcal{C}(Y,Z)} \\
	\& {\mathcal{C}(X,Z)}
	\arrow["{H\times \backvec{h}}", from=1-1, to=1-2]
	\arrow["\Psi"', from=1-1, to=2-2]
	\arrow["\circ", from=1-2, to=2-2]
\end{tikzcd}\] Where $\backvec{h}:\mathcal{C}(X,Y)\to\mathcal{C}(X,Z)$ is the function such that $\backvec{h}(f)=h\circ f$ for all continuous function $f\in\mathcal{C}(X,Y)$. Since $\Psi(0)=h\circ H(0)=h\circ f$ and $\Psi(1)=h\circ H(1)=h\circ g$, it follows that $\Psi$ is a homotopy between $h\circ f$ and $h\circ g$.
\end{proof}

\begin{example}
 The homotopy category $\textsc{HoLim}$ is the category in which the objects are limit spaces and an arrow is the equivalence class $[f] $ of all continuous functions which are homotopic to $f:X\to Y$. The composition is given by
	
	$$\begin{array}{rcl}
		\circ:\hbox{Hom}(X,Y)\times \hbox{Hom}(Y,Z) & \to     & \hbox{Hom}(X,Z)\\ 
		\langle [f],[g]\rangle &\mapsto & [g\circ f]\\
	\end{array}$$ where $\hbox{Hom}(X,Y)$ denotes the set of arrows from $X$ to $Y$.	Notice that this composition is well-defined. If $f\simeq f'$ and $g\simeq g'$, there are homotopies $G:[0,1]\to\mathcal{C}(X,Y)$ and $H:[0,1]\to\mathcal{C}(Y,Z)$ between $f$ and $f'$ and $g$ and $g'$, respectively. The function

	\[\begin{split} K:[0,1]&\to \mathcal{C}(X,Z)\\
		t&\mapsto H(t)\circ G(t)\end{split}\] is a composition  of continuous functions
		
		\[\begin{tikzcd}[ampersand replacement=\&,cramped]
			{[0,1]} \& {\mathcal{C}(X,Y)\times \mathcal{C}(Y,Z)} \\
			\& {\mathcal{C}(X,Z)}
			\arrow["{G\times H}", from=1-1, to=1-2]
			\arrow["K"', from=1-1, to=2-2]
			\arrow["\circ", from=1-2, to=2-2]
		\end{tikzcd}\] Then $K$ is continuous. Moreover, it happens that $K(0)=H(0)\circ G(0)=g\circ f$ and $K(1)=H(1)\circ G(1)=g'\circ f'$. This means that $K$ is a homotopy between $g\circ f$ and $g'\circ f'$. The reader can easily verify that this composition is associative and that the identity functions are the identities in this category. For topological spaces, this category is important because it allows the study of topological spaces in a more abstract and flexible way. Intuitively, it identifies spaces that have the same ``shape" or homotopy structure, even if they are not exactly the same. For more details see~\cite{hatcher}.
		 
\end{example}

\section{The Fundamental Groupoid}

\label{sec3.3}

Our work in Algebraic Topology, or in this context ``Algebraic Convergence", begins now. The Fundamental Group of a topological space classifies all paths that start and end at the same point, considering equivalent those loops that are homotopics. In many situations, a space can be decomposed into smaller subspaces. The goal is to find methods to compute the fundamental group of the space based on the fundamental groups of its subspaces. However, since the fundamental group depends on the path component of the base point, any decomposition theorem must consider path connectedness. To simplify this, we generalize the concept of fundamental group to the notion of fundamental groupoid. This will allow us to prove the Seifert van-Kampen theorem in the groupoid version more clearly in the Chapter \ref{chap4}. The following construction is an adaptation for the context of limit spaces of what is done in~\cite{BrownBook} and~\cite{Kammeyer} for topological spaces.

\begin{definition}
	Let $\gamma,\gamma': [0,1]\to X$  be paths in a preconvergence space $X$ with the same end points, say $x$ and $y$. We say that $\gamma$ and $\gamma'$ are \textbf{homotopic relative} to its end points, abbreviated as rel-homotopic, if there is a homotopy $H\colon [0,1]\to \mathcal{C}([0,1],X)$ from $\gamma$ to $\gamma'$ such that $H(t)$ is a path from $x$ to $y$ for every $t\in [0,1]$. We write $\gamma\simeq_{x,y} \gamma'$ to indicate that $\gamma$ and $\gamma'$ are rel-homotopic.
\end{definition}

\begin{figure}[H]
	\centering
	\includegraphics[width=10cm]{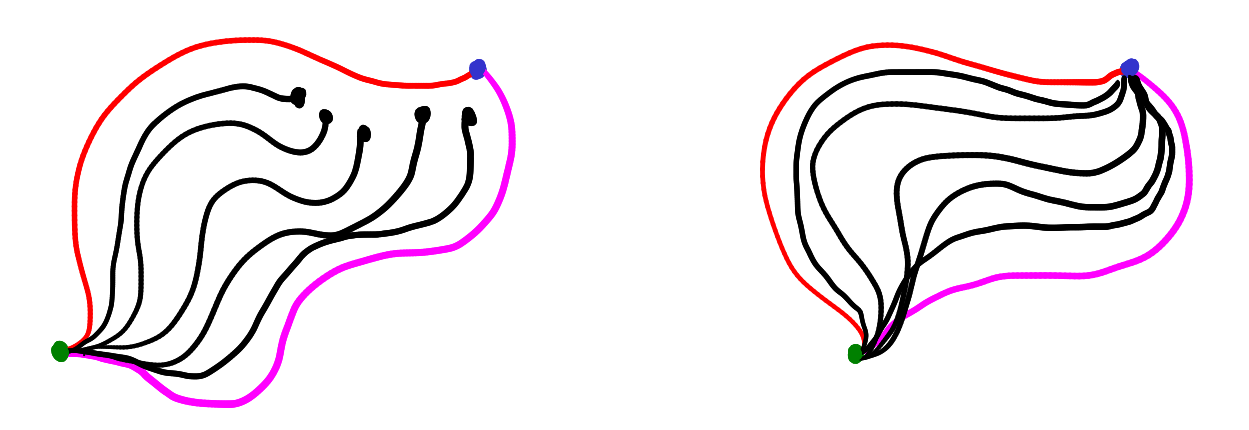}
	\caption{The diference between rel-homotopy and homotopy: On the left, we have a homotopy between two paths, red and pink. Note that not all intermediate paths have the same end points. On the right, a relative homotopy between the red and pink paths, where all intermediate paths have the same end points.}
\end{figure}
\label{symbol:groupoid}

Recall that a groupoid is a category in which all arrows are isomorphisms. We denote by $\textsc{Groupoid}$ the category of groupoids. Groupoids are used in theory of orbifolds, which are topological spaces that locally resemble the quotient of a Euclidean space by a finite group action. They provide a categorical structure that helps to understand the global properties of these spaces. Moreover, they also appear in the theory of covering spaces. In~\cite{BrownBook}, Brown discuss Algebraic Topology using groupoids. Our goal now is to construct the Fundamental Groupoid of a limit space $X$, which we denote by $\Pi(X)$. Of course, we must declare the objects and arrows.  Objects are the points of $X$. An arrow from $x$ to $y$ is a $\simeq_{x,y}$-equivalence class $[\gamma]$ of a path $\gamma\in\mathcal{C}([0,1],X)$ from $x$ to $y$. In this category the composition is given by

	$$\begin{array}{rcl}
	\circ:\Pi(X)\times \Pi(X) & \to     & \Pi(X)\\ 
	\langle [\rho],[\gamma]\rangle &\mapsto & [\gamma*\rho]\\
\end{array}$$ For the fundamental groupoid $\Pi(X)$ to be truly a category, we must prove that this operation is well-defined, is associative and has identities.  Given paths $\gamma,\gamma',\rho$ and $\rho'$ such that $\gamma\simeq_{x,y}\gamma'$ and $\rho\simeq_{y,z}\rho'$. Let us show that  $\gamma*\rho\simeq_{x,z}\gamma'*\rho'$. Let $G\colon [0,1]\to\mathcal{C}([0,1],X)$ be a rel-homotopy between $\gamma$ and $\gamma'$. Notice that
\[\begin{split} K:[0,1]&\to \mathcal{C}([0,1],X)\\
	t&\mapsto G(t)* H(t)\end{split}\]
 is a rel-homotopy between $\gamma*\rho$ and $\gamma'*\rho'$. Indeed, $K$ is a function such that $K(t)$ is path from $x$ to $z$ for each $t\in [0,1]$ and  $K(0)=\gamma*\rho$ and $K(1)=\gamma'*\rho'$. It remains to show that $K$ is continuous.
For a net $\langle x_a\rangle_a$ in $[0,1]$ such that $x_a\to x$, we must show that $\langle G_a*H_a\rangle_a$ is a net in $\mathcal{C}([0,1],X)$ such that $G(x_a)*H(x_a)\to G(x)*H(x)$. If $\langle y_b\rangle_b$ is a net in $[0,1]$ such that $y_b\to y$. Notice that the function

 $$	\begin{array}{rcl}
 	 [0,1]\times [0,1]& \to     & [0,1]\\ 
 	\langle t,s\rangle  & \mapsto & G(t)*H(t)(s)\\
 \end{array} $$
is continuous. It follows that  $G(x_a)*H(x_a)(y_b)\to G(x)*H(x)(y)$. This proves that $K$ is continuous. For  associativity, let $[\gamma]: w\to x$, $[\gamma']: x\to y$ and $[\gamma'']\colon y\to z$ be arrows, notice that
 $$	\begin{array}{rcl}
 	\gamma*(\gamma'*\gamma''): [0,1] & \to     & X\\ 
 	t & \mapsto & \begin{cases}
 		\gamma(2t)&\text{ if }t\in [0,\frac{1}{2}]\\
 		\gamma'(4t-2)&\text{ if }t\in [\frac{1}{2},\frac{3}{4}]\\
 		\gamma''(4t-3)&\text{ if }t\in[\frac{3}{4},1]\end{cases}  \\
 \end{array} $$ and  $$	\begin{array}{rcl}
 (\gamma*\gamma')*\gamma'': [0,1] & \to     & X\\ 
 t & \mapsto & \begin{cases}
 	\gamma(4t)&\text{ if }t\in [0,\frac{1}{4}]\\
 	\gamma'(4t-1)&\text{ if }t\in [\frac{1}{4},\frac{1}{2}]\\
 	\gamma''(2t-1)&\text{ if }t\in[\frac{1}{2},1]\end{cases}  \\
 	\end{array} $$
Consider a continuous pasting of the representatives of the classes without reparametrizations

$$	\begin{array}{rcl}
	\Gamma: [0,3] & \to     & X\\ 
	t & \mapsto & \begin{cases}
		\gamma(t)&\text{ if }t\in [0,1]\\
		\gamma'(t-1)&\text{ if }t\in [1,2]\\
		\gamma''(t-2)&\text{ if }t\in[2,3]\end{cases}  \\
\end{array} $$
There are homeomorphisms $p_0,p_1:[0,1]\to [0,3]$ such that $\Gamma\circ p_0=\gamma*(\gamma'*\gamma'')$ and $\Gamma\circ p_1=(\gamma*\gamma')*\gamma''$. Explicitly, this homeomorphisms are given by

$$	\begin{array}{rcl}
	p_0: [0,1] & \to     & [0,3]\\ 
	t & \mapsto & \begin{cases}
		4t &\text{ if }t\in [0,\frac{1}{2}]\\
		2t+1&\text{ if }t\in [\frac{1}{2},1] \end{cases} \end{array} \\  \\\ \hbox{ and } \\\ \\		
 \begin{array}{rcl}
		p_1: [0,1] & \to     & [0,3]\\ 
		t & \mapsto & \begin{cases}
			2t &\text{ if }t\in [0,\frac{1}{2}]\\
			4t-1&\text{ if }t\in [\frac{1}{2},1]  \end{cases}  \\
\end{array} $$
Note that $p_0$ and $p_1$ are paths from $0$ to $3$. There is a rel-homotopy $H: [0,1]\to\mathcal{C}([0,1],[0,3])$ between $p_0$ and $p_1$, then
\[\begin{split}\Phi\colon [0,1]&\to\mathcal{C}([0,1],X)\\
	t&\mapsto \Gamma\circ H(t)\end{split}\]
is a rel-homotopy between  $\gamma*(\gamma'*\gamma'')$ and $(\gamma*\gamma')*\gamma''$. Then $[(\gamma*\gamma')*\gamma'']=[\gamma*(\gamma'*\gamma'')]$. The identities arrows are given by the classes of constant paths. Indeed, let $\gamma:[0,1]\to X$ be a path from $x$ to $y$. We denote by $\hat{x}$ and $\hat{y}$ the constant path with image $x$ and $y$ respectively. For each $t\in[0,1]$ consider the continuous function 

\[\begin{split}H_t\colon [0,1]&\to X\\
	s&\mapsto \begin{cases} 
		x & \text{if } 0 \leq s \leq \frac{1-t}{2}, \\
		\gamma\left( \frac{2s+t-1}{t+1} \right) & \text{if } \frac{1-t}{2} \leq s\leq 1 \\
		\end{cases}\end{split}\]

Notice that $H_0=\gamma*\hat{x}$ and $H_1=\gamma$, then the function $H:[0,1]\to \mathcal{C}([0,1],X)$ such that $H(t)=H_t$ is a rel-homotopy between $\gamma*\hat{x}$ and $\gamma$.  To see that $\hat{y}*\gamma$ is rel-homotopic to $\gamma$ for each $t\in [0,1]$, consider the continuous function 

\[\begin{split}H'_t\colon [0,1]&\to X\\
	s&\mapsto \begin{cases} 
		\gamma(\frac{2s}{t+1}) & \text{if } 0 \leq s \leq \frac{t+1}{2}, \\
		 y & \text{if } \frac{t+1}{2} \leq s\leq 1 \\
\end{cases}\end{split}\]
Note that $H'_0=\hat{y}*\gamma$ and $H'_1=\gamma$. Moreover $H'_t$ is a path from $x$ to $y$ for each $t\in [0,1]$. This means that the function $H':[0,1]\to\mathcal{C}([0,1],X)$ such that $H'(t)=H'_t$ is a rel-homotopy between $\hat{y}*\gamma$ and $\gamma$. Now we show that every arrow in $\Pi(X) $ is an isomorphism. Composing every arrow $[\gamma]:x\to y \in \Pi(X)$ with $[\gamma\circ \rho=\backvec{\gamma}]:y\to x\in \Pi (X)$, where 

\[\begin{split}\rho\colon [0,1]&\to [0,1]\\
 t&\mapsto 1-t\end{split}\]
 We see that $[\gamma*\backvec{\gamma}]=[\hat{y}]$ and $ [\backvec{\gamma}*\gamma]=[\hat{x}]$. Indeed, for each $t\in[0,1]$ consider the path

 \[\begin{split}H_t\colon [0,1]&\to X \\
 	s&\mapsto \begin{cases} 
 		\gamma(2ts) & \text{ if } 0 \leq s \leq \frac{1}{2}, \\
 		\gamma(2t(1 - s)) & \text{ if } \frac{1}{2} \leq s \leq 1.
 		\end{cases} \end{split}\]
Notice that the function $H: [0,1]\to \mathcal{C}([0,1], X)$ such that $H(t)=H_t$ for each $t\in [0,1]$ is a rel-homotopy between $\hat{x}$ and $\backvec{\gamma}*\gamma$. Similarly, we show that there is rel-homotopy between $\gamma*\backvec{\gamma}$ and $\hat{y}$.
Just as happens with the topological fundamental groupoid, the construction above induces a functor $\Pi\colon\Lim\to\Groupoid$ which sends a continuous function $f\colon X\to Y$ to the functor
\[\begin{split}\Pi(f)\colon\Pi(X)&\to \Pi(Y)\\
	[\gamma]&\mapsto [f\circ \gamma]\end{split}\]
which is well-defined by Proposition \ref{comp}. Notice that the $\Lim$-$\textsc{Groupoid}$ functor above extends the usual $\Top$-$\textsc{Groupoid}$ functor
\[\begin{tikzcd}[cramped]
	\Lim && {\Groupoid} \\
	\\
	\Top
	\arrow["\Pi", from=1-1, to=1-3]
	\arrow["i", from=3-1, to=1-1]
	\arrow["\Pi"', from=3-1, to=1-3]
\end{tikzcd}\]

\begin{remark}
	The reader may be thinking that to compute the fundamental groupoid of a limit space, it suffices to compute the fundamental groupoid of the topological modification. In Example \ref{discret}, we show that this is not the case. But first, we need a lemma.
\end{remark}
\begin{proposition}
	Let $X$ and $Y$ be topological spaces and $f:X\to Y$ be a function. If $X$ is compact and the image $f[X]$ is uncountable, there is $x\in X$ such that $f[V]$ is uncountable for every neighborhood $V\subseteq X$ of $x$.

\end{proposition}
\begin{proof}
	Suppose that for every $x\in X$ there is a neighborhood $V_x\subseteq X$ of $x$ such that $f[V_x]$ is countable. Let $U_x\subseteq V_x\subseteq X$ be an open set such that $x\in X$, we have $X=\bigcup_{x\in X} U_x$. Since $X$ is compact, it follows that there are $x_0,\cdots,x_n\in X$ such that $X=\bigcup_{0\leq i\leq n} U_{x_i}$. Notice that $f[X]=\bigcup_{0\leq i\leq n} f[U_{x_i}]$ is countable. This contradiction tells us that there is $x\in X$ such that $f[V]$ is uncountable for every neighborhood $V\subseteq X$ of $x$.
\end{proof}

\begin{lemma}
	\label{path}
	 If $\gamma\colon [0,1]\to\mathbb{R}$ is a non-constant path, then there is $t\in [0,1]$ such that $\gamma[V]$ is uncountable for every neighborhood $V\subseteq [0,1]$ of $t$.
	 \end{lemma}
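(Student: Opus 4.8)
The plan is to reduce this to the preceding proposition by showing that the image $\gamma[[0,1]]$ is uncountable and then invoking compactness of the domain. The key point is that a non-constant continuous real-valued function on a connected interval must have an uncountable image, after which the proposition immediately supplies the desired point $t$.

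First I would fix the meaning of the hypothesis: since $\gamma$ is a path into $\mathbb{R}$, it is an ordinary continuous function $[0,1]\to\mathbb{R}$, and ``non-constant'' means there exist $a,b\in[0,1]$ with $\gamma(a)\neq\gamma(b)$; assume without loss of generality that $\gamma(a)<\gamma(b)$. The unit interval $[0,1]$ is connected and $\gamma$ is continuous, so by the Intermediate Value Theorem the image contains the whole segment between the two values, that is $[\gamma(a),\gamma(b)]\subseteq\gamma[[0,1]]$. Since a nondegenerate closed interval of real numbers is uncountable, we conclude that $\gamma[[0,1]]$ is uncountable.

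Next I would apply the previous proposition with $X=[0,1]$, $Y=\mathbb{R}$, and $f=\gamma$. The interval $[0,1]$ is compact and we have just verified that $\gamma[[0,1]]$ is uncountable, so both hypotheses of that proposition are satisfied. It therefore yields a point $t\in[0,1]$ such that $\gamma[V]$ is uncountable for every neighborhood $V\subseteq[0,1]$ of $t$, which is exactly the conclusion of the lemma.

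I expect no serious obstacle here: the only substantive ingredient is the Intermediate Value Theorem, and the compactness argument has already been packaged into the preceding proposition. The one point to keep honest is the reduction step, namely that one should not try to locate $t$ by hand but simply feed the uncountable image into the proposition; attempting a direct construction of $t$ would essentially re-prove that proposition, so the clean route is to cite it.
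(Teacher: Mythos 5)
Your proposal is correct and matches the paper's proof: both establish that the image of a non-constant path is a nondegenerate interval (hence uncountable) using connectedness and continuity, and then apply the preceding proposition with the compact domain $[0,1]$.
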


\begin{proof}

Since $[0,1]$ is compact and connected and $\gamma$ is non-constant, the image $\gamma[0,1]$ is a non-degenerated closed interval. In particular, is uncountable. The result follows from the previous proposition. 
\end{proof}

\begin{example}
	\label{discret}
	Recall that in Example \ref{seq} we define a sequential convergence $\hbox{Seq}$ on the real line. Now we show that $\Pi(\langle \mathbb{R},\hbox{Seq}\rangle)$ is discrete in the categorical sense, that is, a category whose only morphisms are the identities. Assume that there is a non-constant path $\gamma:[0,1]\to \langle \mathbb{R},\hbox{Seq}\rangle$. Notice that $\gamma:[0,1]\to\langle \mathbb{R},\to_{\mathbb{R}}\rangle$ is a non-constant path. By Lemma \ref{path} there is $t\in [0,1]$ such that $\gamma [V]$ is uncountable for every neigborhood $V\subseteq[0,1]$ of $t$. Let  $\varphi\in\textsc{Nets}([0,1])$ be a net such that $\varphi^{\uparrow}=\mathcal{N}_t$, it happens that $\varphi\to_{\mathbb{R}} t$ but $\gamma\circ \varphi\not\to_{\hbox{Seq}} \gamma (t)$. Indeed, the tails of $\gamma\circ\varphi$ are uncountable, then $\gamma\circ\varphi$ cannot be subnet of a sequence. This contradiction tells us that every path in $\langle \mathbb{R},\hbox{Seq}\rangle $ is constant. In particular, $\Pi(\langle \mathbb{R},\hbox{Seq}\rangle)\neq\Pi(\langle \mathbb{R},\to_{\mathbb{R}}\rangle)$.

\end{example}

A pointed limit space is a pair $\langle X,x_0\rangle$, where $X$ is a limit space and $x_0\in X$ is a point of $X$. To obtain the fundamental group of a pointed limit space $\langle X,x_0\rangle$, we simply put $\pi_1(X,x_0)=\Pi(X)[x_0,x_0]$, where $\Pi(X)[x_0,x_0]$ is the set of equivalence classes of loops around $x_0\in X$. This coincides precisely with the classical definition presented in~\cite{Kammeyer}.  \label{symbol:fundGroup}

\begin{example}
	 The fundamental group $\pi_1( X, p)$ of the lollipop of Example \ref{lollipop} is the group of integers $\mathbb{Z}$. A loop $\gamma:[0,1]\to \langle X,\lambda\rangle$ at $p\in X$ cannot intercept $S$. Otherwise, there is $t\in [0,1]$ such that $\gamma(t)\in S$. Notice that $\mathcal{O}(\lambda)$ it is a topology finer than the usual subspace topology of the plane. Since $\gamma:[0,1]\to \langle X,\mathcal{O}(\lambda)\rangle$ is continuous, it follows that $\gamma:[0,1] \to X$ in the usual topological sense. Moreover $[0,t]$ is connected, then the image $\gamma[0,t]$ is connected. It follows that $\gamma[0,t]\cap S\cap V\neq\emptyset$ for every neighborhood $V\in\mathcal{N}_p$ of $p$, otherwise $\gamma[0,t] $  would be disconnected. Since $D$ is countable, for all $n\in\mathbb{N}$ there is $\gamma(x_n)\in (S\setminus D)\cap B(p,\frac{1}{2^n})$. Since $\langle \gamma(x_n)\rangle_n$ is a sequence in $S\setminus D$, then $\gamma(x_n)\not\to_{\lambda} \gamma(0)=p$. Notice that $x_n\to 0$  and $\langle \gamma(x_n)\rangle_n$ is a sequence in $S\setminus D$, then $\gamma(x_n)\not\to_{\lambda} \gamma(0)=p$. Contradicting $\gamma$ being continuous. Hence $\gamma:[0,1]\to C$ is a loop at $p$ in $C$. This means that $\pi_1(X,p)=\pi_1(C,p)$. Notice that the preconvergence in $C$ is the usual in the plane, then $\pi_1(C,p)$ coincides with the topological fundamental group and the result follows.
	 
	 \begin{figure}[H]
		\centering
	 	\includegraphics[width=7cm]{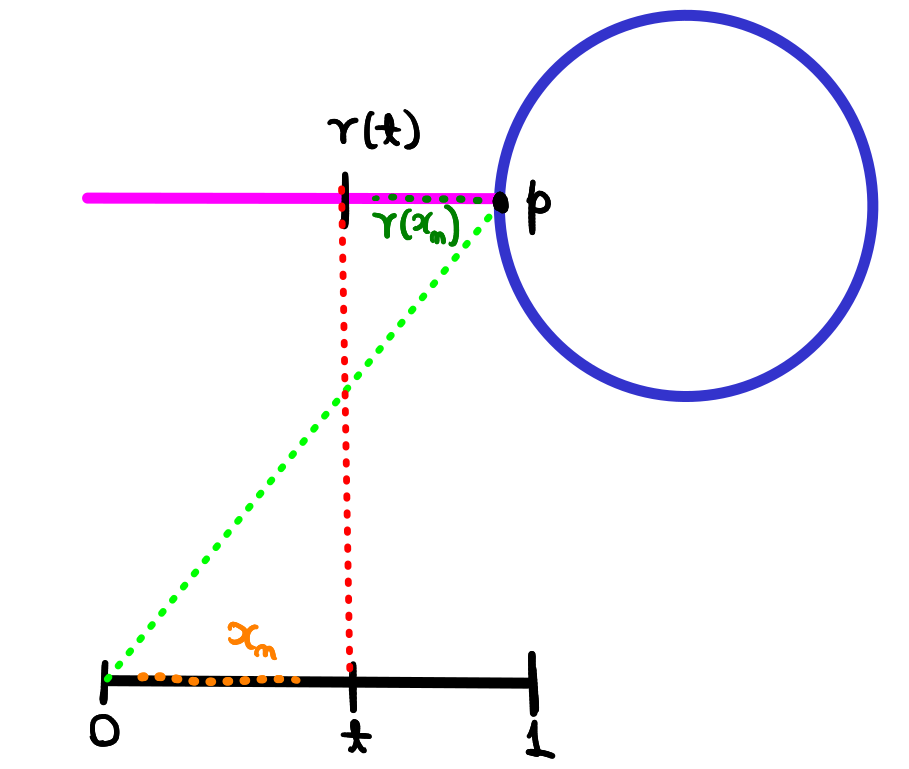}
	 	\caption{An illustration of the presented argument.}
	 \end{figure}
\end{example}

\begin{proposition}
	Let $X$ and $Y$ be limit spaces. The fundamental group $\Pi(X\times Y)$ is isomorphic to the product $ \Pi(X)\times \Pi(Y)$. In particular, $\pi_1(X\times Y,\langle x_0,y_0\rangle)$ is isomorphic $\pi_1(X,x_0)\times \pi_1(Y,y_0)$ for every points $x_0\in X$ and $y_0\in Y$.
	\label{gprod}
\end{proposition}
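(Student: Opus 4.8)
The plan is to produce the isomorphism directly as a functor built from the two projections, rather than by hand on generators. Since $\pi_X\colon X\times Y\to X$ and $\pi_Y\colon X\times Y\to Y$ are continuous, functoriality of $\Pi$ supplies functors $\Pi(\pi_X)$ and $\Pi(\pi_Y)$, and I would pair them into the functor $F\colon\Pi(X\times Y)\to\Pi(X)\times\Pi(Y)$ given on arrows by $[\gamma]\mapsto\langle[\pi_X\circ\gamma],[\pi_Y\circ\gamma]\rangle$. On objects $F$ fixes each point $\langle x,y\rangle$, so it is already a bijection on objects; the whole statement then reduces to checking that $F$ is bijective on every hom-set, because a functor that is bijective on objects and fully faithful is an isomorphism of categories. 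I would then fix endpoints $\langle x_0,y_0\rangle$ and $\langle x_1,y_1\rangle$ of $X\times Y$ and treat surjectivity and injectivity of $F$ on the corresponding hom-set separately.

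For surjectivity, given a path $\alpha$ from $x_0$ to $x_1$ in $X$ and a path $\beta$ from $y_0$ to $y_1$ in $Y$, I would form the coordinatewise path $\langle\alpha,\beta\rangle\colon t\mapsto\langle\alpha(t),\beta(t)\rangle$. Its continuity is immediate from the characterization of the product preconvergence, namely that a net converges in $X\times Y$ iff it converges in each coordinate: if $t_a\to t$ then $\alpha(t_a)\to\alpha(t)$ and $\beta(t_a)\to\beta(t)$, hence $\langle\alpha(t_a),\beta(t_a)\rangle\to\langle\alpha(t),\beta(t)\rangle$. Since $\pi_X\circ\langle\alpha,\beta\rangle=\alpha$ and $\pi_Y\circ\langle\alpha,\beta\rangle=\beta$, this exhibits $\langle[\alpha],[\beta]\rangle$ as the image $F[\langle\alpha,\beta\rangle]$.

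For injectivity I would suppose $F[\gamma]=F[\gamma']$, i.e.\ I would take rel-homotopies $H\colon[0,1]\to\mathcal{C}([0,1],X)$ between the $X$-coordinates and $G\colon[0,1]\to\mathcal{C}([0,1],Y)$ between the $Y$-coordinates, and glue them coordinatewise into $K\colon[0,1]\to\mathcal{C}([0,1],X\times Y)$, $K(t)=\langle H(t),G(t)\rangle$, where $\langle H(t),G(t)\rangle(s)=\langle H(t)(s),G(t)(s)\rangle$. That each $K(t)$ is a path with the correct endpoints, and that $K(0)=\gamma$, $K(1)=\gamma'$ (using the identity $\gamma=\langle\pi_X\circ\gamma,\pi_Y\circ\gamma\rangle$), is routine. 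The main obstacle is the continuity of $K$ as a map into the continuously convergent function space. I expect to settle it using the net description of continuous convergence: to show $K(t_a)\to_{\mathcal{C}}K(t)$ whenever $t_a\to t$, it suffices to prove $K(t_a)(s_b)\to K(t)(s)$ for every net $s_b\to s$ in $[0,1]$; continuity of $H$ and $G$ yields $H(t_a)(s_b)\to H(t)(s)$ and $G(t_a)(s_b)\to G(t)(s)$ over $\mathbb{A}\times\mathbb{B}$, and one further appeal to the product preconvergence gives $K(t_a)(s_b)=\langle H(t_a)(s_b),G(t_a)(s_b)\rangle\to K(t)(s)$. Thus $K$ is a rel-homotopy and $[\gamma]=[\gamma']$. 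Having established that $F$ is an isomorphism of groupoids, I would finish by restricting $F$ to the endomorphisms of $\langle x_0,y_0\rangle$; since $F$ preserves composition and identities this restriction is a group isomorphism, which gives $\pi_1(X\times Y,\langle x_0,y_0\rangle)\cong\pi_1(X,x_0)\times\pi_1(Y,y_0)$.
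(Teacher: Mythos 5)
Your proposal is correct and follows essentially the same route as the paper: both construct the functor $\Pi(X\times Y)\to\Pi(X)\times\Pi(Y)$ induced by the two projections and show it is an isomorphism of groupoids. The only difference is that the paper delegates the verification that this functor is fully faithful to the classical topological case (citing Vick), whereas you carry it out explicitly in the limit-space setting via the product preconvergence and the net description of continuous convergence — a worthwhile addition, since the continuity of the coordinatewise rel-homotopy $K(t)=\langle H(t),G(t)\rangle$ into $\mathcal{C}([0,1],X\times Y)$ is precisely the step that needs re-checking outside \textsc{Top}.
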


\begin{proof}
	Notice that 
	
\[\begin{tikzcd}[ampersand replacement=\&,cramped]
	{\Pi(X\times Y)} \&\& {\Pi(X)\times \Pi(Y)} \\
	{\langle x_0,y_0\rangle} \&\& {\langle x_0,y_0\rangle} \\
	{\langle x_1,y_1\rangle} \&\& {\langle x_1,y_1\rangle}
	\arrow[from=1-1, to=1-3]
	\arrow[maps to, from=2-1, to=2-3]
	\arrow["{[\gamma]}"', from=2-1, to=3-1]
	\arrow["{[\pi_X\circ\gamma]}"', curve={height=12pt}, from=2-3, to=3-3]
	\arrow["{[\pi_Y\circ \gamma]}", curve={height=-12pt}, from=2-3, to=3-3]
	\arrow[maps to, from=3-1, to=3-3]
\end{tikzcd}\]
it is an isomorphism of categories. Indeed, this is a functor because $$(\pi_X\circ\gamma)*(\pi_X\circ\gamma')=\pi_X\circ(\gamma*\gamma') \ \hbox{ and } \ (\pi_Y\circ\gamma)*(\pi_Y\circ\gamma')=\pi_Y\circ(\gamma*\gamma')$$ for paths $\gamma,\gamma':[0,1]\to X\times Y$. The proof that this functor is an isomorphism of categories is the same as in the classical topological case, see~\cite{vick}.
	
\end{proof}

\begin{example}
	Let us apply the previous proposition to find $\pi_1 (X\times X,\langle p,p\rangle )$, where $X$ is the lollipop of Example \ref{lollipop}. Since $\pi_1(X,p)=\mathbb{Z}$, it follows that $\pi_1( X\times X,\langle p,p\rangle)$ is isomorphic to $ \pi_1(X,p)\times \pi_1(X,p)=\mathbb{Z}\times \mathbb{Z}$. This means that $\pi_1 (X\times X,\langle p,p\rangle )$ is $\mathbb{Z}\times \mathbb{Z}$.
	This illustrates the usefulness of Proposition \ref{gprod}, because this cartesian product is geometrically strange (see  Figure \ref{prod}) and it could be challenging to calculate the fundamental group directly from some intuition.
	
	\begin{figure}[H]
		\centering
		\includegraphics[width=7cm]{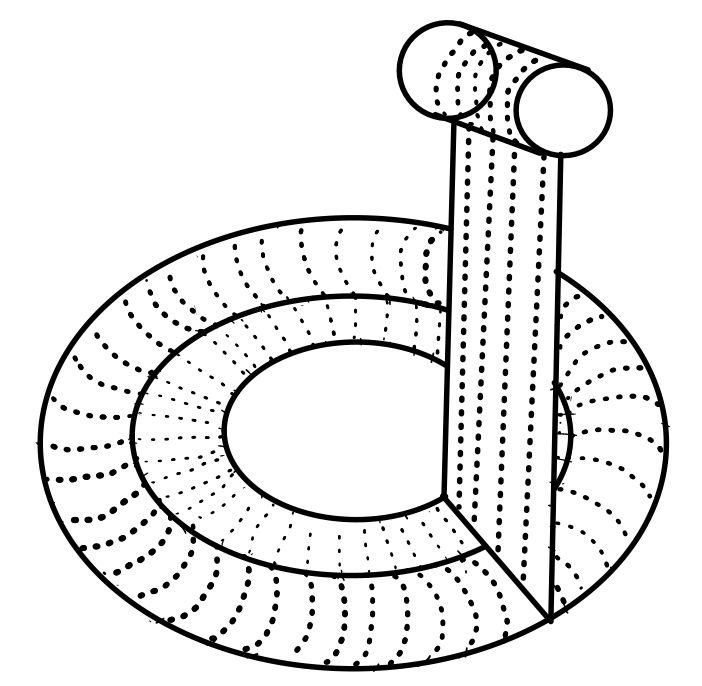}
		\caption{To obtain this space, the idea is to attach a lollipop at each point of the lollipop.}
		\label{prod}
	\end{figure}
\end{example}

\newpage

\ 
 
\newpage

\chapter{The Seifert-Van Kampen Theorem for fundamental groupoids of limit spaces}

\label{chap4}

The proof of the Seifert-van Kampen Theorem for fundamental groupoids of limit spaces presented in this chapter is an adaptation of the one found in~\cite{Kammeyer}. The most significant modification being the use of a convergence system rather than an open cover. First, we need to recall the concept of a colimit. 	Let \(\mathcal{C}\) be a category and \(F: \mathcal{J} \to \mathcal{C}\) a functor. A cocone with vertex \(C\) for \(F\) is a collection of morphisms \(\{ f_j: F(j) \to C \}_{j \in \mathcal{J}}\) such that, for every morphism \(f: j \to k\) in \(\mathcal{J}\), we have \(f_k \circ F(f) = f_j\). A colimit of \(F\) is a universal cocone, i.e., a cocone \(\{ f_j: F(j) \to C \}_{j \in \mathcal{J}}\) such that, for any other cocone \(\{ g_j: F(j) \to N \}_{j \in \mathcal{J}}\), there is a unique morphism \(v: C \to N\) such that \(v \circ f_j = g_j\) for all \(j \in \mathcal{J}\). Colimits are essential because they provide a powerful language to describe and study universal constructions.

\begin{lemma}[Lebesgue]
	Let $H\colon K\to Z$ be a continuous function from a compact metric space $K$ to a convergence space $Z$. If $\mathcal{C}$ is a convergence system for $Z$, then there is a real number $\delta>0$ such that every subset $A\subseteq K$ of diameter less than $\delta$ is contained in some $C\in\mathcal{C}$.
\end{lemma}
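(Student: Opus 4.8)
The plan is to transport the convergence system from $Z$ back to $K$ along $H$, observe that on the metric space $K$ a convergence system is essentially an open cover, and then invoke the classical Lebesgue number lemma. Throughout, the conclusion ``$A$ is contained in some $C\in\mathcal{C}$'' is to be read as $H[A]\subseteq C$, equivalently $A\subseteq H^{-1}[C]$, since $A\subseteq K$ while $C\subseteq Z$.

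First I would set $\mathcal{D}=H^{-1}(\mathcal{C})=\{H^{-1}[C]:C\in\mathcal{C}\}$. Because $H$ is continuous and $\mathcal{C}$ is a convergence system for $Z$, the earlier proposition on preimages of convergence systems guarantees that $\mathcal{D}$ is a convergence system for $K$. The key observation is then that, since $K$ carries a metric topology $\tau$ whose convergence is $\lim_\tau$, a local convergence system at a point is exactly a family one of whose members is a neighborhood of that point. Concretely, fix $x\in K$. By Proposition \ref{prop124} there is a net $\varphi$ with $\varphi^{\uparrow}=\mathcal{N}_x$, and $\varphi\to_{\lim_\tau}x$. Since $\mathcal{D}$ is a convergence system, there is $C\in\mathcal{C}$ with $H^{-1}[C]\in\varphi^{\uparrow}=\mathcal{N}_x$; that is, $H^{-1}[C]$ is a neighborhood of $x$, so $x\in\hbox{int}_\tau(H^{-1}[C])=\hbox{inh}_{\lim_\tau}(H^{-1}[C])$. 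As $x$ was arbitrary, the family $\mathcal{U}=\{\hbox{int}_\tau(H^{-1}[C]):C\in\mathcal{C}\}$ is an open cover of $K$.

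Finally, $K$ is a compact metric space and $\mathcal{U}$ is an open cover, so the classical Lebesgue number lemma yields $\delta>0$ such that every $A\subseteq K$ with $\mathrm{diam}(A)<\delta$ lies in some $\hbox{int}_\tau(H^{-1}[C])\subseteq H^{-1}[C]$, whence $H[A]\subseteq C$. If a self-contained argument is preferred, I would instead argue by contradiction: assuming no such $\delta$ exists, take for each $n$ a set $A_n$ of diameter $<\tfrac{1}{n}$ with $H[A_n]\not\subseteq C$ for every $C\in\mathcal{C}$, choose $x_n\in A_n$, extract a convergent subnet $x_n\to x$ by compactness, locate a neighborhood $H^{-1}[C]$ of $x$ as above, and derive a contradiction once a tail of $\langle x_n\rangle$ and the correspondingly small sets $A_n$ fall inside that neighborhood. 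The main obstacle is the middle step, namely reducing ``convergence system on a metric space'' to ``open cover'' via the identity $\hbox{inh}_{\lim_\tau}=\hbox{int}_\tau$ together with the surjectivity of $(\bullet)^{\uparrow}$; once this reduction is secured, the remainder is the standard metric compactness argument.
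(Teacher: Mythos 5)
Your proof is correct, and it takes the same route the paper intends: the paper's own ``proof'' is just the remark that the argument is the same as in the topological case (citing Kammeyer), i.e.\ a reduction to the classical Lebesgue number lemma. Your contribution is to make the one nontrivial bridging step explicit --- namely that the pulled-back convergence system $H^{-1}(\mathcal{C})$ on the metric space $K$ yields an open cover by interiors, which you justify correctly by feeding the net with $\varphi^{\uparrow}=\mathcal{N}_x$ (guaranteed by the surjectivity of $(\bullet)^{\uparrow}$) into the definition of a convergence system to conclude $H^{-1}[C]\in\mathcal{N}_x$ --- and this is exactly the detail the paper's citation glosses over.
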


\begin{proof}
	The proof is the same as in the topological case; see~\cite{Kammeyer}.
\end{proof}

\begin{theorem}[van-Kampen--Groupoid Version for limit spaces]
	Let $X$ be a limit space and $\mathcal{O}$ be a convergence system of $X$, which is closed under finite intersections. Consider $\mathcal{O}$ as a small category with morphisms given by inclusions. The colimit of the functor 
	\[\begin{tikzcd}[ampersand replacement=\&,cramped]
		{\mathcal{O}} \& {\textsc{Groupoid}} \\
		U \& {\Pi(U)} \\
		V \& {\Pi(V)}
		\arrow["{\Pi|_{\mathcal{O}}}", from=1-1, to=1-2]
		\arrow[maps to, from=2-1, to=2-2]
		\arrow[ from=2-1, to=3-1]
		\arrow[ from=2-2, to=3-2]
		\arrow[maps to, from=3-1, to=3-2]
	\end{tikzcd}\] is the fundamental groupoid $\Pi(X)$.
\end{theorem}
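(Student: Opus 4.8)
The plan is to verify that the fundamental groupoid $\Pi(X)$, equipped with the functors $\Pi(\iota_U)\colon \Pi(U)\to\Pi(X)$ induced by the inclusions $\iota_U\colon U\hookrightarrow X$, satisfies the universal property of the colimit of $\Pi|_{\mathcal{O}}$. That these functors form a cocone is immediate from the functoriality of $\Pi$ established in Chapter~\ref{chap3}, since $\iota_V\circ\iota_U^V=\iota_U$ whenever $U\subseteq V$ in $\mathcal{O}$. The substance is to show the cocone is universal: for any groupoid $G$ and any cocone $\{\phi_U\colon\Pi(U)\to G\}_{U\in\mathcal{O}}$, there is a unique functor $\Phi\colon\Pi(X)\to G$ with $\Phi\circ\Pi(\iota_U)=\phi_U$ for every $U$.

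First I would define $\Phi$ on objects. Because the limit convergence is centered, $\mathcal{O}$ is a cover (as shown in the example following the definition of convergence system), so each $x\in X$ lies in some $U\in\mathcal{O}$; set $\Phi(x)=\phi_U(x)$. This is well defined, for if $x\in U\cap V$ then $U\cap V\in\mathcal{O}$ since $\mathcal{O}$ is closed under finite intersections, and the cocone relations along $U\cap V\hookrightarrow U$ and $U\cap V\hookrightarrow V$ force $\phi_U(x)=\phi_{U\cap V}(x)=\phi_V(x)$. To define $\Phi$ on an arrow $[\gamma]$ I invoke the Lebesgue Lemma: since $[0,1]$ is a compact metric space and $\gamma\colon[0,1]\to X$ is continuous, there is a partition $0=t_0<\dots<t_n=1$ such that each $\gamma([t_{i-1},t_i])$ lies in some $U_i\in\mathcal{O}$. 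Each restriction corestricts to a path $\gamma_i$ in $U_i$ (the subspace preconvergence $L|_{U_i}$ leaves nets unchanged, so continuity is preserved), and I set $\Phi([\gamma])$ to be the composite in $G$ of the morphisms $\phi_{U_i}([\gamma_i])$, taken in the order dictated by the concatenation $\gamma\simeq\gamma_1*\cdots*\gamma_n$ guaranteed by the Pasting Lemma for limit spaces.

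I then must check that $\Phi([\gamma])$ is independent of (i) the partition and the choice of the sets $U_i$, and (ii) the rel-homotopy class of $\gamma$. For (i), refining a partition or replacing $U_i$ by another set $U_i'$ containing the same piece is absorbed by passing to $U_i\cap U_i'\in\mathcal{O}$ and using the cocone compatibility, exactly as in the well-definedness on objects; two arbitrary partitions are compared through a common refinement. Claim (ii) is the technical heart and the step I expect to be the main obstacle: given a rel-homotopy $H\colon[0,1]\to\mathcal{C}([0,1],X)$, I would regard it as a map on the compact metric square $[0,1]\times[0,1]$ and apply the Lebesgue Lemma there to obtain a grid so fine that $H$ carries each small rectangle into some member of $\mathcal{O}$; then, advancing across the grid one cell at a time and repeatedly using the groupoid relations in $G$ together with the cocone compatibility on intersections, I show the value is unchanged as $\gamma$ is deformed to $\gamma'$. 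This is where the limit-space hypotheses are genuinely used, since the continuity of $H$ as a map on the square, and the continuity of the concatenations of its restrictions, rely on exponentiality (Proposition~\ref{exp}) and the Pasting Lemma.

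It remains to verify that $\Phi$ is a functor, that $\Phi\circ\Pi(\iota_U)=\phi_U$, and that $\Phi$ is unique. Functoriality (preservation of identities and of concatenation) follows because a subdivision of $\gamma*\rho$ is obtained by juxtaposing subdivisions of $\gamma$ and $\rho$, so $\Phi([\gamma*\rho])$ splits as the corresponding composite in $G$. The identity $\Phi\circ\Pi(\iota_U)=\phi_U$ holds because a path lying in $U$ admits the trivial one-block subdivision with set $U$. Finally, uniqueness is forced: any functor $\Psi$ with $\Psi\circ\Pi(\iota_U)=\phi_U$ agrees with $\Phi$ on objects, and on an arbitrary arrow $[\gamma]$ it must send each local piece $[\gamma_i]$ to $\phi_{U_i}([\gamma_i])$, whence $\Psi([\gamma])=\Phi([\gamma])$ by functoriality. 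This establishes that $\Pi(X)$ is the colimit.
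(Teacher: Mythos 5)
Your proposal follows essentially the same route as the paper's proof: exhibiting $\langle\Pi(U)\to\Pi(X)\rangle_{U\in\mathcal{O}}$ as a cocone, defining the mediating functor on objects via the cocone compatibility through intersections, on arrows via a Lebesgue-number subdivision of the path, and establishing well-definedness by reducing subdivision independence to the two-piece case and handling rel-homotopy invariance with a fine grid on the square $[0,1]\times[0,1]$. The only cosmetic difference is that you compare two partitions through a common refinement while the paper argues by induction from the two-piece case; the substance is identical.
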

 
\begin{proof}
	Since $\mathcal{O}$ is closed under finite intersections, the diagram commutes 
	\[\begin{tikzcd}[ampersand replacement=\&,cramped]
		{\Pi(U)} \& {\Pi(X)} \\
		{\Pi(U\cap V)} \& {\Pi(V)}
		\arrow[from=1-1, to=1-2]
		\arrow[from=2-1, to=1-1]
		\arrow[from=2-1, to=1-2]
		\arrow[from=2-1, to=2-2]
		\arrow[from=2-2, to=1-2]
	\end{tikzcd}\]
	for each $U,V\in\mathcal{O}$. This means that $\langle \Pi(U)\to\Pi(X)\rangle_{U\in\mathcal{O}}$ is a cocone in $\Pi_{|\mathcal{O}}$. Let us show that it is universal. Let $\langle F_U:\Pi(U)\to\mathcal{G}\rangle _{U\in\mathcal{O}}$ be a cocone, where $\mathcal{G}$ is a groupoid, our goal is to show that there is a unique functor $F:\Pi(X)\to\mathcal{G}$ such that the diagram commutes 
	
	\[\begin{tikzcd}[ampersand replacement=\&,cramped]
		{\Pi(U)} \& {\mathcal{G}} \\
		{\Pi(X)}
		\arrow["{F_U}", from=1-1, to=1-2]
		\arrow[from=1-1, to=2-1]
		\arrow["F"', from=2-1, to=1-2]
	\end{tikzcd}\]
	On objects we define $F$ by $F(x)=F_U(x)$ if $x\in U$ and $U\in\mathcal{O}$. We must show that it is well-defined. Since $\langle F_U:\Pi(U)\to\mathcal{G}\rangle _{U\in\mathcal{O}}$ is a cocone, the diagram commutes
	

	\[\begin{tikzcd}[ampersand replacement=\&,cramped]
		{\Pi(U)} \\
		{\Pi(U\cap V)} \& {\mathcal{G}} \\
		{\Pi(V)}
		\arrow["{F_U}", curve={height=-6pt}, from=1-1, to=2-2]
		\arrow[from=2-1, to=1-1]
		\arrow["{F_{U\cap V}}"', from=2-1, to=2-2]
		\arrow[from=2-1, to=3-1]
		\arrow["{F_V}"', curve={height=6pt}, from=3-1, to=2-2]
	\end{tikzcd}\]
	It follows that $F_U(x)=F_{U\cap V}(x)=F_V(x)$, where $V,U\in\mathcal{O}$ is such that $x\in U\cap V$. Now we define $F$ in the arrows. Let $[\gamma]:x\to y$ be an arrow between $x$ and $y$ in $\Pi(X)$. Choose a Lebesgue number $\delta$ for the convergence system $\{\gamma^{-1}[U]:U\in\mathcal{O}\}$ of the compact metric space $[0,1]$. Subdividing $[0,1]$ into $n$ intervals of diameter less than $\delta$, we see that $\gamma=\gamma_1*\gamma_2*\cdots*\gamma_n$ is a concatenation of $n$ paths $\gamma_1,\cdots,\gamma_n$, where the image of $\gamma_i$ is contained in some $U_i\in\mathcal{O}$ for $i=1,\cdots, n$, see Figure \ref{conc}. Then we define $F([\gamma])=F_{U_n}([\gamma_n])\circ\cdots \circ F_{U_1}([\gamma_1])$. It is remains show that this construction is well-defined.

	\begin{figure}[H]
		\centering
		\includegraphics[width=6cm]{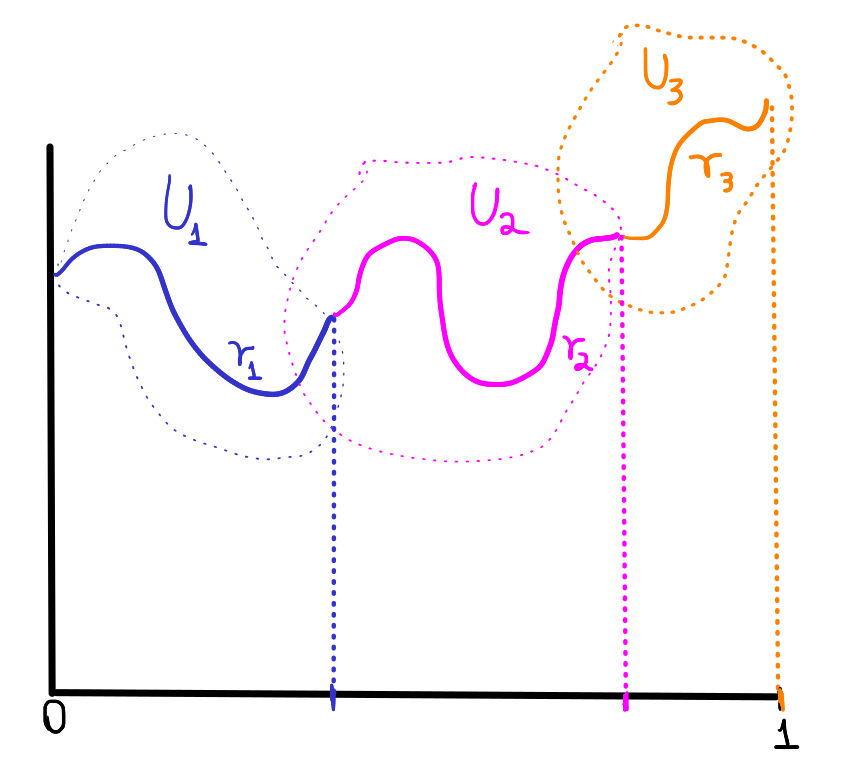}
		\caption{An illustration of how we can see 
			$\gamma$ as a concatenation.}
		\label{conc}
	\end{figure}

	\begin{enumerate}
		\item Let us show that $F([\gamma])$ does not depend on the subdivision and the choice of the sets $U_i$. For this, it is enough to prove the case $\gamma=\gamma_1*\gamma_2$, where the images of $\gamma,\gamma_1$ and $\gamma_2$ are contained is $U_0,U_1$ and $U_2$, respectively. The proof of the general case follows by induction. By commutativity of the cocone diagram $\langle F_U:\Pi(U)\to\mathcal{G}\rangle_{U\in\mathcal{O}}$ and the functoriality of $F_{U_0}$, it happens that

		$$F_{U_2}([\gamma_2])\circ F_{U_1}([\gamma_1])=F_{U_2\cap U_0}([\gamma_2])\circ F_{U_1\cap U_0}([\gamma_1])=F_{U_0}([\gamma_2])\circ F_{U_0}(\gamma_1)=$$
		
		$$=F_{U_0}([\gamma_1*\gamma_2])=F_{U_0}([\gamma])$$
		
		\item Let $H:[0,1]\times [0,1]\to X$ be a rel-homotopy from $\gamma$ to $\gamma'$.  Since $\{H^{-1}(U)\}_{U\in\mathcal{O}}$ is a convergence system for the compact metric space $[0,1]\times [0,1]$ we can choose a Lebesgue number $\delta$ for this convergence system. Consider $[0,1]\times [0,1]$ with the maximum metric, then we subdivide the square \([0,1] \times [0,1]\) into $n$ squares of diameter $\frac{1}{n}$ where $n\in\mathbb{N}^*$ is such that $\frac{1}{n}<\delta$. The argument that follows is guided by the intuition of the following figure.
		
		\begin{figure}[H]
			\centering
			\includegraphics[width=13cm]{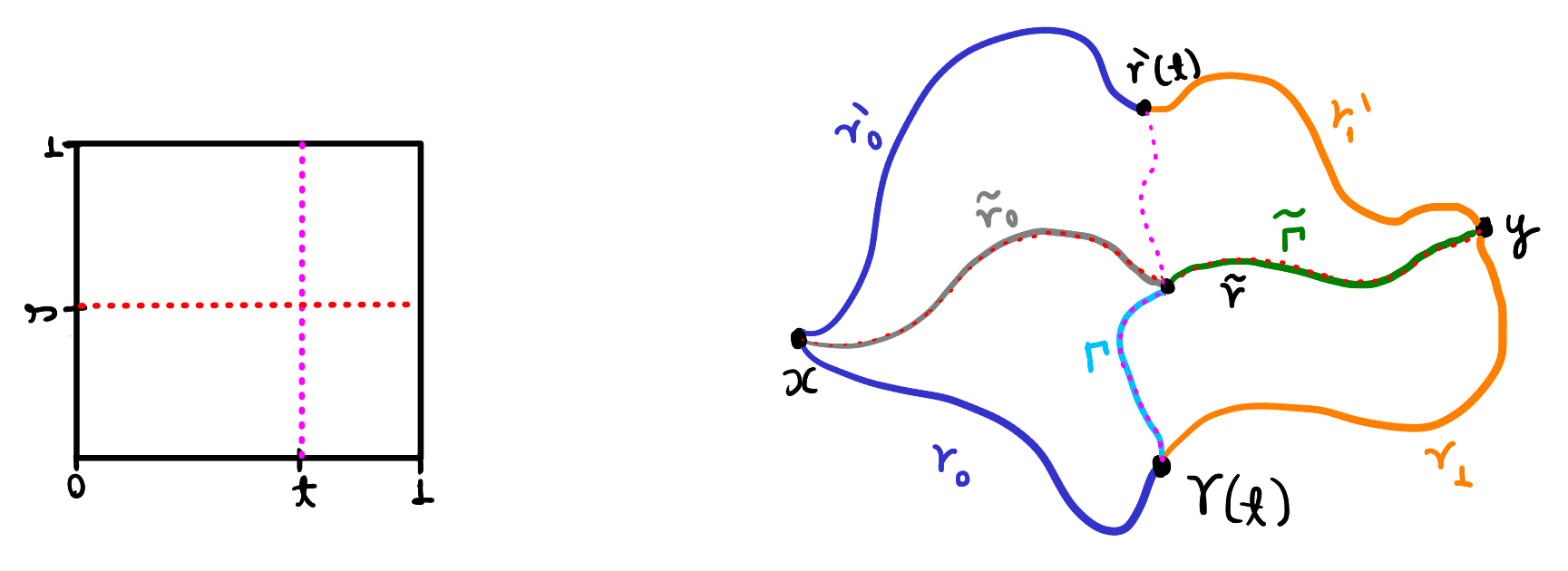}
			\caption{The idea is move the path \(\gamma\) to \(\gamma'\) by homotopies relative to the endpoints through the squares.
			}
			\label{migue}
		\end{figure}

		Consider $\gamma=\gamma_0*\gamma_1$  and $\gamma'=\gamma'_0*\gamma'_1$ where the image of $\gamma_0$ is contained in $U$, the image of $\gamma_1$ is contained in $V$, the image of $\gamma'_0$ is contained in $U'$ and the image of $\gamma'_1$ is contained in $V'$. Let $t\in [0,1]$ such that $\gamma(t)$ is the endpoint of $\gamma_0$ and $s\in [0,1] $ such that $t\leq s$  and the diameter of $[0,t]\times [0,s]$, $[t,1]\times [0,s]$, $[0,t]\times [s,1]$ and $[t,1]\times [s,1]$ are less than $\delta$, it happens that $H[[0,t]\times [0,s]]\subseteq U$,   $H[[t,1]\times [0,s]]\subseteq U'$,  $H[[0,t]\times [s,1]]\subseteq V$ and  $H[[t,1]\times [s,1]]\subseteq V'$. Consider the path

		\[\begin{split}\tilde{\gamma}\colon [0,1]&\to X \\
			r&\mapsto \begin{cases} 
				H(r,t) & \text{ if } 0 \leq r \leq s \\
				H(s,r) & \text{ if } s \leq r \leq 1
		\end{cases} \end{split}\]
		
		Notice that $\tilde{\gamma}$ is a path in $V$ and $\tilde{\gamma}\cong_{\gamma (t),y} \gamma_1$. It follows that $F_V([\tilde{\gamma}])=F_V([\gamma_1])$. Write $\tilde{\gamma}$ as a concatenation $\tilde{\gamma}=\Gamma*\tilde{\Gamma}$. Consider a path $\tilde{\gamma}_0$  in $U$ that starts at $x$ and ends at the endpoint of $\Gamma$ constructed in a way similar to $\tilde{\gamma}$ such that $\gamma_0*\Gamma$ is rel-homotopic to $\tilde{\gamma}_0$. Since $\tilde{\Gamma}$ is rel-homotopic to $\backvec{\Gamma}*\gamma_1$, it follows that $$F([\tilde{\gamma}_0*\tilde{\Gamma}])=F_V([\tilde{\Gamma}])\circ F_U([\tilde{\gamma}_0])=F_V([\backvec{\Gamma}*\gamma_1])\circ F_U([\gamma_0*\Gamma])=F_V([\gamma_0*\Gamma*\backvec{\Gamma}*\gamma_1])=F([\gamma])$$Proceeding with the same argument, we will arrive at $$F([\gamma])=F_V([\gamma_1])\circ F_U([\gamma_0])=F_{V'}([\gamma'_1])\circ F_{U'}([\gamma'_0])$$ This means that $F$ is well-defined on arrows.

	\end{enumerate}
	Moreover, by construction, $F$ is a functor that factorizes the cocone $\langle F_U:\Pi(U)\to \mathcal{G}\rangle_{U\in\mathcal{O}}$ over the
	cocone $\langle \Pi(U)\to\Pi(X) \rangle_{U\in\mathcal{O}}$ and it is unique with this property.

\end{proof}

\begin{example}
	In Example \ref{discret}, we directly showed that the fundamental groupoid of $\langle \mathbb{R},\hbox{Seq}\rangle$ is discrete. Now we will use the van-Kampen theorem for this. Notice that for a real number $x_0\in \mathbb{R}$ the family $$\mathcal{N}=\{N\cup \{x_0\}:N\subseteq \mathbb{R}\hbox{ is countable} \}$$ is a convergence system closed under finite intersections. Indeed, 
	
	\begin{enumerate}[i)]
		\item If $\varphi\in\textsc{Nets}(\mathbb{R})$ is a net such that $\varphi\to_{\hbox{Seq}} x$. There is a sequence $s:\mathbb{N}\to\mathbb{R}$ such that $s\to_{\mathbb{R}} x$ and $s^{\uparrow}\subseteq\varphi^{\uparrow}$. The set $N=s[\mathbb{N}]$ is countable, then $N\cup\{x_0\}\in\mathcal{N}$. Moreover, there is $d'\in\hbox{dom}(\varphi)$ such that $\varphi[d'^{\uparrow}]\subseteq N\subseteq N\cup\{x_0\}$. It follows that $N\cup \{x_0\}\in\varphi^{\uparrow}$. This means that $\mathcal{N}$ is a convergence system. 
		
		\item For $N\cup\{x_0\},M\cup\{x_0\}\in\mathcal{N}$, notice that $$(N\cup \{x_0\})\cap( M\cup\{x_0\})=(N\cup M)\cup \{x_0\}\in\mathcal{N}$$Then $\mathcal{N}$ is closed under finite intersections.
	\end{enumerate}
	Since each $N'\in\mathcal{N}$ is countable, a path $\gamma:[0,1]\to N'$ is constant. Otherwise $\gamma:[0,1] \to \langle N',\mathcal{O}(\hbox{Seq}|_{N'})$ is a non-constant path, where $\mathcal{O}(\hbox{Seq}|_{N'}) $ in the usual subspace topology. Your image should be an non-degenerated interval, but that cannot happen because $N'$ does not contain non-degenerate open intervals. This means that $\Pi (N')$ is discrete of each $N'\in\mathcal{N}$. Follows from Seifert-van Kampen Theorem that $\Pi(X)=\hbox{colim}_{N'\in\mathcal{N}}\Pi(N')$. This proves that $\Pi(X)$ is discrete.
\end{example}

\newpage

\ 

\newpage

\chapter{Some remarks and future works}

\label{chap5}

In summary, this work illustrates how to ``extend" Algebraic Topology to something that might be called ``Algebraic Convergence". The simplicity and clarity of the proofs presented here demonstrate how the use of nets is beneficial to convergence theory. Additionally, we also observed that the category of limit spaces prevails over the category of topological spaces. However, it is worth mentioning that the category of limit spaces may be too general for future works. Compactness is fundamental in Topology, and for this purpose, the category of pseudotopological spaces is more convenient, see~\cite{schechter1996handbook, ref3}. In~\cite{riesernew}, Rieser shows that the categories of pseudotopological and limit spaces admit cofibration category structures, and that the category of pseutopological spaces admits a model
category structure. In particular, he shows that pseudotopological suspensions coincide with their topological counterparts when applied to spheres. This ensures that many classical results in the homotopy theory of topological spaces can be transferred to pseudotopological spaces. Our choice of limit spaces was motivated by the minimal hypotheses needed for our current purposes. We may pursue research in the following directions:
	\begin{enumerate}[i)]
		\item A universal cover of a connected topological space $X$ is a simply connected space $Y$ with a continuous function $f:Y\to X$ that is a covering map. A natural step would be to extend or adapt results concerning universal coverings to convergence spaces. This was recently proposed by Marroquín~\cite{marroquin}, using the language of filters. This work addresses connectedness in convergence spaces, but it is worth noting that the definition presented is the same as in~\cite{ref3}, which means that the topological modification is a connected topological space.
		
		\item   More recently, Milićević and Scoville~\cite{MS} discussed how to further develop singular homology and higher homotopy groups in the category of pseudotopological spaces.

\item  Recall that a sheaf \( \mathcal{F} \) on a topological space \( \langle X,\tau\rangle\) is a pair $\langle X,\mathcal{F}(U)_{U\in \tau}\rangle$ where \( \mathcal{F}(U) \) is a set, called the set of sections over \( U \), and to each inclusion of open sets \( V \subseteq U \) there is a restriction \( \rho_{U,V} : \mathcal{F}(U) \rightarrow \mathcal{F}(V) \), satisfying the following properties
	
	\begin{enumerate}
		\item  For any open set \( U \), the restriction \( \rho_{U,U} \) is the identity on \( \mathcal{F}(U) \), and if \( W \subseteq V \subseteq U \), then \( \rho_{U,W} = \rho_{V,W} \circ \rho_{U,V} \).
		
		\item  If \( \{ U_i \}_{i\in\mathcal{I}}\) is an open cover for an open set \( U \) and \( s_i \in \mathcal{F}(U_i) \) is a section for each \( i \in\mathcal{I}\) such that \( \rho_{U_i, U_i \cap U_j}(s_i) = \rho_{U_j, U_i \cap U_j}(s_j) \) for all \( i, j\in\mathcal{I} \), then there is a unique section \( s \in \mathcal{F}(U) \) such that \( \rho_{U, U_i}(s) = s_i \) for each \( i\in\mathcal{I} \).
	\end{enumerate}

We could argue that the generality of limit spaces demands a less restrictive notion of a fundamental group or groupoid. Considering the alternatives presented by Kennison~\cite{JK}, we can address the following problem: What would be the definition of a sheaf for limit spaces? Could this be used to describe fundamental groups of limit spaces?

\end{enumerate}

\newpage

\

\newpage

\nocite{*} 
\bibliographystyle{plain} 
\bibliography{refTCC.bib} 

\mbox{}

\end{document}